\newtheorem{theorem}{Theorem}[section]
\newtheorem{lemma}[theorem]{Lemma}
\newtheorem{proposition}[theorem]{Proposition}
\theoremstyle{definition}
\newtheorem{definition}[theorem]{Definition}
\newtheorem{hypothesis}{Hypothesis}
\theoremstyle{remark}
\newtheorem{remark}{Remark}[section]
\def\R{{\mathbb R}}
\def\K{{\mathbb K}}
\def\N{{\mathbb N}}
\def\ep{\varepsilon}
\newcommand{\T}{\mathbb{T}}
\newcommand{\Z}{\mathbb{Z}}
\def\supp{\mathop{\rm supp}\nolimits}
\newcommand{\F}{\mathcal{F}}
\newcommand{\ft}{\mathcal{F}}
\def\om{\omega}
\def\norm#1{\|#1\|}
\def\normo#1{\left\|#1\right\|}
\def\norm#1{\|#1\|}
\def\jb#1{\langle#1\rangle}
\def\wh#1{\widehat{#1}}
\def\jb#1{\langle#1\rangle}
\newcommand{\les}{{\lesssim}}
\newcommand{\Sch}{{\mathcal{S}}}
\newcommand{\cro}[1]{\langle #1 \rangle}
\numberwithin{equation}{section}
\newcommand{\EQ}[1]{\begin{equation}\begin{split} #1 \end{split}\end{equation}}
\newcommand{\EQN}[1]{\begin{equation*}\begin{split} #1 \end{split}\end{equation*}}
\begin{document}
\title[KP-I]{On the well-posedness of the KP-I equation }

\subjclass[2010]{Primary: 35A02, 35E15, 35Q53; Secondary: 35B45, 35D30 }
\keywords{KP-I equation, Unconditional uniquess, Well-posedness}

\author{Zihua Guo and Luc Molinet}

\address{Zihua Guo, School of Mathematical Sciences, Monash University, Melbourne, VIC
3800, Australia}
\email{zihua.guo@monash.edu}
\address{Luc Molinet, Universit\'e de Tours, Universit\'e d'Orl\'eans, Parc Grandmont, 37200 Tours, France.}
\email{Luc.Molinet@lmpt.univ-tours.fr}

\date{\today}
\begin{abstract}
We revisit the local well-posedness for the KP-I equation. We obtain unconditional local well-posedness in $H^{s,0}(\R^2)$ for $s>3/4$ and unconditional global well-posedness in the energy space. We also prove the global existence of perturbations with finite energy of non decaying  smooth global solutions.
\end{abstract}
\maketitle
\setcounter{tocdepth}{1}

\tableofcontents

\section{Introduction}\label{section1}

In this paper, we study the Cauchy problem for the Kadomtsev–Petviashvili (KP) equations including KP-I
\begin{eqnarray}\label{eq:kpI}
\begin{cases}
( u_t+u_{xxx}+ u u_x )_x +\epsilon  u_{yy} =0;\\
u(x,y,0)=\phi(x,y),
\end{cases}
\end{eqnarray}
where $u(x,y,t):\R\times \R\times \R \rightarrow \R$ is the unknown function, 
$\phi$ is the given initial data and $\epsilon\in \{-1,1\}$. When $\epsilon = -1$, this equation is known as KP-I equation whereas when $ \epsilon=1$ it is known as the KP-II equation.  The purpose of this paper is  twofold : to prove some new results on the unconditional local wel-posedness (LWP) for KP equation \eqref{eq:kpI} and to give a simplified proof of the global well-posedness (GWP) for the KP-I equation in the energy space. We will also state some new results on the global existence of solutions to the KP-I equation with different behaviors at infinity. This will in particular take into account  perturbations of  (possibly non decaying) smooth global solutions to the KP-I equation. 

The KP equations arise in
physical contexts as models for the propagation of dispersive long
waves with weak transverse effects. They are generalizations to two spatial dimensions of the one-dimensional Korteweg–de Vries (KdV) equation. The KP-I equation models the case with strong surface tension (compared to gravitational forces) while the KP-II equation models the case with weak surface tension. The momentum and energy conservation lead to the following conservation quantities for solutions of the KP equations :
\begin{equation} \label{laws}
M(u) =  \int_{\R^2}\! u^{2}\quad \text{and} \quad 
E(u) =  \frac{1}{2} \int_{\R^2}\! u_x^2-\epsilon\frac{1}{2} \int_{\R^2}\! (\partial_x^{-1} u_y)^2-
\frac{1}{6}
\int_{\R^2}\!  u^3\; .
\end{equation}

The mathematical theory for KP equations was extensively studied since three decades but hereafter  we will focus on the well-posedness theory\footnote{Note that they  are known to be formally integrable and  some attempts to solve them by IST method can be found in \cite{W}, \cite{Zh}.}. Before quoting the different results, it is worth noticing that, due to the presence of $\epsilon $ in the quadratic part of the energy, only the conservation of the $ L^2$-norm may be useful to globalize the solutions of the KP-II equation whereas the energy (or even a higher order conservation law) may be  also useful to globalize the ones of the KP-I equation. 
For the KP-II equation, Bourgain \cite{Bourgain3} established global well-posedness for appropriate data in $L^2$ on both $\R^2$ and $\T^2$. Takaoka \cite{T} and then Takaoka and Tzvetkov \cite{TT} obtained local well-posedness in anisotropic Sobolev space $H^{s_1,s_2}$, which was further improved in \cite{Hadac} to the full subcritical cases, and later on by Hadac, Herr, and Koch \cite{HHK} to the sharp results in the critical space.

For the KP-I equation, according to \cite{MoSaTz1}, Picard iterative approaches for the KP-I equation fail in standard Sobolev space and $H^{s_1,s_2}$ for $s_1,s_2\in \R$ because the solution flow map is not $C^2$ smooth at the origin. In some weighted spaces, such as \cite{CIKS}, this technique remains effective. 
Local well-posedness in $H^{s,0}$ for $s>3/2$ was proved by Molinet, Saut and Tzvetkov \cite{MoSaTz4}.  Global well-posedness in the ``second" energy spaces was obtained in \cite{Kenig1, MoSaTz2}.  Later, Ionescu, Kenig and Tataru \cite{IKT}
proved global well-posedness in the natural energy space $\mathbb{E}^1=\{\phi\in
L^2(\R^2),\partial_x \phi \in L^2(\R^2), \partial_x^{-1}\partial_y \phi \in L^2(\R^2)\}$ by introducing the short-time $X^{s,b}$ space and energy estimate method.
The result of \cite{IKT} was improved and proof was simplified in \cite{Guo} where local well-posedness was proved in $H^{s,0}$ for $s\geq 1$.  For the partially periodic KP-I equation, GWP in the energy space was proved in \cite{Robert}.  

One purpose of this paper is to study the unconditional well-posedness in the anisotropic Sobolev space $H^{s_1,s_2}$
which is defined by
\[
H^{s_1,s_2}=\{\phi \in L^2(\R^2):
\norm{\phi}_{H^{s_1,s_2}}=\norm{\wh{\phi}(\xi,\mu)(1+|\xi|^2)^{s_1/2}(1+|\mu|^2)^{s_2/2}}_{L^2_{\xi,\mu}}<\infty\}.
\]
That is, to obtain uniqueness in $L^\infty_t H^{s_1,s_2}$, without other function spaces. The importance of this type of uniqueness result was  underlined by T. Kato \cite{Kato}. This  plays a crucial role in convergence analysis in the numerical study of PDE's and in the last decade many such  results have been obtained for  dispersive equations (see for instance \cite{GKO}, \cite{MosPil} and references therein).  
In the case of the KP-I equation, another purpose of this paper is to present a simplified proof of the global well-posedness in the energy space. To this aim we will also look at  solutions in the following spaces related to the energy :
$$
E^s(\R^2):=\{\phi \in H^{s,0}(\R^2)\;  \text{with} \; \partial_x^{-1} \phi_y \in H^{s-1,0}(\R^2) \} \; 
$$
equipped with its natural norm.
Now we state our main results:

\begin{theorem}\label{thmmain}\text{ }\\
(1) The KP equation \eqref{eq:kpI} is unconditionally locally well-posed in $H^{s,0}(\R^2)$ for $s>3/4$.\\
(2) The KP-I equation ($\varepsilon=-1$) is unconditionally globally well-posed in $E^s(\R^2) $ for $ s\ge 1$. 
\end{theorem}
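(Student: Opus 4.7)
The plan is to split Theorem~\ref{thmmain} into its existence and uniqueness components. For part (1) I would first establish the key trilinear \emph{a priori} estimate driving an $L^\infty_T H^{s,0}$-energy inequality for smooth solutions, and then derive both halves from it: existence by regularizing the data, solving in the high-regularity class known from \cite{MoSaTz4} and \cite{Guo}, and passing to the limit using the uniform bound plus a Bona--Smith type compactness argument; uniqueness by applying the same estimate to the difference $w=u_1-u_2$ of two $L^\infty_T H^{s,0}$-solutions with the same data. Because no auxiliary resolution space can be used (this is precisely what ``unconditional'' means), the estimate must be proved purely by multilinear analysis at the level of the equation.

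The heart of the matter is therefore the trilinear bound. After a dyadic decomposition $w=\sum_N P_N w$, the energy identity for $P_N w$ produces terms of the form
\[
\int_0^T \!\!\int_{\R^2} P_N w \, \partial_x\bigl(P_{N_1}\tilde u\, P_{N_2} w\bigr)\, dx\,dy\,dt,\qquad \tilde u=\tfrac{1}{2}(u_1+u_2).
\]
I would perform a normal-form reduction using the KP resonance function which, on the hyperplane $\xi_1+\xi_2+\xi_3=0,\ \mu_1+\mu_2+\mu_3=0$, takes the form
\[
\Omega = 3\xi_1\xi_2\xi_3 + \epsilon\,\frac{(\mu_1\xi_2-\mu_2\xi_1)^2}{\xi_1\xi_2\xi_3}.
\]
On the non-resonant region where $|\Omega|$ is comparable to the largest modulation, integration by parts in time against the oscillating phase $e^{it\Omega}$ converts the trilinear integral into boundary quartic terms, controlled by $\|u\|_{L^\infty_T H^{s,0}}^2\|w\|_{L^\infty_T H^{s,0}}^2$, together with an interior quartic term that is treated self-consistently. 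On the resonant region where $|\Omega|$ is small, the identity above forces a strong coupling $(\mu_1\xi_2-\mu_2\xi_1)^2\sim |\xi_1\xi_2\xi_3|^2$ between the $\xi$- and $\mu$-frequencies, which can be exploited directly by Cauchy--Schwarz in the constrained variables.

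The hard part will be closing the summability at the threshold $s=3/4$. The dangerous regimes are the high$\times$high$\to$low interaction and its dual high$\times$low$\to$high: the derivative in $\partial_x(uw)$ costs one $x$-derivative, and since $H^{s,0}$ carries no $y$-regularity, the dispersive smoothing supplied by normal form must exactly compensate both the derivative and the anisotropic $L^2_{x,y}$ orthogonality losses. The index $3/4$ should emerge there as the critical balance. The KP-I sign $\epsilon=-1$ makes this delicate, since the two pieces of $\Omega$ can cancel for either sign of $\xi_1\xi_2\xi_3$, enlarging the resonant region; but the identity still provides the required constraint, and the analysis goes through uniformly in $\epsilon\in\{-1,+1\}$.

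Part~(2) then follows quickly. Since $1>3/4$, part~(1) applies in $H^{1,0}$, and as $E^1\hookrightarrow H^{1,0}$, the blow-up alternative is controlled by the $H^{1,0}$-norm. The conserved quantities in \eqref{laws} give, for $\epsilon=-1$,
\[
\|u_x\|_{L^2}^2+\|\partial_x^{-1}u_y\|_{L^2}^2 = 2E(u)+\tfrac{1}{3}\!\int_{\R^2}\! u^3,
\]
and the KP-type anisotropic Gagliardo--Nirenberg inequality bounds $\int u^3$ by a product of powers of $\|u\|_{L^2}$, $\|u_x\|_{L^2}$ and $\|\partial_x^{-1}u_y\|_{L^2}$ that is strictly subcritical in the energy scaling. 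Combined with conservation of $M(u)$, this yields a uniform-in-time a priori bound on $\|u(t)\|_{E^1}$, hence global existence in $E^1$. Persistence of higher $E^s$-regularity for $s>1$ is then obtained by a standard commutator argument applied to $\partial_x^{s}$ acting on the equation.
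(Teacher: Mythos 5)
Your plan for part (1) has a genuine gap exactly where the KP-I difficulty sits: the resonant region. For $\epsilon=-1$ the two terms of $\Omega=3\xi_1\xi_2\xi_3-\frac{(\mu_1\xi_2-\mu_2\xi_1)^2}{\xi_1\xi_2\xi_3}$ can cancel, so the non-resonant integration by parts in time is unavailable precisely for the dangerous high--low interactions, and the constraint $(\mu_1\xi_2-\mu_2\xi_1)^2\sim(\xi_1\xi_2\xi_3)^2$ does not, by a ``direct Cauchy--Schwarz in the constrained variables'', yield the gain of half a derivative in each dyadic frequency that is needed to compensate the derivative in $\partial_x(uw)$. Extracting that gain is exactly the content of the Ionescu--Kenig--Tataru smoothing estimate, and it requires localization in modulation, i.e.\ Bourgain-space information that is not available from $L^\infty_T H^{s,0}$ bounds alone; moreover, the $C^2$ ill-posedness results of Molinet--Saut--Tzvetkov show that trilinear estimates whose right-hand sides involve only anisotropic Sobolev norms fail, so the resonant contribution cannot be closed by an inequality of the form you describe --- some structure of the solution beyond its $H^{s,0}$ size must be injected. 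The paper injects it by chopping $[0,T]$ into intervals of length $\sim N^{-1}T$ ($N$ the largest $x$-frequency), substituting the Duhamel formula on each piece, applying the IKT trilinear $X^{0,1/2,1}$ estimate there, and complementing this with refined Strichartz bounds on $\|P_{\ll N}u\|_{L^2_I L^\infty_{xy}}$; the threshold $s>3/4$ is produced by this balance (the short-interval Strichartz bound only recovers $N^{-1/4}$), whereas in your sketch the exponent $3/4$ is merely asserted to ``emerge as the critical balance''. Your uniqueness step also needs repair: the difference of two $L^\infty_T H^{s,0}$ solutions cannot be estimated by ``the same estimate'' at level $H^{s,0}$; the paper has to work in the weaker weighted space $\overline{H}^{s-1,0}$ and needs a second refined trilinear estimate adapted to the difference equation.

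For part (2), the conservation laws together with the anisotropic Gagliardo--Nirenberg inequality do give a global-in-time $E^1$ bound, and that portion of your argument is sound. But unconditional global well-posedness in $E^s$, $s\ge 1$, additionally requires propagating the regularity of $\partial_x^{-1}u_y$ in $H^{s-1,0}$, proving its \emph{strong} continuity in time, and establishing continuous dependence in $E^s$. None of this follows from ``a standard commutator argument applied to $\partial_x^{s}$'': the quantity $\vartheta=\partial_x^{-1}u_y$ solves $(\vartheta_t+\vartheta_{xxx}+u\vartheta_x)_x-\vartheta_{yy}=0$, an equation of the same type as the difference equation, so its $H^{s-1,0}$ estimate needs the same refined trilinear machinery (this is the paper's separate a priori estimate on $\partial_x^{-1}u_y$, run with a frequency envelope), and upgrading weak to strong continuity of $\partial_x^{-1}u_y$ uses conservation of the energy along approximating smooth solutions together with time reversibility --- steps your proposal does not address.
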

Our strategy to prove the LWP result is similar to the one introduced in \cite{KoTz} to solve the Benjamin-Ono equation.  We attempt to establish a classical energy estimate on the solutions. For this,  we localize  in $x$-frequency each functions  of the trilinear term that appears in the energy estimate and we decompose the interval of time $[0,T] $ in small intervals of length $T/N $ where $ N $ is the highest $x$-frequency involved. On  each small interval of time, instead of applying the (linear) Strichartz estimates on the Duhamel formula as in \cite{KoTz}, we make use of the famous smoothing trilinear estimate in Bourgain's spaces proved in \cite{IKT}.  Up to our knowlegde, this is the first time  that this strategy is used directly on a trilinear estimate. The derivative loss in the Duhamel term is compensated in part by H\"older inequality in time on small interval and in part by the smoothing effect of the trilinear estimate. Re-summing on the intervals we obtain a trilinear estimate on $ [0,T] $ without any loss in the high space frequencies which is the main ingredient to prove local well-posedness. Note however that to handle the nonlinear term coming from the Duhamel part we actually also make use of the approach of \cite{KoTz}  based on Strichartz estimates  (see also \cite{Kenig1} or \cite{MoSaTz4} in the context of the KP-I equation) to get a priori bounds on some $ L^p_T L^\infty_{xy}$-norms of the solutions. To this aim we  extend the validity of the known Strichartz estimates  associated with the KP equation (see \cite{Saut}). As in \cite{IKT}, the difference of two solutions is then estimated in a Sobolev space with one less regularity in the first space variable and a weight on the  small associated frequencies. 
Finally note that  we were  inspired  by \cite{MoTa} where showing bilinear estimates by the same strategy,  is shown to be efficient to prove  unconditional LWP results for some  generalized KdV-like equation on the torus.\vspace*{2mm}
\begin{remark}\label{Rem31}
Our restriction $ s>3/4$ is linked to the  use of $L^\infty_T H^{s,0}$ as resolution space  that is necessary to establish the unconditional uniqueness. Indeed it can be checked that the result in \cite{Guo}, that used short time $ X^{s,b}$-spaces as resolution space,   can be  improved to get the LWP of the KP-I equation in $ H^{s,0}(\R^2) $ for $s>1/2$\footnote{During the submission process of this work this was written down  in a rigorous way  independently in \cite{Guo2} and  \cite{KSS}.} (see \eqref{nana} for a suitable estimate leading to an a priori estimate on the solutions in short time $ X^{s,b}$-spaces). However, to get an estimate in $L^\infty_T H^{s,0}$ we have to substitute the function
$ u_{N_3}$ in \eqref{nana} by its integral formulation on small intervals of time. The obstruction then comes from the contribution of the high-low interactions in the nonlinear term that is of the form 
$$
A:=\|\tilde{\eta} \int_{c}^t U_\pm (t-\tau) \partial_x P_{N_3} ( u_{\sim N_3} u_{\ll N_3} )(\tau) d\tau 
\|_{X^{0,\frac{1}{2},1}_\pm} 
$$
where $\tilde{\eta}$ is a smooth function that localized on a small interval $ I $ of lentgh $ |I| \sim N^{-1}$  with $ N\ge N_3 $ and $ c\in I$.
By \eqref{linearD2}, Bernstein inequality and Holder inequality in time we get 
$$
A\lesssim N^{-1/2} \|\tilde{\eta}_j\partial_x P_{N_3} ( u_{\sim N_3} u_{\ll N_3})\|_{L^2_{txy}}
\lesssim N_3 N^{-1/2} \|u_{\sim N_3}\|_{L^\infty_{I} L^2_{xy}}\|u_{\ll N_3}\|_{L^2_{I} L^\infty_{xy}}
$$
Applying directly this estimate, without taking into account the smallnes of $ |I| $, would lead to an a priori estimate in $ H^{1+,0}(\R^2) $ that is not suitable for us. 
On the other hand, by using Strichartz estimate in the small interval of time $I$ we are able to recover a  negative power of $ N $ in estimating $\|u_{\ll N_3}\|_{L^2_{I} L^\infty_{xy}}$, assuming that $ u $ belongs to $ L^\infty_T H^{\theta,0} $ for some $ \theta$ to specified. This is the aim of Lemma \ref{lemnew} at the end of Section \ref{Sec3}. However, since we do not control the $ L^\infty_{txy}$-norm of $ u $ we are  not be able to recover $N^{-1/2} $ that would be needed to get an a priori estimate in $H^{\frac{1}{2}+,0}(\R^2) $. Actually the best balance we reached is  Lemma \ref{lemnew} that ensures that 
$$
\|u_{\ll N_3}\|_{L^2_{I} L^\infty_{xy}} \lesssim N^{-1/4} C(\|u\|_{L^\infty_T H^{\frac{3}{4}+,0}})
$$
This leads to the a priori estimate in $H^{s,0}(\R^2) $ for $ s>3/4$. 
\end{remark}

In the last section we will look at solutions with different behavior at infinity. Indeed, it turns out that our approach may be easily adapted to such framework. The study of the existence of such solutions goes back to Zhidkov \cite{Zhidkov} in the KdV or NLS context. Improvements on the regularity assumptions on initial data for KdV-type equations can be found for instance in \cite{Ga}, \cite{P21} or \cite{Laurens}. In the context of the KP equations, results in this direction mainly concern some perturbations of  the line-soliton for which global existence is proven in \cite{MoSaTz5} for the KP-II equation and in \cite{MoSaTz4} for the KP-I equation. 

To tackle the study of possibly non decaying solutions, for a given bounded function $ \psi \;: \;[0,T] \times \R^2\to \R $,  we will look for solution of the KP equation of the form 
$ u=\psi+v $ where $v $ is a solution of the following Cauchy problem :
\begin{eqnarray}\label{KPnondecay}
\begin{cases}
( v_t+v_{xxx}+ v v_x +(v \psi)_x+g )_x  +\epsilon v_{yy} =0;\\
v(x,y,0)=\phi(x,y),
\end{cases}
\end{eqnarray}
where $ g=g(\psi) $ is a function decaying at infinity so that 
$$
g_x:=(\psi_t + \psi_{3x} + \psi \psi_x)_x +\epsilon \psi_{yy}\; . 
$$
Our first LWP result on \eqref{KPnondecay} is established under the following hypothesis on $ \psi $ (see the beginning of Section \ref{Sect2} for the definition of $ a+$ with $ a\in \R $ and of $J_x^\theta $ with $ \theta\in\R$).
\begin{hypothesis}\label{hyp1} 
$$
J_x^{s+1+} \psi  \in L^{\infty}_{loc} (L^\infty(\R^2)) \quad \text{and} \quad 
(\psi_t + \psi_{3x} + \psi \psi_x)_x +\epsilon \psi_{yy}=g_x \quad \text{with} \quad g  \in C(\R_+; H^{s,0}(\R^2)) \; .
$$
\end{hypothesis}
\begin{remark}
Note that this hypothesis is fulfilled  for instance when $\psi $ is a smooth (possibly non decaying) solution of the KP equation. For instance $\psi$ is a local solution of the KP equation in $ H^{s+2+}(\T^2) $,  $H^{s+2+}(\R\times \T)  $ or $H^{s+2+}(\T \times \R)$. One can also think of other type of solutions as the multi-line solitons of the KP-II equation that seem to be physically relevant (\cite{Kodama}, \cite{Kodama2}) or the genus N solutions (\cite{Krichever}). For a nice brief presentation of all these  exact solutions of the KP-II equation, we refer to  \cite{KS}. Finally we notice that we can also take any function $\psi =\Psi_x $ with $ \Psi \in C^1(\R_+; H^\infty(\R^2)) $.
\end{remark}
\begin{theorem}\label{thmmain2}
Let $ s>3/4 $ and $ \psi \, : \, \R^2\to \R $ satisfying Hypothesis \ref{hyp1}.
Then, for any $ v_0\in H^{s,0}(\R^2)$,  there exist $ T=T(\|v_0\|_{H^{\frac{3}{4}+,0}})>0 $  and a solution 
$v\in C([0,T]; H^{s,0}(\R^2)) $ 
 to \eqref{KPnondecay}. This solution is unique in the class $ L^\infty(0,T;H^{s,0}(\R^2)) $. \\
Moreover, for any $ R>0 $, the solution map
$$
{\mathcal G} \, :\, 
\begin{array}{rcl}
  B(0,R)_{H^{s,0}}  & \to & C([0,T(R)];H^{s,0}(\R^2)) \\
v_0 &\mapsto &v
\end{array} 
$$
  is continuous.
  \end{theorem}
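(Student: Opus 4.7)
The plan is to adapt the proof of Theorem~\ref{thmmain}(1) to the perturbed Cauchy problem \eqref{KPnondecay}. The principal nonlinearity $vv_x$ is unchanged, so the short-time frequency-localized trilinear smoothing estimate of \cite{IKT} that drives Theorem~\ref{thmmain}(1), together with the extended Strichartz estimates discussed in the introduction, carries over verbatim. The new ingredients --- the bilinear term $(v\psi)_x$ and the source $g$ --- will act as lower-order perturbations thanks to Hypothesis~\ref{hyp1}.

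The core step is the a priori $H^{s,0}$ estimate for a smooth solution $v$ of \eqref{KPnondecay}. I would apply $J_x^s$ to the equation, pair with $J_x^s v$ in $L^2_{xy}$, and integrate in $t$. The $vv_x$ contribution is bounded exactly as in Theorem~\ref{thmmain}(1) by localizing each factor in $x$-frequency, slicing $[0,T]$ into subintervals of length $\sim N^{-1}$ (with $N$ the top frequency), applying IKT's smoothing trilinear estimate on each slice, and resumming. The new bilinear term $\int J_x^s v\cdot J_x^s(v\psi)_x\,dxdy$ I would treat in Fourier: after an integration by parts and symmetrization in $(\xi_1,\xi_2)$ with $\xi_3=\xi_1+\xi_2$ denoting the $\psi$-frequency, the multiplier becomes $\xi_1\jb{\xi_1}^{2s}+\xi_2\jb{\xi_2}^{2s}$; since $\xi\mapsto\xi\jb{\xi}^{2s}$ is odd, a mean-value argument gives
\[
\bigl|\xi_1\jb{\xi_1}^{2s}+\xi_2\jb{\xi_2}^{2s}\bigr|\lesssim |\xi_3|\bigl(\max(\jb{\xi_1},\jb{\xi_2})\bigr)^{2s}\lesssim \jb{\xi_3}^{s+1+}\jb{\xi_1}^s\jb{\xi_2}^s,
\]
and Plancherel yields the bound $\lesssim\|J_x^{s+1+}\psi\|_{L^\infty_{xy}}\|v\|_{H^{s,0}}^2$, which is exactly the quantity provided by Hypothesis~\ref{hyp1}. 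The source contribution is $\lesssim\|g\|_{H^{s,0}}\|v\|_{H^{s,0}}$ by Cauchy--Schwarz. Gronwall then closes the estimate on an interval $[0,T]$ whose length depends on $\|v_0\|_{H^{3/4+,0}}$ and on local norms of $\psi$ and $g$.

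Existence of a solution in $C([0,T];H^{s,0})$ would follow by regularizing the initial data (e.g.\ $v_0^n=P_{\le n}v_0$), producing smooth solutions of \eqref{KPnondecay} via the classical high-regularity LWP of \cite{MoSaTz4} --- the extra term $(v\psi)_x+g$ is trivially absorbed in a Picard scheme at high regularity --- and passing to the weak-$\star$ limit using the uniform $H^{s,0}$ bound together with local compactness from the equation. For unconditional uniqueness in $L^\infty(0,T;H^{s,0})$, I would follow the strategy of \cite{IKT}: estimate the difference $w=v_1-v_2$ of two solutions in a space with one fewer $x$-derivative and a low-frequency weight. The extra term $(w\psi)_x$ is controlled by the same Fourier-symmetrization argument at reduced regularity, and the ``$+$'' margin in Hypothesis~\ref{hyp1} is precisely what makes this work. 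Continuity of the flow $v_0\mapsto v$ then follows from a standard Bona--Smith argument combining the a priori bound with the weaker-space difference estimate.

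The main obstacle is the bilinear term $(v\psi)_x$. A direct Kato--Ponce commutator estimate loses one $x$-derivative on $v$ when $s<1$, and straightforward product estimates are unavailable because $\psi\notin L^2$. The Fourier symmetrization described above is the resolution: it transfers the missing derivative onto $\psi$'s frequency $\xi_3$, which is exactly why Hypothesis~\ref{hyp1} demands $J_x^{s+1+}\psi\in L^\infty$ (one derivative for $\partial_x$ and $s$ for matching the Sobolev weight) rather than only $J_x^{1+}\psi\in L^\infty$.
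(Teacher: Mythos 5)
Your Fourier--symmetrization treatment of the outer energy pairing $\int J_x^s v\,J_x^s(\psi v)_x$ is essentially the paper's commutator estimate (low-frequency $\psi$ handled through the commutator with $\psi_x\in L^\infty$, high-frequency $\psi$ summed thanks to the $+$ in $J_x^{s+1+}\psi\in L^\infty$), and that part is fine. The genuine gap is your claim that the short-time trilinear/Strichartz machinery of Theorem \ref{thmmain}(1) ``carries over verbatim''. It does not: in that scheme each frequency-localized factor is replaced, on time intervals of length $\sim N^{-1}$, by its Duhamel representation, and the forcing entering there is the \emph{full} nonlinearity of the equation, which for \eqref{KPnondecay} now contains $(\psi v)_x$ and $g$. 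The same is true of the auxiliary bounds that feed the energy estimate: the controls on $\|v\|_{L^2_TL^\infty_{xy}}$, on $\|D_x^{s-(\frac12+)}v\|_{L^4_TL^4_{xy}}$, and the small-interval bound of Lemma \ref{lemnew} are all derived from the equation through the refined Strichartz estimates of Section \ref{Sec3}. Consequently the perturbation enters Proposition \ref{pro1}, Lemma \ref{lemnew} and Propositions \ref{prop_tri}--\ref{propro2} themselves, and these estimates must be re-derived with the additional contributions; this is exactly what the paper does (see \eqref{ImprovedStri1new}, \eqref{ImprovedStri2new}, \eqref{ImprovedStri5new} and the supplementary terms \eqref{95}--\eqref{96}), and it is where the real work for Theorem \ref{thmmain2} lies. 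Your proposal addresses none of these contributions, so the a priori estimate and the difference estimate are not actually established.

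Two further steps would fail as written. First, the estimate cannot be closed by Gronwall: no power of $T$ survives in front of the main trilinear contribution (this is why Section \ref{sect7} uses the dilation argument), so smallness must come from rescaling, and for \eqref{KPnondecay} one must check that $\psi_\lambda$ and $g_\lambda$ scale with \emph{positive} powers of $\lambda$ so that Hypothesis \ref{hyp1} is preserved with improved constants (see \eqref{rescale2}); you never verify this, and your claimed time of existence depending on ``local norms of $\psi$ and $g$'' reflects this misidentification of the closure mechanism. Second, there is no ``Picard scheme at high regularity'' for KP-I: the flow map is not $C^2$ in any $H^{s_1,s_2}$ (\cite{MoSaTz1}), and under Hypothesis \ref{hyp1} $\psi$ is merely $J_x^{s+1+}\psi\in L^\infty$, not smooth, so one cannot simply quote \cite{MoSaTz4} for smooth solutions of the perturbed problem. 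The paper instead truncates the $x$-frequencies of $\psi$, proves LWP for $s>2$ by a compactness/energy method for the truncated problem, and passes to the limit in the truncation; some substitute for this step is needed before your regularization-of-data argument can start.
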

  \begin{remark}
On account of the available LWP results on the KP equations this proves the LWP of the KP equations on $H^{s+2+}(\K) + H^{s,0}(\R^2) $, $s>3/4$,  for $ \K\in\{\T^2,\R\times\T, \T\times\R\} $.
\end{remark}

  For the KP-I equation, we also obtain a global existence result for finite energy perturbations of functions satisfying the following more restrictive hypothesis 
  \begin{hypothesis}\label{hyp2} 
$$
J_x^{s+1+} \psi  \in L^{\infty}_{loc} (L^\infty (\R^2)) , \; J_x^{(s-1)+} \psi_y   \in L^{\infty}_{loc} (L^\infty (\R^2)) 
$$
and  
$$
(\psi_t + \psi_{3x} + \psi \psi_x)_x - \psi_{yy}=g_x \quad \text{with} \quad  g   \in C(\R_+, E^{s}(\R^2)) \; .
$$
\end{hypothesis}

\begin{remark}
Note that again  this hypothesis is fulfilled    when $\psi $ is a smooth(possibly non decaying) global solution of the KP-I equation.  This is also the case when  $\psi =\Psi_{xx} $ with $ \Psi \in C^1(\R_+; H^\infty(\R^2)) $.
\end{remark}    

\begin{theorem}\label{thmmain3}
Let $ s\ge 1 $ and assume  that $ \psi \, : \, \R_+\times\R^2 \to \R $ satisfies 
Hypothesis \ref{hyp2}.
Then, for any $ v_0\in E^{s}(\R^2)$,  the solution to \eqref{KPnondecay}, with $ \varepsilon=-1$, constructed in Theorem \ref{thmmain2} exists for all positive times  and belongs to  
$ C(\R_+; E^{s}(\R^2)) $. 
Moreover, for any $ T>0 $, the solution map
$$
{\mathcal G} \, :\, 
\begin{array}{rcl}
  E^{s}  & \to & C([0,T];E^s(\R^2)) \\
v_0 &\mapsto &v
\end{array} 
$$
  is continuous.
\end{theorem}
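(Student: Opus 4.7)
The plan is to complement the local result of Theorem~\ref{thmmain2} with a priori bounds in $E^s(\R^2)$ on any fixed time interval, and then to extend the local solution by iteration. Since the lifespan given by Theorem~\ref{thmmain2} depends only on $\|v_0\|_{H^{3/4+,0}}$ and $E^s \hookrightarrow H^{1,0} \hookrightarrow H^{3/4+,0}$ for $s\ge 1$, it suffices to bound $\|v(t)\|_{E^s}$ on every interval $[0,T]$ where the solution is known to exist, in terms of $T$, $\|v_0\|_{E^s}$ and the quantities controlled by Hypothesis~\ref{hyp2}; a standard continuation argument then produces a global-in-time solution.

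For the case $s=1$ I would work first with smooth decaying solutions, the general case being recovered by the approximation argument below. Multiplying \eqref{KPnondecay} by $v$ and integrating kills the dispersive and self-convective contributions and leaves
\EQN{
\frac{d}{dt}\|v(t)\|_{L^2}^2 \lesssim \|\psi_x(t)\|_{L^\infty}\|v(t)\|_{L^2}^2 + \|g(t)\|_{L^2}\|v(t)\|_{L^2}.
}
Next I would differentiate the unperturbed KP-I energy $E(v)=\tfrac12\|v_x\|_{L^2}^2+\tfrac12\|\partial_x^{-1}v_y\|_{L^2}^2-\tfrac16\int v^3$ along the flow. Since $E$ is a conservation law for the unperturbed equation, every contribution not involving $\psi$ or $g$ cancels, leaving integrals in which $(v\psi)_x+g$ is tested against $v_{xx}$, $\partial_x^{-2}v_{yy}$ and $v^2$. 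Repeated integration by parts (moving at most two $x$-derivatives away from $v$, and using $\partial_y\partial_x^{-1}v_y=\partial_x^{-1}v_{yy}$ to reduce the $y$-derivatives) bounds all resulting terms by $\|v\|_{L^2}$, $\|v_x\|_{L^2}$, $\|\partial_x^{-1}v_y\|_{L^2}$, $\|g\|_{E^1}$ and the $L^\infty$-norms of $\psi_x$, $\psi_{xx}$, $\psi_y$, each of which is supplied by Hypothesis~\ref{hyp2} at $s=1$. Together with the anisotropic Gagliardo--Nirenberg inequality $\int v^3 \lesssim \|v\|_{L^2}^{3/2}\|v_x\|_{L^2}^{3/4}\|\partial_x^{-1}v_y\|_{L^2}^{3/4}$ used to absorb the cubic term inside $E$, Gronwall's lemma gives an a priori bound on $\|v(t)\|_{E^1}$ on every finite interval.

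For $s>1$ I would then propagate this bound to higher regularity. Applying $J_x^s$ to the equation obtained by inverting $\partial_x$ in \eqref{KPnondecay}, and testing against $J_x^s v$, the main difficulty is to handle the commutators of $J_x^s$ with multiplication by $v$ and by $\psi$, which is done by Kato--Ponce type estimates in the $x$-variable using the bounds on $J_x^{s+1+}\psi$ furnished by Hypothesis~\ref{hyp2}, together with the already established $E^1$ a priori bound to absorb the $v$-dependent factors. The $\partial_x^{-1}\partial_y$ component is propagated in parallel by applying $J_x^{s-1}\partial_x^{-1}\partial_y$ to the equation and testing against itself, now also using the $L^\infty$-bound on $J_x^{(s-1)+}\psi_y$. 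This closes a Gronwall estimate on $\|v(t)\|_{E^s}$ on every finite interval, and iteration of Theorem~\ref{thmmain2} produces the global solution in $C(\R_+;E^s(\R^2))$.

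The main obstacle, as in the classical KP-I analysis, lies in the cubic term of $E(v)$: not being sign-definite, it forces the use of an anisotropic Sobolev embedding and in particular of the $\partial_x^{-1}v_y$ component of the $E^1$-norm in order to close the estimate. The continuity of the flow map on $E^s$ is then obtained via a Bona--Smith regularization procedure: approximate $v_0$ by smoother data $v_0^\delta$, use the above energy estimates uniformly in $\delta$ to control differences in $E^s$, and pass to the limit using the continuity in $H^{s,0}$ already provided by Theorem~\ref{thmmain2}.
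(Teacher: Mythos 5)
Your overall globalization strategy (an $L^2$ estimate, an almost-conservation estimate for the KP-I energy of $v$ with only the terms coming from $(\psi v)_x$ and $g$ surviving, the anisotropic Sobolev inequality to handle the cubic term, Gronwall, and iteration of the local theory whose lifespan depends only on $\|v_0\|_{H^{\frac34+,0}}$) coincides with the paper's treatment of the $E^1$ bound. The gap is in how you propose to propagate the remaining part of the $E^s$-norm, namely $\partial_x^{-1}v_y\in H^{s-1,0}$ (and, for $s>1$, the $H^{s,0}$ regularity of $v$ beyond what Theorem \ref{thmmain2} already gives). You propose a classical energy method: apply $J_x^{s}$ (resp.\ $J_x^{s-1}\partial_x^{-1}\partial_y$) to the equation, test against itself, and close with Kato--Ponce commutator estimates using the $L^\infty$ bounds on $J_x^{s+1+}\psi$ and $J_x^{(s-1)+}\psi_y$. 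This does not close at the stated regularity. The function $\vartheta=\partial_x^{-1}v_y$ solves $(\vartheta_t+\vartheta_{xxx}+v\vartheta_x+\psi\vartheta_x+\psi_y v+\partial_x^{-1}g_y)_x-\vartheta_{yy}=0$, and the quasilinear term $v\vartheta_x$ produces, after symmetrization, terms of the type $\int v_x\,|J_x^{s-1}\vartheta|^2$, so that a Gronwall closure requires $\|v_x\|_{L^1_TL^\infty_{xy}}$ (or a comparable bound). At $s\ge1$ this is not controlled by $v\in L^\infty_TE^s$: even the refined Strichartz estimate \eqref{ImprovedStri2} only yields $J_x^{s-(\frac23+)}v\in L^{2+}_TL^\infty_{xy}$, far from one full derivative in $L^\infty$. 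This derivative loss is exactly why the classical energy method for KP-I stops at $s>3/2$, and why the paper instead runs the short-time/trilinear machinery of Propositions \ref{propro2} and \ref{pro8} (suitably modified in Section 9 to absorb the supplementary terms $\psi\vartheta_x$, $\psi_y v$ and $\partial_x^{-1}g_y$ under Hypothesis \ref{hyp2}) to obtain the a priori bound \eqref{equy} for $\partial_x^{-1}v_y$ in $H^{s-1,0}_\om$. Without an ingredient of this type your higher-order Gronwall estimate simply cannot be closed.

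A second, related issue is the continuity of the flow map in $E^s$. A Bona--Smith scheme needs difference estimates, and at this low regularity the only available ones (Proposition \ref{prodif} and its perturbed analogue, again built on the refined trilinear estimates) hold in the weaker norm $\overline H^{s-1,0}$, not in $E^s$; the flow is not known to be Lipschitz at top regularity. The paper circumvents this by combining frequency-envelope versions of the a priori estimates with the continuity of $t\mapsto E(v(t))$ (replacing conservation of energy) and weak-to-strong continuity arguments, rather than by smoothing the data and estimating differences in $E^s$ directly. So both the propagation of the $\partial_x^{-1}\partial_y$ component and the continuity statement require the refined short-time estimates that your proposal replaces by classical commutator bounds.
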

\begin{remark}
This theorem leads to the global existence of finite energy perturbations of any global smooth solution to the KP-I equation on  $\R\times\T $, $ \T\times\T $ or $ \T\times\R $. As special cases, one can think of the line-soliton (KdV-soliton) : $u=\varphi_c(x-ct) $ where 
$$
\varphi_c(x,y)=\frac{3c}{2}\,{\cosh}^{-2}\Big(\frac{\sqrt{c}\, x}{2}\Big)
$$
or the Zaitsev  traveling wave (\cite{Z}) :$ u(t,x,y)=\psi_c(x-ct) $ where $\psi_c$
is localized in $x$ and periodic in $y$ :
\begin{equation}\label{1.3}
\psi_c(x,y)=2\alpha^{2}\kappa 
\frac{1-k^{-1/2}\cosh(\alpha x-ct)\cos(\delta y)}
{k(\cosh(\alpha x -c t)-\cos\delta y)^2},
\end{equation}
with
$$
\kappa=\frac{\delta^2}{\delta^2-\alpha^4},\quad
c=\alpha^3+\frac{3\delta^2}{\alpha},\quad \delta^2>\alpha^4,\quad
\alpha,\delta\in\R^{\star}\, .
$$
Note that previous results on the global existence of perturbations to the line-soliton  or Zaitsev traveling wave for the KP-I equation were restricted to perturbations at the level of the next conservation law (see \cite{MoSaTz4}) that verifies in particular the additional constraint $ \partial_{x}^{-2} v_{0,yy} \in L^2(\R^2)$.
\end{remark}    

\noindent

\section{Preliminary}\label{Sect2}

For $x, y\in \R$, $x\les y$ means that there exists $C>0$ such
that $x\leq Cy$. By $x\sim y$ we mean $x\les y$ and $y\les x$. 
For $p\in [1,\infty]$, we use $p'$ to denote the conjugate index, namely $\frac{1}{p}+\frac{1}{p'}=1$. 
For $a\in \R$, $a+$ denotes $a+\epsilon$ for any sufficiently small $\epsilon>0$, and similarly for $a-$.

For
$f\in \Sch'$ we denote by $\widehat{f}$ or $\ft (f)$ the Fourier
transform of $f$ for both spatial and time variables,
\begin{eqnarray*}
\widehat{f}(\xi,\mu,\tau)=\int_{\R^3}e^{-ix \xi}e^{-iy\mu}e^{-it
\tau}f(x,y,t)dxdydt.
\end{eqnarray*}
Moreover, we use $\ft_{x,y}$ and $\ft_t$ to denote the Fourier
transform with respect to space and time variable respectively. 

For $\theta\in\R $ we denote by $ J_x^\theta$ the Bessel potential of order $ -\theta$ with respect to the $x$-variable, i.e.
$$
J_x^\theta \varphi =\ft_{x,y}^{-1}((1+\xi^2)^{\theta/2}\ft_{x,y}(\varphi)) \;, \; \forall \varphi\in L^2(\R^2)\;.
$$

Fix a function $\eta: \R \rightarrow [0, 1]$ such that it is radial, smooth, 
supported in $\{|\xi|\leq 8/5\}$ and equals to $1$ for $|\xi|\leq 5/4$. Let $\phi(\xi)=\eta(|\xi|/2)-\eta(|\xi|)$. For $N\in 2^\Z$ let $\phi_N(\xi)=\phi(\xi/N)$, and define $P_N$ by
$\widehat{P_Nu}(\xi)=\phi_{N}(\xi)\widehat{u}(\xi,\mu)$. We then define in a standard way 
$$
P_{\le N} =\sum_{0<N_1\le N} P_{N_1} \quad \text{and} \quad P_{\ge N} =\sum_{N_1\ge N} P_{N_1}
$$
For $(\xi,\mu)\in \R \setminus \{0\}\times \R$ let
\begin{equation}\label{eq:dr}
\omega_\pm(\xi,\mu)=\xi^3\mp\mu^2/\xi
\end{equation}
and  write $\sigma_\pm (\tau,\xi,\mu)=\tau-\omega_\pm(\xi,\mu)$. Here $\omega_-=\xi^3+\mu^2/\xi$ is the dispersion for KP-I  whereas $\omega_+$ is the dispersion  for KP-II.

We define the Fourier multiplier $ Q_L^{\pm} $, $L\in 2^{\N} $, associated to the KP  group by 
\begin{align*}
\widehat{Q_L^\pm u}(\tau,\xi,\mu) =\phi_L( \sigma_\pm (\tau,\xi,\mu) ) \hat{u}(\tau,\xi,\mu) \quad \text{for } N\ge 2;\\
\quad \text{and}  \; \widehat{Q_1^\pm u}(\tau,\xi,\mu) =\Bigl(\sum_{0<L\le 1}\phi_L(\sigma_\pm (\tau,\xi,\mu) )\Bigr) \hat{u}(\tau,\xi,\mu).
\end{align*}
Let $U_\pm(t)\phi\in C(\R: L^2)$ denote the solution of the free KP  evolution given by
\begin{eqnarray*}
\ft_{x,y}[U_\pm(t)\phi](\xi,\mu,t)=e^{it\omega_\pm(\xi,\mu)}\widehat{\phi}(\xi,\mu).
\end{eqnarray*}
We will need to use  Bourgain spaces as intermediate spaces. Let us introduce 
$$\|u\|^2_{X^{0,1/2}_\pm}=
 \int _{\R_{\xi,\mu}^2\times\R_{\tau}}
 \langle \sigma_\pm (\tau,\xi,\mu )\rangle  |\hat{u}(\tau, \xi,\eta)|^2d\tau d\xi d\eta,
$$
and the following Besov version
\begin{equation}\label{deftX}
\|u\|_{X^{0,1/2,1}_\pm}=
\sum_{L\ge 1} \|Q_L u \|_{X^{0,1/2}} \; .
\end{equation}
In this paper, we  use the frequency envelope method (see for instance \cite{Tao04}  and \cite{KoTz}) in order to show the continuity result with respect to initial data.
To this aim, we first introduce the following:
\begin{definition}\label{deftenv}
  Let $\delta>1$.
  An {\it acceptable frequency weight} $\{\om_N^{(\delta)}\}_{N\in 2^\Z}$ is defined as a dyadic sequence satisfying $\om_N=1 $ for $ N< 1 $ and $  \om_N\le \om_{2N}\le \delta \om_N$ for $N\ge 1$.
  We simply write $\{\om_N\}$ when there is no confusion.
\end{definition}
With an acceptable frequency weight $\{\om_N\}$, we slightly modulate the classical Sobolev spaces in the following way:
for $s\ge0$, we define $H_\om^{s,0}(\R)$ with the norm
$$
  \|u\|_{H^{s,0}_\om}
  :=\bigg(\sum_{N\in 2^{\Z}}\om_N^2 \langle N\rangle^{2s}\|P_N u\|_{L^2}^2\bigg)^{\frac 12}.
$$
Note that $H_\om^{s,0}(\R^2)=H^{s,0}(\R^2)$ when we choose $\om_N\equiv 1$.

Next we collect some needed estimates. The main ingredients are the method developed by Koch and Tzvetkov (\cite{KoTz}) and the Strichartz estimates. 

\begin{definition} We say a pair $(q,r)$ is {\it admissible} if it satisfies the following conditions:
\begin{itemize}
    \item $2\leq q,r\leq \infty$;
    \item $\frac{1}{2}(\frac{1}{2}-\frac{1}{r})\leq \frac{1}{q}\leq \frac{1}{2}-\frac{1}{r}$;
    \item $(q,r)\notin \{(2,\infty), (4,\infty)\}$. 
\end{itemize}
\end{definition}

\begin{lemma} [Strichartz estimates]\label{Strichartz}
Let $T\in (0, \infty]$. Assume $(q,r), (\tilde q, \tilde r)$ are both admissible pairs. Then we have
\begin{equation}\label{hom}
\|D_x^{-\beta(q,r)} U_\pm(t)\phi  \|_{L^{q}_{T}L^{r}_{xy}}
\lesssim  \|\phi\|_{L^2},
\end{equation}
and
\begin{equation}\label{non-hom}
\left\|\int_{0}^{t}D_x^{-\beta(q,r)-\beta(\tilde q,\tilde r)}U_\pm(t-t')f(t')dt'\right\|_{L^{q}_{T}L^{r}_{xy}}
\lesssim \|f\|_{L^{{\tilde q'}}_{T}L^{{\tilde r'}}_{xy}},
\end{equation}
where $\beta(q,r)=3(\frac{1}{2}-\frac{1}{r}-\frac{1}{q})$.
\end{lemma}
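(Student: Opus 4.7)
The plan is to reduce both estimates to a uniform dispersive bound on dyadic $x$-frequency blocks via the $TT^*$ method, and then upgrade to the retarded inhomogeneous form using the Christ--Kiselev lemma. I first observe that the parabolic rescaling $(x, y, t) \mapsto (N^{-1}x, N^{-2}y, N^{-3}t)$ is consistent with the KP dispersion relation $\omega_\pm$ and absorbs the operator $D_x^{-\beta(q,r)}$ into the natural scaling factor of the $L^2$-norm on the right-hand side (since $\beta(q,r) = 3(\tfrac12 - \tfrac1q - \tfrac1r)$ is exactly the scaling exponent). This reduces both estimates, block by block in the Littlewood--Paley decomposition, to the case where the $x$-frequency is localized at $\sim 1$.

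The analytic core is the dispersive bound $|K_1(X, Y, t)| \lesssim |t|^{-1}$, where
$$K_1(X, Y, t) = \int_{\R^2} \phi_1(\xi)^2 \, e^{iX\xi + iY\mu + it\xi^3 \mp it\mu^2/\xi} \, d\xi\, d\mu$$
is the Schwartz kernel of $P_1 U_\pm(t) P_1$. I would first carry out the Gaussian $\mu$-integration explicitly (the phase is exactly quadratic in $\mu$), producing the factor $c\,|\xi/t|^{1/2}$ and a cubic oscillatory integral in $\xi$,
$$\int_{\R} \phi_1(\xi)^2 \,(\xi/t)^{1/2}\, e^{it\xi^3 + i(X + Y^2/(4t))\xi}\, d\xi,$$
to which van der Corput's lemma applies: the second $\xi$-derivative of the phase is of order $|t|$ on $|\xi| \sim 1$, delivering the extra $|t|^{-1/2}$ factor and the net bound $|K_1(X, Y, t)| \lesssim |t|^{-1}$. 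Interpolating this $L^1 \to L^\infty$ decay with the trivial $L^2 \to L^2$ bound (Plancherel), convolution against $K_1(\cdot, t)$ sends $L^{r'}_{xy} \to L^{r}_{xy}$ with norm $\lesssim |t|^{-(1 - 2/r)}$ for $r \in [2, \infty]$.

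From here \eqref{hom} on the upper admissibility line $1/q + 1/r = 1/2$, $r \in (2, \infty)$, follows by $TT^*$ combined with the Hardy--Littlewood--Sobolev inequality in time, which requires the exponent $1 - 2/r$ to lie in $(0, 1)$; this restriction is exactly what forces the exclusion of the endpoint pairs $(2, \infty)$ and $(4, \infty)$. To cover the full admissible region, I interpolate the upper-line estimates with the trivial $L^\infty_t L^2_{xy}$ bound at one corner, and with the lower (Airy-scaling) line $2/q + 1/r = 1/2$ at the other; the latter estimates come from the van der Corput gain in $x$ alone combined with a crude $L^2_\mu$ use in $y$. Since real interpolation is consistent with the formula $\beta(q,r) = 3(\tfrac12 - \tfrac1q - \tfrac1r)$, undoing the parabolic scaling on each dyadic piece gives
$$\|P_N U_\pm(t)\phi\|_{L^q_T L^r_{xy}} \lesssim N^{\beta(q,r)}\, \|P_N\phi\|_{L^2},$$
and the Littlewood--Paley square function characterization of $L^r$ for $r \in [2, \infty)$ yields \eqref{hom}.

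For the inhomogeneous estimate \eqref{non-hom}, I pair \eqref{hom} for $(q, r)$ with its dual for $(\tilde q, \tilde r)$ to produce the non-retarded version
$$\left\|\int_{\R} D_x^{-\beta(q, r) - \beta(\tilde q, \tilde r)} U_\pm(t - t') f(t')\, dt'\right\|_{L^q_T L^r_{xy}} \lesssim \|f\|_{L^{\tilde q'}_T L^{\tilde r'}_{xy}}.$$
The Christ--Kiselev lemma then replaces the full time integral $\int_{\R}$ by the retarded one $\int_0^t$ provided $q > \tilde q'$; the diagonal case $q = \tilde q'$ reduces to the $TT^*$ identity and requires no further input. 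The hardest part throughout is keeping the $\beta(q,r)$ loss sharp under both the parabolic scaling and the real interpolation step that populates the interior of the admissibility region, and rigorously identifying where the excluded endpoints obstruct the naive $TT^*$ argument.
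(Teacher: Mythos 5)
Your architecture for the pairs with $r<\infty$ is sound and essentially parallels the paper's proof: the parabolic rescaling correctly converts a unit-frequency estimate into the localized bound $\|P_NU_\pm(t)\phi\|_{L^q_TL^r_{xy}}\lesssim N^{\beta(q,r)}\|\phi\|_{L^2}$ (the paper simply works at frequency $N$ directly), the Gaussian $\mu$-integration plus van der Corput in $\xi$ gives the $|t|^{-1}$ kernel bound, and $TT^*$ with fractional integration in time, square-function summation, and Christ--Kiselev for \eqref{non-hom} all match. The genuine gap is the admissible pairs with $r=\infty$ and $2<q<4$, which are exactly the ones the rest of the paper relies on (e.g.\ \eqref{est2}, Lemma \ref{lemnew}, and the repeated choice $(\tilde q,\tilde r)=(2+,\infty)$). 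Your plan to fill the interior of the admissible region by interpolating Strichartz estimates between the two boundary lines cannot reach these pairs: $1/r$ interpolates affinely, so an interpolant has $1/r=0$ only if both endpoint estimates already have $r=\infty$, and the only $r=\infty$ points on your two lines are precisely the excluded pairs $(2,\infty)$ and $(4,\infty)$. The paper avoids this by interpolating at the level of the kernel, not the Strichartz norms: it proves the one-parameter family $|G_\pm(x,y,t)|\lesssim |t|^{-\frac12-\theta}N^{\frac32-3\theta}$ for $\theta\in[0,1/2]$ (trivial bound on the $\xi$-integral at $\theta=0$, van der Corput at $\theta=1/2$, interpolation in between) and runs $TT^*$ plus Hardy--Littlewood--Sobolev directly at every interior point, including $r=\infty$ with $\frac1q=(\frac12+\theta)\frac12$. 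If you keep your scheme, you must interpolate the dispersive estimates rather than the Strichartz estimates.

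Even granting the frequency-localized bounds, your summation over $N$ is only justified for $r<\infty$, as you yourself note; for $r=\infty$ the triangle inequality gives $\sum_N\|P_N\phi\|_{L^2}$, which is not controlled by $\|\phi\|_{L^2}$, so something must replace the square function. The paper devotes a specific argument to this: $(\theta,1)$ real interpolation between the $\ell^2_N$-weighted estimates at exponents $q+$ and $q-$ (Theorems 5.6.2 and 5.2.1 of \cite{Inter}) yields an $\ell^1_N$-weighted estimate at exponent $q$, which does sum; alternatively one can bypass Littlewood--Paley altogether using the frequency-global dispersive bound \eqref{jhg}, $\|D_x^{-\varepsilon}U_\pm(t)\phi\|_{L^\infty_{xy}}\lesssim |t|^{-1+\varepsilon/3}\|\phi\|_{L^1}$ for $0\le\varepsilon<3/2$, and run $TT^*$ from it. Your closing sentence gestures at the endpoint obstructions but supplies neither ingredient. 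A secondary imprecision: on the lower line $\frac2q+\frac1r=\frac12$ the decay $|t|^{-1/2}$ comes from the Gaussian $\mu$-integration alone, with the $\xi$-integral estimated trivially (this is where the $N^{3/2}$ loss arises); ``van der Corput in $x$ plus a crude $L^2_\mu$ use in $y$'' would naturally produce mixed norms $L^q_tL^r_xL^2_y$ rather than the $L^q_TL^r_{xy}$ norms claimed in \eqref{hom}.
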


\begin{proof}

The Strichartz estimates are known when assuming additionally $\beta(q,r) \le 1/2 $ (see \cite{Saut}, \cite{MoSaTz4}). Here we notice that this assumption is not needed. 

We only need to show \eqref{hom}.  The decoupled estimates in \eqref{non-hom} follows either from similar arguments or the Christ-Kiselev lemma since $q>2 $ and $ \tilde {q}'<2$. 

First we prove for any $N\in 2^\Z$, we have
\EQ{\label{eq:homproof1}
\|U_\pm(t)P_N\phi  \|_{L^{q}_{T}L^{r}_{xy}}
\lesssim  N^{\beta(q,r)}\|\phi\|_{L^2}.
}
By $TT^*$-method, \eqref{eq:homproof1} is equivalent to
\EQ{\label{eq:homproof2}
\normo{\int U_\pm(t-s)P_Nf ds }_{L^{q}_{T}L^{r}_{xy}}
\lesssim  N^{2\beta(q,r)}\|f\|_{L^{q'}_TL^{r'}_{x,y}}.
}
Note that 
$$
U_\pm(t) P_N\phi =G_\pm(\cdot,\cdot,t)\star \phi\, ,
$$
where $\star$ denotes the convolution with respect to the spatial variables
and $ G $ is defined by 
\EQ{
G_\pm(x,y,t):=&\int_{\R^2} e^{i(x\xi+y \mu)}\, e^{it(\xi^3\mp\mu^2/\xi)} \phi(\xi/N)\, d\xi
\, d\mu. 
}
Integrating $\mu$, we get
\EQ{
G_\pm(x,y,t)=&\int_{\R} t^{-1/2}\xi^{1/2} e^{i(t\xi^3+x\xi\mp {y^2\xi/t})} \phi(\xi/N)\, d\xi. 
}
Thus we get (see \cite{GPW}) for $\theta\in [0,1/2]$
\EQ{
|G_\pm(x,y,t)|\les |t|^{-1/2}
\sup_{x\in\R}\,\, \Bigl|\int_{\R} \xi^{\frac{1}{2}}\, e^{i(t\xi^3+x\xi)}\phi(\xi/N)d\xi\Bigr|\les |t|^{-\frac{1}{2}-\theta}N^{\frac{3}{2}-3\theta}.
}
Interpolating with the $L^2$ estimate, we get for $r\in [2,\infty]$ and $\theta\in [0,1/2]$.
\EQ{
\norm{U_\pm(t) P_N\phi}_{L^r_{x,y}}\les |t|^{(-\frac{1}{2}-\theta)({1-\frac{2}{r}})}N^{(\frac{3}{2}-3\theta)(1-\frac{2}{r})} \norm{\phi}_{L^{r'}_{x,y}}.
}
Thus by this and the Hardy-Sobolev inequality we get
\EQ{\label{eq:homproof3}
\normo{\int U_\pm(t-s)P_Nf ds }_{L^{q}_{T}L^{r}_{xy}}
\lesssim&  \normo{\int \norm{U_\pm(t-s)P_Nf}_{L^{r}_{xy}} ds }_{L^{q}_{T}}\\
\les& \normo{\int |t-s|^{(-\frac{1}{2}-\theta)({1-\frac{2}{r}})}N^{(\frac{3}{2}-3\theta)(1-\frac{2}{r})}\norm{f}_{L^{r'}_{xy}} ds }_{L^{q}_{T}}\\
\les& N^{(\frac{3}{2}-3\theta)(1-\frac{2}{r})}\|f\|_{L^{q'}_TL^{r'}_{x,y}}
}
provided that $0\leq (\frac{1}{2}+\theta)({1-\frac{2}{r}})<1$, $2<q\leq \infty$, $\frac{1}{q}=(\frac{1}{2}+\theta)({\frac{1}{2}-\frac{1}{r}})$. Rewriting $\theta$ in terms of $q,r$, we prove \eqref{eq:homproof2}.

Next we try to sum over $N$.  When $r<\infty$, using the Littlewood-Paley square function theorem we have
\EQN{
\norm{D_x^{-\beta(q,r)} U_\pm(t)\phi}_{L^{q}_{T}L^{r}_{xy}}\les& \norm{N^{-\beta(q,r)} U_\pm(t)P_N\phi}_{L^{q}_{T}L^{r}_{xy}l_N^2}\\
\les & \norm{N^{-\beta(q,r)} U_\pm(t)P_N\phi}_{l_N^2L^{q}_{T}L^{r}_{xy}}\\
\les &\|P_N\phi\|_{l_N^2 L^2}\les \|\phi\|_{L^2}.
}
When $r=\infty$, we couldn't use the square function theorem. Instead, we use $(\theta,1)$-real interpolation between the following two estimates
\EQN{
\norm{U_\pm(t)P_N\phi}_{l_2^{-\beta(q+,r)}L^{q+}_{T}L^{\infty}_{xy}}\les& \|\phi\|_{L^2}\\
\norm{U_\pm(t)P_N\phi}_{l_2^{-\beta(q-,r)}L^{q-}_{T}L^{\infty}_{xy}}\les& \|\phi\|_{L^2}.
}
By Theorem 5.6.2 and Theorem 5.2.1 in \cite{Inter}, we get
\EQ{
(l_2^{-\beta(q+,r)}L^{q+}_{T}L^{\infty}_{xy}, l_2^{-\beta(q-,r)}L^{q-}_{T}L^{\infty}_{xy})_{\theta,1}=l_1^{-\beta(q,r)}L^{q,1}_{T}L^{\infty}_{xy}.
}
Thus we can get 
\EQN{
\norm{D_x^{-\beta(q,r)} U_\pm(t)\phi}_{L^{q}_{T}L^{\infty}_{xy}}\les& \norm{U_\pm(t)P_N\phi}_{l_1^{-\beta(q,r)}L^{q}_{T}L^{\infty}_{xy}}\les \|\phi\|_{L^2}. 
}
The proof is complete.  

We note that for $r=\infty$, one can have an alternative proof using the frequency-global dispersive estimate: for $ 0\leq \varepsilon<3/2$
\begin{equation} \label{jhg}
\|D_x^{-\varepsilon}\, U_\pm(t)\phi\|_{L^\infty_{xy}}\lesssim   |t|^{-1+\ep/3}\norm{\phi}_{L^1} .
\end{equation}
Indeed, to show the above, 
integrating a gaussian integral with respect to $\mu$ (see \cite{Saut}), we get
for $ 0\leq \varepsilon<3/2$,
\begin{equation*} 
\|D_x^{-\varepsilon}\, G_\pm(x,y,t) \|_{L^\infty_{xy}}\lesssim   |t|^{-1/2}
\sup_{x\in\R}\,\, \Bigl|\int_{\R}|\xi|^{\frac{1}{2}-\ep}\, e^{i(t\xi^3+x\xi)}d\xi\Bigr| \les |t|^{-1+\ep/3}.
\end{equation*}
Then $(q,\infty)$-estimates follow from $TT^*$-method as before. 
\end{proof}

It follows in particular from this lemma that 
 for $ 4\le q < 8 $, 
\begin{equation}\label{est1}
\| U_\pm(t) \varphi \|_{L^q_{T}L^4_{xy}} \lesssim \|D_x^{\frac{3}{4}-\frac{3}{q} } \varphi\|_{L^2_{xy}}
\end{equation}
and for $ 2< q< 4 $ 
\begin{equation}\label{est2}
\| U_\pm(t) \varphi \|_{L^{q}_{T}L^\infty_{xy}} \lesssim \| D_x^{\frac{3}{2}-\frac{3}{q} } \varphi\|_{L^2_{xy}} .
\end{equation}
Next we collect some product estimates in the following lemma. 

\begin{lemma}[Product estimates]\label{LemProd} 
We have

(1) For $\theta\in\R $, $J_x^{|\theta|+} f\in L^\infty(\R^2) $ and $ g\in H^{\theta,0} $ it holds 
\begin{equation} \label{prod1}
\| f \, g \|_{H^{\theta,0}} \lesssim \|J_x^{|\theta|+} f\|_{L^\infty} \|g\|_{H^{\theta,0}}. 
\end{equation}

(2) Let $1\leq p,q\leq \infty$, $\frac{1}{2}\leq r\leq \infty $ satisfy $\frac{1}{p}+\frac{1}{q} = \frac{1}{r}$.  Assume $\theta>\max(0,\frac{1}{r}-1)$ or $\theta\in 2\N$.  Then it holds 
\begin{equation} \label{prod3}
\| J^{\theta}_x (f^2)\|_{L^{r}} \lesssim \| f\|_{L^p_{xy}} \|J^{\theta}_x f\|_{L^q_{xy}}.
\end{equation}

(3) For an acceptable frequency envelop $\{\omega_N\}_{N>0}$ we have
\begin{equation} \label{prod33}
\| f^2\|_{H^{s,0}_\omega} \lesssim \| f\|_{H^{s,0}_\omega} \| f\|_{L^\infty_{xy}}.
\end{equation}
\end{lemma}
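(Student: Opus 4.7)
The plan is to decompose $fg$ (or $f^2$) into paraproducts using the Littlewood-Paley projectors $P_N$ in the $x$-variable only, and to treat separately the low-high, high-low, and high-high frequency interactions, carrying the $y$ variable as a parameter.

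For part (1), first reduce to $\theta\geq 0$ by duality: since $(H^{\theta,0})^{*}=H^{-\theta,0}$ via the $L^2$ pairing, $\|fg\|_{H^{\theta,0}}=\sup_{\|h\|_{H^{-\theta,0}}\leq 1}|\langle g,fh\rangle|\leq \|g\|_{H^{\theta,0}}\sup\|fh\|_{H^{-\theta,0}}$, so the positive-regularity case applied to $fh$ (using $|-\theta|=|\theta|$) closes the negative case. With $\theta\geq 0$, write $fg=\sum_{N_1,N_2}P_{N_1}f\cdot P_{N_2}g$. In the low-high piece ($N_1\ll N_2$, output at frequency $\sim N_2$) one bounds $\|P_{N_1}f\cdot P_{N_2}g\|_{L^2}\leq \|f\|_{L^\infty}\|P_{N_2}g\|_{L^2}$, then multiplies by $N_2^\theta$ and $\ell^2$-sums. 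The high-low piece is symmetric and uses Bernstein in $x$ to convert $L^\infty$ control of $J_x^{\theta+}f$ into an $N_1^\theta\|J_x^{\theta+}f\|_{L^\infty}$ bound. The delicate high-high piece ($N_1\sim N_2\gtrsim N$, output at frequency $N$) is exactly where the $0+$ regularity is used: the dyadic sum $\sum_{N_1\gtrsim N}(N/N_1)^{0+}$ converges thanks to the strict inequality.

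For part (2), the case $\theta\in 2\N$ follows directly from the Leibniz rule for $\partial_x^{2k}(f^2)$ combined with H\"older. For general $\theta>\max(0,1/r-1)$, I would invoke the fractional Leibniz rule for $J_x^\theta$, viewed as a one-dimensional operator with $y$ fixed, in the form of Kenig-Ponce-Vega for $r\geq 1$ and its extensions by Grafakos-Oh or Muscalu-Schlag for $1/2\leq r<1$; the strict inequality $\theta>1/r-1$ is precisely the admissibility window for the sub-unit range. Once the pointwise-in-$y$ estimate $\|J_x^\theta(f^2)(\cdot,y)\|_{L^r_x}\lesssim \|f(\cdot,y)\|_{L^p_x}\|J_x^\theta f(\cdot,y)\|_{L^q_x}$ is in hand, a final H\"older in $y$ delivers the two-dimensional bound.

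For part (3), I would repeat the paraproduct argument of (1) with $\theta=s\geq 0$ and $g=f$, tracking the envelope weights. The key observation is that $\omega_N\leq \omega_{2N}\leq \delta\omega_N$ implies $\omega_N\sim \omega_{N'}$ for dyadically comparable scales and the monotonicity $\omega_N\leq \omega_{N_1}$ whenever $N_1\geq N$. In the low-high and high-low pieces, the output weight equals (up to $\delta$) the weight at the high input scale, producing $\|f\|_{L^\infty}\|f\|_{H^{s,0}_\omega}$; in the high-high piece the monotonicity dominates the output weight by the high-frequency input weight, so the same summation as in (1) closes. The main technical obstacle throughout is the high-high interaction in (1) and (3): the logarithmic divergence of $\sum_{N_1\gtrsim N}$ must be tamed, and this is precisely what the strict $0+$ derivative on $f$ buys (via Bernstein, $\|P_{N_1}f\|_{L^\infty}\lesssim N_1^{-\theta-}\|J_x^{\theta+}f\|_{L^\infty}$), with the slowly-varying property of $\omega_N$ making the weighted version no harder than the unweighted one.
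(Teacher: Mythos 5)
Your proposal is correct and follows essentially the same route as the paper, which simply invokes the standard Bony paraproduct decomposition (in the $x$-variable, with $y$ as a parameter) for (1) and (3) and cites the fractional Leibniz/Kato--Ponce literature for (2); your slice-in-$y$ application of the one-dimensional inequality followed by H\"older in $y$ is the natural way to obtain the anisotropic statement. The only caveat is that the endpoint cases permitted in (2) (namely $p$ or $q$ equal to $1$, and $r=\tfrac12$ or $r=1$) lie outside the classical Kenig--Ponce--Vega/Grafakos--Oh range and require the $L^1$-endpoint Kato--Ponce inequality, which is precisely the reference \cite{OhWu} the paper relies on.
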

\begin{proof}
For (2), see Corollary 1.1 in \cite{OhWu} and references therein. (1) and (3) follows from the standard Bony's paraproduct decomposition.  
\end{proof}

\section{Refined Strichartz estimates for solutions}\label{Sec3}
In this section we prove different a priori estimates on the $ L^2_T L^\infty_{xy} $-norm of solutions to the KP-I equation.
\begin{proposition}\label{pro1}
 Let $ 0<T<2 $ and $ u \in L^\infty_T H^{s,0}$ with $ s>2/3 $ be a solution to 
$$
(u_t + u_{xxx} +f_x)_x- u_{yy} =0
$$
then 
\begin{eqnarray}
\|  J_x^{s-(\frac{1}{2}+)} u \|_{L^{4}_T L^4_{xy}}
& \lesssim &  \| u \|_{L^\infty_T H^{s,0}} +
  \|J_x^{s-} f \|_{L^{4}_T L^1_{xy} }    \label{ImprovedStri1}
\end{eqnarray}
and 
\begin{eqnarray}
\|  J_x^{s-(\frac{2}{3}+)} u \|_{L^{2+}_T L^\infty_{xy}}
& \lesssim &  \| u \|_{L^\infty_T H^{s,0}} +
  \| J_x^s  f \|_{L^{2+}_T L^\frac{4}{3}_{xy} }  \label{ImprovedStri2}
\end{eqnarray}
In particular, any solution  $ u \in L^\infty_T H^{\frac{2}{3}+,0}$  to the KP-I equation belongs to $  L^{2+}_T L^\infty_{xy} $ and satisfies 
\begin{equation}\label{esti1}
\|J_x^{0+}u\|_{L^{2+}_T L^\infty_{xy}} \lesssim  (1+ \|J_x^{\frac{2}{3}+} u \|_{L^\infty_T L^2_{xy}}^2) \|J_x^{\frac{2}{3}+} u \|_{L^\infty_T L^2_{xy}} 
\end{equation}
\end{proposition}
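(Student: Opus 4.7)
The plan is to use the Koch--Tzvetkov-style refined Strichartz technique: for each dyadic $x$-frequency $N$, partition $[0,T]$ into $\sim NT$ disjoint subintervals $I_j$ of length $\sim N^{-1}$, pick $t_j\in I_j$, and write the Duhamel formula
\[
P_N u(t) = U_-(t-t_j)P_N u(t_j) - \int_{t_j}^{t} U_-(t-s)\,\partial_x P_N f(s)\,ds,\qquad t\in I_j.
\]
The full space-time norm will be recovered by summing the $L^q_{I_j}L^r_{xy}$ contributions in $j$ (allowed since the time exponent $q<\infty$), and by Littlewood--Paley square-function in $N$.

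For the free-evolution term, apply Lemma~\ref{Strichartz} directly: the pair $(q,r)=(4,4)$ gives $\beta=0$ (for \eqref{ImprovedStri1}) while $(q,r)=(2+,\infty)$ gives $\beta=0+$ (for \eqref{ImprovedStri2}), both yielding a bound of the form $N^{\beta}\|P_N u(t_j)\|_{L^2}$. For the inhomogeneous term, the RHS norms $L^4_TL^1_{xy}$ and $L^{2+}_TL^{4/3}_{xy}$ are not native Strichartz duals, so the route is to combine the frequency-localised dispersive bound extracted from the proof of the Strichartz lemma,
\[
\|U_\pm(t)P_N\phi\|_{L^\infty_{xy}}\lesssim |t|^{-\frac{1}{2}-\theta}\,N^{\frac{3}{2}-3\theta}\,\|\phi\|_{L^1_{xy}},\qquad \theta\in\bigl[0,\tfrac{1}{2}\bigr],
\]
with Hardy--Littlewood--Sobolev in time, interpolate in space against the trivial $L^\infty_t L^2_{xy}$ energy bound to land in the desired spatial index, and finally apply H\"older in time on $I_j$ (using $|I_j|\sim N^{-1}$) to adjust the time exponent to the one appearing on the RHS. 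The extra $\partial_x$ contributes one power of $N$; after collecting all $N$-powers, the loss reduces to exactly $N^{1/2+}$ in \eqref{ImprovedStri1} and $N^{2/3+}$ in \eqref{ImprovedStri2}. Multiplying by $N^{s-(1/2+)}$ or $N^{s-(2/3+)}$ respectively and square-summing closes the proof of the two estimates.

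The consequence \eqref{esti1} is then obtained by plugging $f=u^2/2$ into \eqref{ImprovedStri2} with $s=\tfrac{2}{3}+$, using Lemma~\ref{LemProd}(2) in the form $\|J_x^{\frac{2}{3}+}(u^2)\|_{L^{4/3}_{xy}}\lesssim\|u\|_{L^4_{xy}}\|J_x^{\frac{2}{3}+}u\|_{L^2_{xy}}$, and controlling the remaining $\|u\|_{L^{2+}_TL^4_{xy}}$-factor by H\"older in time combined with \eqref{ImprovedStri1} at $s=\tfrac{2}{3}+$; the source term in that second application of \eqref{ImprovedStri1} brings in another $\|J_x^{\frac{2}{3}+}u\|_{L^\infty_TL^2_{xy}}^2$ via the same product estimate, producing the cubic polynomial on the right-hand side of \eqref{esti1}.

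The main obstacle is the precise Strichartz/Besov bookkeeping needed for the derivative losses to come out exactly $1/2+$ and $2/3+$ with no extra slack. In particular, the $L^1_{xy}$ on the RHS of \eqref{ImprovedStri1} does not arise from any standard inhomogeneous Strichartz estimate --- one really has to go through the dispersive bound with $\theta=0$ and then interpolate in space back to $L^4_{xy}$. Similarly, the pair $(2+,\infty)$ used for \eqref{ImprovedStri2} is an endpoint flirtation with the forbidden $(2,\infty)$, so the `$+$' in $s-(\tfrac{2}{3}+)$ is not cosmetic: it records exactly the slack one loses by stepping back from that forbidden endpoint.
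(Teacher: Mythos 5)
Your reduction of \eqref{esti1} to \eqref{ImprovedStri1}--\eqref{ImprovedStri2} via \eqref{prod3} is essentially the paper's argument, but the proof you sketch for the two refined estimates has a quantitative gap: the choice $|I_j|\sim N^{-1}$ is not the right interval length, and with it the claimed losses $N^{\frac12+}$ and $N^{\frac23+}$ are not attainable. Concretely, on an interval of length $\delta$ the Duhamel term is controlled (this is the best available pairing, whether you phrase it as the inhomogeneous Strichartz estimate with $(\tilde q,\tilde r)=(2+,\infty)$, resp.\ $(4,4)$, or re-derive it by dispersive bounds plus Hardy--Littlewood--Sobolev) by
\begin{equation*}
\|{\textstyle\int} U_\pm(t-s)\partial_x P_N f\,ds\|_{L^4_{I_j}L^4_{xy}} \lesssim N^{1+}\,\delta^{\frac14+}\,\|P_N f\|_{L^4_{I_j}L^1_{xy}},
\qquad
\|\cdot\|_{L^{2+}_{I_j}L^\infty_{xy}} \lesssim N^{1+}\,\delta^{\frac14-}\,\|P_N f\|_{L^{2+}_{I_j}L^{4/3}_{xy}},
\end{equation*}
while the free part, after re-summing over the $\sim T/\delta$ intervals, loses $(T/\delta)^{1/4}$, resp.\ $(T/\delta)^{1/(2+)}$. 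With $\delta\sim N^{-1}$ both Duhamel losses are $N^{\frac34+}$, which exceeds the budgets $N^{\frac12+}$ and $N^{\frac23+}$; you would only obtain the estimates with $s-(\frac34+)$ derivatives, and then the ``in particular'' statement \eqref{esti1} would require $u\in L^\infty_T H^{\frac34+,0}$ rather than $H^{\frac23+,0}$. The missing idea is that the interval length must be tuned separately for each estimate so as to balance the two losses: the paper takes $|I_j|\sim N^{-\theta}T$ with $\theta=2$ for \eqref{ImprovedStri1} (balancing $N^{\theta/4}$ against $N^{1+}N^{-\theta/4+}$, giving $N^{\frac12+}$) and $\theta=\frac43$ for \eqref{ImprovedStri2} (balancing $N^{\theta/2+}$ against $N^{1+}N^{-\theta/4+}$, giving $N^{\frac23+}$). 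Your asserted final bookkeeping simply does not follow from the steps you describe.

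A secondary point: your remark that the $L^1_{xy}$ (and $L^{4/3}_{xy}$) right-hand sides ``do not arise from any standard inhomogeneous Strichartz estimate'' is not accurate in this paper's framework. Lemma \ref{Strichartz} is stated for two \emph{different} admissible pairs, so choosing $(\tilde q,\tilde r)=(2+,\infty)$ in \eqref{non-hom} directly produces the dual norm $L^{2-}_{T}L^{1}_{xy}$ (and $(\tilde q,\tilde r)=(4,4)$ produces $L^{4/3}_TL^{4/3}_{xy}$, converted to $L^{4/3}_{xy}$ data with no Bernstein cost); the H\"older step in time on the short interval then converts $L^{2-}_{I_j}$, resp.\ $L^{4/3}_{I_j}$, into the $L^4_{I_j}$, resp.\ $L^{2+}_{I_j}$, norms of the statement. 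So no hand-made interpolation through the dispersive estimate is needed, but in any case that route cannot rescue the exponent count above.
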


\begin{proof}
Let's first assume \eqref{ImprovedStri1}-\eqref{ImprovedStri2}.  By Lemma \ref{LemProd}  \eqref{prod3} and \eqref{ImprovedStri1} we get that any solution $ u\in L^\infty_T H^{s,0} $ with $s>1/2$  of the KP-I equation satisfies 
$$
\| u \|_{L^{4}_T L^4_{xy}}\lesssim   \| u \|_{L^\infty_T H^{\frac{1}{2}+,0}} +\|u\|_{L^\infty_T L^2_{xy}} \|u\|_{L^\infty_T H^{\frac{1}{2}+,0}}.
$$
In the same way \eqref{ImprovedStri2}  and \eqref{prod3} leads for $ s>2/3 $  to 
$$\| J_x^{s-(\frac{2}{3}+)} u \|_{L^{2+}_T L^\infty_{xy}}
 \lesssim   \| u \|_{L^\infty_T H^{s,0}} +\|u\|_{L^4_T L^4_{xy}} 
  \|u \|_{L^{\infty}_T H^{s,0} }.
$$
Combining this two estimates, we obtain \eqref{esti1}.

It thus remains to prove \eqref{ImprovedStri1}-\eqref{ImprovedStri2}. 
Let $ N \ge 1 $ be a dyadic integer and $ \theta>0 $. We chop the time interval $[0,T]$ into small pieces of length $\sim N^{-\theta} T $, i.e., we define $\{I_{j,N}\}_{j\in J_{N}}$ with $\# J_{N}\sim N^{\theta} $ so that $\bigcup_{j\in J_{N}}I_{j,N}=[0,T]$, $|I_{j,N}|\sim N^{-\theta} T$.
Using the Duhamel formula, \eqref{est1} with $q=4 $ and \eqref{non-hom}  with $(q,r)=(4,4) $ and $(\tilde{q},\tilde{r})=(2+,\infty)$ we get 
\begin{eqnarray}
\| P_N u \|_{L^{4}_T L^4_{xy}}^{4}& \lesssim & \sum_{j\in J_N} \|P_N u \|_{L^\infty_T L^2_{xy}}^{4} +
\sum_{j\in J_N} \Bigl(N^{0+}  \|P_N f_x \|_{L^{2-}_{I_j}  L^1_{xy}} \Bigr)^{4} \nonumber\\
& \lesssim &  \sum_{j\in J_N} \|P_N u \|_{L^\infty_T L^2_{xy}}^{4} +
\sum_{j\in J_N} \Bigl(N^{1+} \, |I_j|^{-1/4+\frac{1}{2}+} \|P_N f \|_{L^{4}_{I_j}  L^1_{xy}} \Bigr)^{4}\nonumber\\
& \lesssim & N^{\theta}  \|P_N u \|_{L^\infty_T L^2_{xy}}^{4} + T^{1+}N^{-\theta+4+} \|P_N f \|_{L^{4}_{T}  L^1_{xy}}^4\; .\label{tyty}
\end{eqnarray}
Proceeding in the same way but without decomposing $ [0,T] $ in small intervals, we  also obtain
$$
\| P_{\le 1} u \|_{L^{4}_T L^4_{xy}} \lesssim   \|P_{\le 1} u \|_{L^\infty_T L^2_{xy}} + T^{\frac{1}{4}+} \|P_{\le 1} f \|_{L^{4}_{T}  L^1_{xy}} \; .
$$
This leads to \eqref{ImprovedStri1} by taking $\theta=2 $ in \eqref{tyty} , applying the obtained estimated to $J^{s-(\frac{1}{2}+)} u $,
 re-summing in $ N\ge 1$ and adding the contribution of $P_{\le 1} u$.
In the same way by applying  this time \eqref{est2} with $ q=2+ $ and \eqref{non-hom} with $(q,r)=(2+,\infty) $ and $(\tilde{q},\tilde{r})=(4,4)$ we get  
\begin{eqnarray}
\| P_N u \|_{L^{2+}_T L^\infty_{xy}}^{2+}& \lesssim & \sum_{j\in J_N} \|J_x^{0+}P_N u \|_{L^\infty_T L^2_{xy}}^{2+} +
\sum_{j\in J_N} \Bigl( N^{0+}  \|P_N f_x \|_{L^{4/3}_{I_j}  L^{4/3}_{xy}} \Bigr)^{2+}\nonumber \\
& \lesssim &  \sum_{j\in J_N}  N^{0+} \|P_N u \|_{L^\infty_T L^2_{xy}}^{2+} +
\sum_{j\in J_N} \Bigl(N^{1+0+} \, |I_j|^{\frac{3}{4}-\frac{1}{2+}} \|P_N f \|_{L^{2+}_{I_j}  L^{4/3}_{xy}} \Bigr)^{2+} \nonumber\\
& \lesssim & N^{\theta+}  \|P_N u \|_{L^\infty_T L^2_{xy}}^{2+} + T^{\frac{1}{2}+}N^{-\frac{\theta}{2}+2+} \|P_N f \|_{L^{2+}_{T}  L^{4/3}_{xy}}^{2+}
\label{toti1}
\end{eqnarray}
that leads to \eqref{ImprovedStri2} by taking $\theta=4/3$, applying the obtained estimated to $J^{s-(\frac{2}{3}+)} u $, re-summing in $ N\ge 1$ and adding the contribution of $ P_{\le 1} u $   given by 
$$
\|  P_{\le 1} u \|_{L^{2+}_T L^\infty_{xy}}
\lesssim    \|P_{\le 1} u \|_{L^\infty_T L^2_{xy}} + T^{\frac{1}{4}+} \|P_{\le 1} f \|_{L^{2+}_{T}  L^{4/3}_{xy}}^{2+}.
$$
\end{proof}

Let $ u $ be a solution to KP and $ N\gg 1$.  We will make a constant use  of the following estimate for $ P_{\lesssim N} u $ on  small time intervals $ I $  of length  $ |I| \lesssim N^{-1} $.

\begin{lemma}\label{lemnew}
Let $ 0<T<2 $ and $ u \in L^\infty_T H^{\frac{3}{4},0}$ be a solution to 
\begin{equation}\label{eqeq}
(u_t + u_{xxx} +f_x)_x \pm  u_{yy} =0 
\end{equation}
then for any $ N\gg 1 $ and any sub-interval of time $ I\subset ]0,1[ $ with $ |I| \lesssim N^{-1} T $ it holds
\begin{eqnarray}
\|  P_{\lesssim N} u \|_{L^{2}_I L^\infty_{xy}}
& \lesssim & T^{\frac{1}{4}-} N^{-\frac{1}{4}+}  \Bigl( \|J_x^{\frac{3}{4}} u \|_{L^\infty_I L^2_{xy}} +
  \|J_x^{\frac{3}{4}}  f \|_{L^{2}_I L^2_{xy} } \Bigr) \label{ImprovedStri5}
\end{eqnarray}
 \end{lemma}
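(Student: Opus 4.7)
The plan is to mimic Proposition~\ref{pro1}, applying Strichartz estimates directly on the single small interval $I=[a,b]$ rather than chopping it further, and to save the factor $N^{-1/4+}$ from the smallness $|I|\lesssim TN^{-1}$ via H\"older's inequality in time. For each dyadic $N_1$ with $1\le N_1\lesssim N$, Duhamel on $I$ reads
\[
P_{N_1}u(t)=U_\pm(t-a)P_{N_1}u(a)\,\mp\,\int_a^t U_\pm(t-s)\,P_{N_1}\partial_x f(s)\,ds,\qquad t\in I,
\]
and we will bound the two pieces in $L^2_I L^\infty_{xy}$ separately, summing over $N_1$ at the end.

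For the linear (homogeneous) piece, the plan is to apply Strichartz \eqref{est2} with $q=4-$ to the frequency-localized data, giving
\[
\|U_\pm(\cdot-a)P_{N_1}u(a)\|_{L^{4-}_I L^\infty_{xy}}\lesssim N_1^{3/4-}\,\|P_{N_1}u(a)\|_{L^2_{xy}},
\]
and then to use H\"older in $t$ to pass from $L^{4-}_I$ to $L^2_I$ at the cost $|I|^{1/4+}\lesssim T^{1/4+}N^{-1/4-}$. Summing over $1\le N_1\lesssim N$ by Cauchy--Schwarz against the weight $N_1^{3/4}$ absorbs the $N_1^{0+}$ loss into $N^{0+}$ and produces $\|J_x^{3/4}u\|_{L^\infty_I L^2_{xy}}$.

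For the Duhamel piece, the right choice is the non-homogeneous Strichartz \eqref{non-hom} with pairs $(q,r)=(2+,\infty)$ and $(\tilde q,\tilde r)=(\infty,2)$, both admissible, for which the total derivative loss $\beta(q,r)+\beta(\tilde q,\tilde r)=0+$ is negligible. This yields
\[
\Bigl\|\int_a^t U_\pm(t-s)P_{N_1}\partial_x f(s)\,ds\Bigr\|_{L^{2+}_I L^\infty_{xy}}\lesssim N_1^{1+}\|P_{N_1}f\|_{L^1_I L^2_{xy}}\lesssim N_1^{1+}|I|^{1/2}\|P_{N_1}f\|_{L^2_I L^2_{xy}},
\]
where Bernstein accounts for $\partial_x$ and Cauchy--Schwarz in $t$ handles the $L^1_I\to L^2_I$ passage. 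Switching the outer norm from $L^{2+}_I$ to $L^2_I$ costs an extra $|I|^{0+}$, and $|I|\lesssim TN^{-1}$ turns the cumulative $|I|$ factor into $T^{1/2+}N^{-1/2-}$. Summing $\sum_{N_1\lesssim N}N_1^{1+}\|P_{N_1}f\|_{L^2}$ by Cauchy--Schwarz against $N_1^{3/4}$ produces $N^{1/4+}\|J_x^{3/4}f\|_{L^2_I L^2_{xy}}$, so the two $N$-powers combine to the advertised $N^{-1/4+}$. Since $T\le 2$, both $T^{1/4+}$ and $T^{1/2+}$ are controlled by $T^{1/4-}$, and the low-frequency piece $P_{\le1}u$ is handled identically, with all dyadic sums reduced to a single term.

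The main obstacle is the simultaneous balancing of exponents: the Strichartz derivative loss $3/2-3/q$ (which pushes $q$ close to $4$ to match $N_1^{3/4}$), the Bernstein factor $N_1$ from $\partial_x$, the H\"older time factor $|I|^{1-1/q}$ combined with $|I|\lesssim TN^{-1}$ (which pushes $q$ small to extract $N^{-1/4}$), and the Cauchy--Schwarz cost $N^{7/4-3/q}$ from the sum over $N_1\lesssim N$. The choice $q=2+$ for the Duhamel piece is essentially forced: it minimizes the Strichartz derivative cost so that the entire $N_1$ loss comes from $\partial_x$, and it is the only value for which the final power of $N$ is strictly negative.
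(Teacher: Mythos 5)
Your argument is correct and is essentially the paper's own proof: Duhamel on the small interval, H\"older in time plus the $q=4-$ Strichartz estimate \eqref{est2} for the free part, and the retarded estimate \eqref{non-hom} with $(2+,\infty)$, $(\infty,2)$ plus Bernstein for the Duhamel part, your only (cosmetic) deviation being the dyadic decomposition of $P_{\lesssim N}$ with Cauchy--Schwarz summation where the paper treats the whole band at once. One harmless slip: H\"older from $L^{4-}_I$ to $L^{2}_I$ costs $|I|^{\frac14-}$, not $|I|^{\frac14+}$, and it is exactly this exponent that produces the stated factor $T^{\frac14-}N^{-\frac14+}$.
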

\begin{proof}

Let $t_0 \in I $ with $\|u(t_0)\|_{H^{\frac{3}{4}}} \le \|u\|_{L^\infty_I H^{\frac{3}{4}}} $, we use H\"older inequality in time and  \eqref{est2} with $ q=4- $ for the free evolution and  \eqref{non-hom} with $ (q_1,r_1)=(2+,\infty) $ and $ (q_2,r_2)=(\infty,2) $ for the Duhamel term to get 
\begin{eqnarray}
\|u\|_{L^2_I L^\infty_{xy}} & \lesssim  & |I|^{\frac{1}{4}-} \|U_\pm(\cdot-t_0)u(t_0)\|_{L^{4-}_I L^\infty_{xy}} + \|D_x^{0+}P_{\lesssim N} \partial_x f \|_{L^1_I L^2_{xy}}
\nonumber\\
& \lesssim & |I|^{\frac{1}{4}-}\|u(t_0)\|_{H^{\frac{3}{4},0}} + N^{\frac{1}{4}-} |I|^{1/2} \|P_{\lesssim N}  D_x^{\frac{3}{4}+}f\|_{L^2_I L^2_{xy}}\nonumber \\
&\lesssim & T^{\frac{1}{4}-}N^{-\frac{1}{4}+} \|u(t_0)\|_{H^{\frac{3}{4},0}} + T^\frac{1}{2}N^{-\frac{1}{4}+} \|P_{\lesssim N}  D_x^{\frac{3}{4}+} f\|_{L^2_I L^2_{xy}}\label{estcourt}
\end{eqnarray}
that leads  to \eqref{ImprovedStri5}.
\end{proof}

\section{Trilinear estimates}

In this section, we prove some crucial trilinear estimates which are used in the energy estimates in the consequent sections. 
\begin{definition}
For $u,v\in L^2(\R)$ and $a\in L^\infty(\R^2)$, we set
\begin{equation}\label{def_lambda}
    \F_{x}(\Lambda_a(u,v))(\xi)
    :=\int_{\xi_1+\xi_2=\xi}a(\xi_1,\xi_2)
      \hat{u}(\xi_1)\hat{v}(\xi_2).
\end{equation}
 In the following we will say that $ a $ is acceptable if 
\begin{equation}\label{estcom}
\|\Lambda_a(g,h)\|_{L^2_{x}} \lesssim \|g\|_{L^\infty_x} \|h\|_{L^2_x}\quad  , \forall (g,h)\in ( L^\infty(\R) \cap L^2(\R))^2 
\end{equation}
Note that when $a\equiv1$, we have $\Lambda_a(u,v)=uv$ that obviously satisfies  \eqref{estcom}.
\end{definition}
\begin{definition}
For $u_i\in L^2(\R^3)$ and $a\in L^\infty(\R^2)$, we set
\begin{equation}\label{Pia}
\Pi_a(u_1,u_2,u_3)
    :=\int_{\sum_{i=1}^3 (\tau_i,\xi_i,\eta_i) =(0,0,0)} a(\xi_2,\xi_3)
    \prod_{i=1}^3  \hat{u_i}(\tau_i,\xi_i,\eta_i).
\end{equation}
Note that
\EQ{
\Pi_a(u_1,u_2,u_3)=\iiint u_1(t,x,y) \Lambda_a(u_2(t,\cdot,y),u_3(t,\cdot,y))(x)\, dtdxdy.
}
\end{definition}
It is worth noticing that 
\EQ{
\Pi_a(u_1,u_2,u_3)=\Pi_{\tilde{a}}(u_3,u_1,u_2)=\Pi_{\tilde{\tilde{a}}}(u_2,u_3,u_1)
}
with $  \tilde{a}(z_1,z_2):=a(-z_1-z_2,z_1) $ and $ \tilde{\tilde{a}}(z_1,z_2):=a(z_2,-z_1-z_2) $. Therefore any estimate on $ \Pi_a(u_1,u_2,u_3) $ with a constant that only depends on $ \|a\|_{L^\infty(\R^2)} $ will hold for any permutation of $(u_1,u_2,u_3) $. 

We will apply a trilinear estimate on small  time intervals with length depending on the highest $x$-frequency of the involved functions. The $ X^{0,\frac{1}{2},1}_\pm $ will be only used as intermediate spaces. We need to recall some linear estimates in the context of $ X^{0,\frac{1}{2},1}_\pm $-space on small time interval  (See \cite{IKT})
\begin{lemma}\label{lem1}
Let $ \eta \in C_0^\infty(\R) $ then for any $ u\in X^{0,1/2,1}_\pm  $  and  any $ K>0 $ it holds 
\begin{equation}\label{estXX}
\|\eta(\cdot/K) \, u \|_{X^{0,1/2,1}_\pm} \lesssim \|u \|_{X^{0,1/2,1}_\pm}\quad \text{ and } \quad 
\|\eta(\cdot/K) \, u \|_{L^2_{txy}} \lesssim (1\wedge K^{-1/2}) \|u \|_{X^{0,1/2,1}_\pm}
\end{equation}
\end{lemma}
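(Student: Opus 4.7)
My plan is to reduce both estimates to Besov-in-time statements via the free-propagator conjugation. Setting $v(t,x,y):=U_\pm(-t)u(t,x,y)$, the modulation $\sigma_\pm=\tau-\omega_\pm$ of $u$ is converted into the ordinary time-frequency $\tau$ of $v$, so that
$$\|u\|_{X^{0,1/2,1}_\pm}=\sum_{L\ge 1}L^{1/2}\|\phi_L(\tau)\hat v(\tau,\xi,\mu)\|_{L^2_{\tau,\xi,\mu}}.$$
Since $U_\pm(t)$ is an $L^2_{xy}$-isometry at each fixed $t$, and multiplication by the scalar $\eta(\cdot/K)$ commutes with the propagator, both norms appearing in the lemma coincide for $u$ and for $v$; thus everything reduces to purely time-Besov statements about $v$.

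For the first estimate, I would rely on the fact that $\widehat{\eta(\cdot/K)}(\tau)$ is a rescaled Schwartz function with $L^1_\tau$-norm equal to $\|\hat\eta\|_{L^1}$ independently of $K$. Decomposing $v=\sum_{L'}Q_{L'}^\pm v$ and viewing $Q_L^\pm(\eta(\cdot/K)Q_{L'}^\pm v)$ as a Fourier convolution in $\tau$, Young's inequality gives the near-diagonal bound $\|Q_L^\pm(\eta(\cdot/K)Q_{L'}^\pm v)\|_{L^2}\lesssim \|Q_{L'}^\pm v\|_{L^2}$ when $L\sim L'$. When $L\not\sim L'$, the convolution forces a separation $|\tau-\tau'|\gtrsim\max(L,L')$, whereupon the rapid Schwartz decay of $\hat\eta$ produces off-diagonal bounds with arbitrary polynomial gain in the appropriate scaled modulation. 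A Schur-type summation against the weights $L^{1/2}$ then closes the estimate with constants independent of $K$.

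For the second estimate I combine two natural Hölder bounds. The embedding $X^{0,1/2,1}_\pm\hookrightarrow L^\infty_tL^2_{xy}$, obtained from $\|Q_L^\pm v\|_{L^\infty_tL^2_{xy}}\lesssim L^{1/2}\|Q_L^\pm v\|_{L^2}$ (Cauchy--Schwarz in $\tau$ on the modulation slab) summed in $L$, yields $\|\eta(\cdot/K)v\|_{L^2_{txy}}\le \|\eta(\cdot/K)\|_{L^2_t}\|v\|_{L^\infty_tL^2_{xy}}$, contributing the cutoff's $L^2_t$-norm as the $K$-dependent factor. The embedding $X^{0,1/2,1}_\pm\hookrightarrow L^2_{txy}$, which I would derive from
$$\|v\|_{L^2_{txy}}\le\sum_L\|Q_L^\pm v\|_{L^2}\le \Bigl(\sum_{L\ge 1}L^{-1}\Bigr)^{\!1/2}\!\|v\|_{X^{0,1/2}_\pm}\lesssim \|v\|_{X^{0,1/2,1}_\pm}$$
(using $\ell^2\subset\ell^1$, the convergent dyadic $\sum_{L\ge 1}L^{-1}$, and $\|\cdot\|_{X^{0,1/2}_\pm}\le\|\cdot\|_{X^{0,1/2,1}_\pm}$), yields the uniform bound $\|\eta(\cdot/K)v\|_{L^2_{txy}}\le \|\eta\|_\infty\|v\|_{L^2_{txy}}\lesssim \|v\|_{X^{0,1/2,1}_\pm}$. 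Taking the minimum of the two bounds and translating back to $u$ produces the claimed factor $(1\wedge K^{-1/2})$.

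The main technical obstacle I expect is the Schur summation in part (1) in the regime where both $L$ and $L'$ are smaller than the scale set by $K$, so that the Schwartz-decay of the kernel is unavailable; there the argument must close instead by exploiting the dyadic restriction $L,L'\ge 1$ and the convergence of $\sum_{L\ge 1}L^{-1}$ to absorb the mismatched weights.
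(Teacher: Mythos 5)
Your overall strategy (conjugate by the free group so that the modulation becomes the time frequency, treat multiplication by the cutoff as a convolution in $\tau$, and combine the embeddings $X^{0,1/2,1}_\pm\hookrightarrow L^\infty_tL^2_{xy}$ and $X^{0,1/2,1}_\pm\hookrightarrow L^2_{txy}$) is the standard one; note that the paper gives no proof of this lemma and simply refers to \cite{IKT}, where essentially this argument appears. However, two steps do not close as written. For the first estimate, you correctly isolate as problematic the regime where both modulations $L,L'$ lie below the spread $M$ of $\widehat{\eta_K}$, but the mechanism you invoke there (the restriction $L,L'\ge 1$ and the convergence of $\sum_{L\ge 1}L^{-1}$) is not enough: with only the Young bound $\|Q_L(\eta_K Q_{L'}v)\|_{L^2}\lesssim \|Q_{L'}v\|_{L^2}$ one is left with $\sum_{L'\ll L\lesssim M}L^{1/2}\sim M^{1/2}$, which is not controlled by $L'^{1/2}$. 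The regime is closed by the complementary Young inequality $\|g\ast h\|_{L^\infty}\le\|g\|_{L^\infty}\|h\|_{L^1}$: since $\|\widehat{\eta_K}\|_{L^\infty_\tau}\lesssim M^{-1}$ while $\|\widehat{\eta_K}\|_{L^1_\tau}\lesssim 1$, and $\|\widehat{Q_{L'}v}\|_{L^1_\tau}\le L'^{1/2}\|\widehat{Q_{L'}v}\|_{L^2_\tau}$ by Cauchy--Schwarz on the modulation slab, one gets $\|Q_L(\eta_K Q_{L'}v)\|_{L^2}\lesssim M^{-1}L^{1/2}L'^{1/2}\|Q_{L'}v\|_{L^2}$, and then $\sum_{L\lesssim M}L^{1/2}\cdot M^{-1}L^{1/2}L'^{1/2}\lesssim L'^{1/2}$ as required. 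This is not a removable corner case: in the application the cutoffs live at temporal scale $T/N_1\ll 1$, so modulations below the spread of the kernel genuinely occur.

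For the second estimate there is an exponent inconsistency. Your Hölder step produces the factor $\|\eta(\cdot/K)\|_{L^2_t}=K^{1/2}\|\eta\|_{L^2}$, so the minimum of your two bounds is $1\wedge K^{1/2}$, not the claimed $1\wedge K^{-1/2}$; indeed, under the literal reading of $\eta(\cdot/K)$ (support of length $\sim K$) the stated gain of $K^{-1/2}$ for $K\ge 1$ would be false, since restricting to a large time interval cannot improve on $\|u\|_{L^2_{txy}}\lesssim\|u\|_{X^{0,1/2,1}_\pm}$. The lemma must be read as localization at temporal scale $K^{-1}$ --- this is how it is used in the proof of Proposition \ref{prop_tri}, where $\eta_{\frac{N_1}{T},j}$ is supported on an interval of length $\sim T/N_1$ --- and with that normalization the cutoff's $L^2_t$-norm is $\sim K^{-1/2}$ and your two-bound argument gives exactly \eqref{estXX}. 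As written, the factor $(1\wedge K^{-1/2})$ does not follow from your own computation, so you need to fix the normalization of the cutoff before the conclusion is legitimate.
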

\begin{lemma}\label{lem2}
Let $ \eta \in C_0^\infty(\R) $. Then for $ \varphi \in L^2_{xy}$ and  any $ K>0 $ it holds 
\begin{equation}\label{linearS}
\| \eta(\cdot/K) \, S(t) \varphi \|_{X^{0,1/2,1}_\pm} \lesssim \|\varphi \|_{L^2_{xy}}
\end{equation}
Moreover, for any $ f\in L^2(\R^3) $ and  any $ K>0 $ it holds 
\begin{equation}\label{linearD1}
\|\eta(\cdot/K) \,\int_0^t S(t-t') f(t') \, dt' \|_{X^{0,1/2,1}_\pm} \lesssim \sum_{L\ge 1} L^{1/2} (L+K)^{-1} \|Q_L f\|_{L^2_{txy}}
\lesssim \|f\|_{X^{0,-1/2,1}_\pm}.
\end{equation}
In particular, for $ K\ge 1$, 
\begin{equation}\label{linearD2}
\| \eta(\cdot/K) \,\int_0^t S(t-t') f(t') \, dt' \|_{X^{0,1/2,1}_\pm} \lesssim K^{-1/2}  \| f\|_{L^2_{txy}}.
\end{equation}
\end{lemma}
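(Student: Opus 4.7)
The plan is to compute everything on the Fourier side. The key observation is that multiplication by $\eta(\cdot/K)$ in time corresponds to convolution of the space--time Fourier transform in the $\tau$-variable by $K\hat{\eta}(K\cdot)$, which is an approximate identity on scale $1/K$ with rapidly decaying (Schwartz) tails; every estimate will ultimately follow from understanding how this convolution interacts with the modulation level sets $|\sigma_\pm|\sim L$ that define the norm.

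For \eqref{linearS}, since $\mathcal{F}(S(t)\varphi)(\tau,\xi,\mu)=\delta(\tau-\omega_\pm(\xi,\mu))\hat{\varphi}(\xi,\mu)$, one gets
\begin{equation*}
\mathcal{F}(\eta(\cdot/K)S(t)\varphi)(\tau,\xi,\mu)=K\hat{\eta}\bigl(K(\tau-\omega_\pm(\xi,\mu))\bigr)\hat{\varphi}(\xi,\mu),
\end{equation*}
so that for each dyadic $L\geq 1$,
\begin{equation*}
\|Q_L^\pm(\eta(\cdot/K)S(t)\varphi)\|_{X^{0,1/2}_\pm}^{2}\lesssim L\int |\phi_L(\sigma)|^{2}K^{2}|\hat{\eta}(K\sigma)|^{2}\,d\sigma\cdot\|\varphi\|_{L^{2}_{xy}}^{2}.
\end{equation*}
The change of variable $\sigma'=K\sigma$ combined with the Schwartz decay of $\hat{\eta}$ gives a bound $LK\min(1,(KL)^{-N})\|\varphi\|_{L^{2}}^{2}$. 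Taking square roots and splitting the dyadic sum into the regimes $L\leq 1/K$ (where $\sqrt{LK}\leq 1$ and the sum is geometric up to $1$) and $L\geq 1/K$ (where the Schwartz gain is summable) yields \eqref{linearS}.

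For \eqref{linearD1}, I would use the identity
\begin{equation*}
\int_{0}^{t}e^{it'\sigma}\,dt'=\frac{e^{it\sigma}}{i\sigma}-\frac{1}{i\sigma}
\end{equation*}
to decompose $\int_0^t S(t-t')f(t')\,dt'=w_1-w_2$, where $\widehat{w_1}(\tau,\xi,\mu)=\hat{f}(\tau,\xi,\mu)/(i(\tau-\omega_\pm))$ and $w_2=S(t)\Psi$ with $\hat{\Psi}(\xi,\mu)=\int\hat{f}(\omega_\pm+\sigma,\xi,\mu)/(i\sigma)\,d\sigma$. Applying this to each $Q_L^\pm f$ separately: for $w_2$, Cauchy--Schwarz in $\sigma$ against the indicator of $\{|\sigma|\sim L\}$ gives $\|\Psi\|_{L^2_{xy}}\lesssim L^{-1/2}\|Q_L^\pm f\|_{L^2_{txy}}$, and \eqref{linearS} then yields a term of the needed form (using $L^{-1/2}\leq 2L^{1/2}/(L+K)$). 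For $w_1$, note $\widehat{Q_L^\pm w_1}$ is supported in $|\sigma_\pm|\sim L$ with pointwise bound $L^{-1}|\widehat{Q_L^\pm f}|$, so after the $\tau$-convolution by $K\hat{\eta}(K\cdot)$ one decomposes into the contributions of modulation levels $L'\sim L$ (where the weight $L^{1/2}$ and the factor $L^{-1}$ yield $L^{-1/2}\|Q_L^\pm f\|_{L^2_{txy}}$) and $L'\not\sim L$ (where the Schwartz decay of $\hat{\eta}$ produces a gain $(K\max(L,L'))^{-N}$ allowing summation in $L'$). Combining both contributions gives the bound $L^{1/2}(L+K)^{-1}\|Q_L^\pm f\|_{L^2_{txy}}$, and the triangle inequality in $L$ concludes. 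The final inequality in \eqref{linearD1} is immediate from $L^{1/2}(L+K)^{-1}\leq L^{-1/2}$. Finally, \eqref{linearD2} follows from \eqref{linearD1} by Cauchy--Schwarz:
\begin{equation*}
\sum_{L\geq 1}\frac{L^{1/2}}{L+K}\|Q_L^\pm f\|_{L^2_{txy}}\leq\Bigl(\sum_{L\geq 1}\frac{L}{(L+K)^{2}}\Bigr)^{1/2}\|f\|_{L^2_{txy}}\lesssim K^{-1/2}\|f\|_{L^2_{txy}},
\end{equation*}
where the first factor is $\simeq K^{-1/2}$ by splitting the dyadic sum at $L=K$. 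The main technical obstacle I expect is the bookkeeping in the $w_1$ piece of \eqref{linearD1}: keeping the weights sharp enough across the off-diagonal regime $L'\not\sim L$ to end up with the combined factor $(L+K)^{-1}$ rather than a weaker $L^{-1/2}$.
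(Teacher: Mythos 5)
The paper does not actually prove this lemma; it is quoted from \cite{IKT}, so the comparison is with the intended statement and its use in Proposition \ref{prop_tri}. Your proof has a genuine problem, and it starts with the reading of the cutoff. You treat $\eta(\cdot/K)$ literally as a cutoff supported on a time interval of length $\sim K$, whose time-Fourier transform $K\hat\eta(K\cdot)$ is a bump of width $1/K$. Under that reading the lemma is simply false for $K\ge 1$: since $\langle\sigma_\pm\rangle\ge 1$ one has $\|u\|_{X^{0,1/2,1}_\pm}\gtrsim\|u\|_{L^2_{txy}}$, so the left side of \eqref{linearS} is at least $\|\eta(\cdot/K)\|_{L^2_t}\|\varphi\|_{L^2_{xy}}\sim K^{1/2}\|\varphi\|_{L^2_{xy}}$, and a similar example ruins \eqref{linearD1}--\eqref{linearD2}. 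The estimates encode localization to time intervals of length $\sim 1/K$ (this is exactly how they are used: in Proposition \ref{prop_tri} the gain $K^{-1/2}$ with $K=N_1/T$ is the square root of the length $T/N_1$ of the subintervals $I_j$, and Remark \ref{Rem31} says so explicitly). Because your reading makes the statement false, your computation must contain an error, and it does: the bound $LK\min(1,(KL)^{-N})$ for the $Q_L$ piece of \eqref{linearS} fails at $L=1$ when $K>1$, since $Q_1$ contains the region $|\sigma_\pm|\le 1/K$ where $K\hat\eta(K\sigma)$ has size $K$ and no decay; the correct size of that term is $\sim K$, which reproduces the $K^{1/2}$ lower bound above.

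Even leaving the convention aside, the core of \eqref{linearD1} is not established. For the $w_2$ piece you invoke $L^{-1/2}\le 2L^{1/2}/(L+K)$, which is false whenever $K>L$; and the diagonal ($L'\sim L$) part of $w_1$ likewise only yields $L^{-1/2}\|Q_L f\|_{L^2_{txy}}$. But the whole content of the factor $(L+K)^{-1}$ is the regime $L\ll K$, where it improves on the trivial $L^{-1}$ by the factor $L/(L+K)$; this improvement cannot be obtained by estimating $w_1$ and $w_2$ separately, because it comes from the cancellation between them at low modulation on the short time interval: writing the Duhamel term as a constant times $\int \frac{e^{it\sigma}-1}{i\sigma}\,e^{it\omega_\pm}\,\widehat{Q_L f}\,d\tau'$ and using $|e^{it\sigma}-1|\lesssim |t||\sigma|\lesssim L/K$ on the support of a cutoff living at times $|t|\lesssim 1/K$ is what produces the gain (this, or the equivalent Fourier-side computation, is the argument in \cite{IKT}). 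So the difficulty you dismiss as ``bookkeeping in the off-diagonal regime'' sits in the diagonal, low-modulation regime and is precisely the missing idea. The only part that stands as written is the deduction of \eqref{linearD2} from \eqref{linearD1} by Cauchy--Schwarz, which is correct; \eqref{linearS} also holds, but via the rescaled computation in which $\hat\eta$ is a bump of width $K$ and height $K^{-1}$, not the one you wrote.
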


We will make a constant use of the following trilinear estimate that follows from Strichartz estimates together with the famous smoothing lemma derived in \cite{IKT} in the case of the KP-I equation and together with the resonance relation in the case of the KP-II equation.
\begin{proposition}\label{Pro1}
Let $ a\in L^\infty(\R^2) $ with $ \|a\|_{L^\infty} \lesssim 1 $.
For any triplet $(N_1,N_2,N_3) \in [1,+\infty[^2\times\R_+^* $ with   $N_1\sim N_2\ge  N_3>0 $ and any $ u_1, \, u_2,\, u_3\in X^{0,1/2,1}_\pm$ where $X^{0,\frac{1}{2},1}_\pm$ is the Bourgain'space defined in \eqref{deftX} associated with the KP-I or KP-II equations,  it holds 
\begin{equation}\label{triestX1}
\Bigl| \Pi_a \Bigl(  P_{N_1} u_1, P_{N_2}  u_2 , P_{N_3}  u_3 \Bigr) \Bigr| 
\lesssim (N_1 N_2 N_3)^{-1/2}  \prod_{i=1}^3 \|P_{N_i} u_i \|_{X^{0,1/2,1}_\pm} \; .
\end{equation}
\end{proposition}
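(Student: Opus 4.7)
The plan is to reduce, by a dyadic decomposition in the modulation variable, the trilinear estimate \eqref{triestX1} to a family of bilinear $L^2_{txy}$ estimates that can be closed either by the resonance identity (for KP-II) or by the Ionescu–Kenig–Tataru bilinear smoothing lemma (for KP-I). Concretely, I would insert $u_i=\sum_{L_i\ge 1}Q_{L_i}^\pm u_i$ in each slot of $\Pi_a$ and recall that $\|u\|_{X^{0,1/2,1}_\pm}=\sum_{L\ge 1}L^{1/2}\|Q_L^\pm u\|_{L^2_{txy}}$. By the triangle inequality it suffices to prove, for each triple $L_1,L_2,L_3\ge 1$,
\[
\bigl|\Pi_a(Q_{L_1}^\pm P_{N_1}u_1,Q_{L_2}^\pm P_{N_2}u_2,Q_{L_3}^\pm P_{N_3}u_3)\bigr|\les \frac{\delta(L_1,L_2,L_3)\,(L_1L_2L_3)^{1/2}}{(N_1N_2N_3)^{1/2}}\prod_{i=1}^{3}\|Q_{L_i}^\pm P_{N_i}u_i\|_{L^2_{txy}}
\]
with $\delta$ summable over the dyadic $L_i$'s. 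The cornerstone is the resonance identity: on the set $\sum_i(\tau_i,\xi_i,\mu_i)=0$ one has $\sum_i\sigma_\pm(\tau_i,\xi_i,\mu_i)=-\sum_i\omega_\pm(\xi_i,\mu_i)$, so $\max_i L_i\ges|\Omega_\pm|$ with $\Omega_\pm:=\sum_i\omega_\pm(\xi_i,\mu_i)$.

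For the KP-II equation ($\omega_+$), an elementary algebraic manipulation on the constraints $\xi_1+\xi_2+\xi_3=\mu_1+\mu_2+\mu_3=0$ combined with $N_1\sim N_2\ge N_3$ gives the lower bound $|\Omega_+|\ges N_1^2N_3$, hence $L_{\max}\ges N_1^2N_3$. Placing the two lowest-modulation factors in $L^4_{t,xy}\cdot L^4_{t,xy}$ via the Strichartz pair $(q,r)=(4,4)$ of Lemma~\ref{Strichartz} (for which $\beta(q,r)=0$), the remaining factor in $L^2_{txy}$ by Plancherel, and exploiting $\|a\|_{L^\infty}\les 1$, the gain $L_{\max}^{-1/2}\les (N_1^2N_3)^{-1/2}\sim(N_1N_2N_3)^{-1/2}$ is exactly what is required. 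Summation over the two smaller $L_i$ is immediate because the $L^4$-Strichartz only loses a factor $L_i^{1/2}$ in the Bourgain norm, leaving a room of $L_i^{-\varepsilon}$ for convergence.

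In the KP-I case ($\omega_-$) the resonance $\Omega_-$ may vanish even at non-trivial frequencies, so the preceding modulation-based argument breaks down. The remedy is to invoke the Ionescu–Kenig–Tataru bilinear smoothing lemma for the KP-I semigroup, which in the configuration $N_1\sim N_2\ge N_3$ produces an $L^2_{txy}$-bound for the product $P_{N_1}v\cdot P_{N_2}w$ (or its $P_{N_3}$-localisation) with the missing gain of a negative power of the highest $x$-frequency in terms of $L_i^{1/2}\|Q_{L_i}^-\cdot\|_{L^2}$. Inserting this bilinear bound into the identity $\Pi_a(u_1,u_2,u_3)=\int u_1\,\Lambda_a(u_2,u_3)$, pairing with the remaining factor via Plancherel-Cauchy–Schwarz, and using $\|a\|_{L^\infty}\les 1$, one recovers the $(N_1N_2N_3)^{-1/2}$ factor together with the matching $L_i^{1/2}$ prefactors.

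I expect the main obstacle to be the KP-I case: there, the resonance identity cannot be used as a blunt tool, and the IKT smoothing must be combined carefully with the modulation decomposition. In particular, one has to redo the case analysis $N_1\sim N_2\sim N_3$ versus $N_1\sim N_2\gg N_3$ with slightly different bookkeeping, absorb any $y$-frequency loss via Bernstein in $\mu$, and track the $L_i^{1/2}$ factors extracted from the $X^{0,1/2,1}_-$-norm so that the resulting $\delta(L_1,L_2,L_3)$ is summable in the three dyadic modulation indices while still fitting under $(N_1N_2N_3)^{-1/2}$.
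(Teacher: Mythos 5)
Your proposal follows essentially the same route as the paper's proof: dyadic decomposition in the modulation variable, the $L^2\times L^4\times L^4$ Strichartz argument when the largest modulation dominates $N_1N_2N_3$ (which the KP-II resonance identity always forces), and the Ionescu--Kenig--Tataru smoothing lemma for KP-I in the remaining low-modulation region, so the argument is correct in substance. One minor remark: since $X^{0,1/2,1}_\pm$ is an $\ell^1$-sum over modulations, a per-triple bound with $\delta(L_1,L_2,L_3)\lesssim 1$ already suffices, so the extra summability you ask for (and the claimed $L_i^{-\varepsilon}$ room, which the sharp $L^4$ estimate does not actually provide) is unnecessary.
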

\begin{proof}
We decompose each  function $u_i $, $i=1,2,3 $ as $ u_i =\sum_{L_j\ge 1} Q_j u_i $. Obviously, it suffices to prove that for any triplet $ (L_1,L_2,L_3) \in (2^{\N})^3 $ it holds 
\EQN{
I_{L_1,L_2,L_3}  :=& \Bigl| \Pi_a \Bigl(  Q_{L_1}P_{N_1} u_1, Q_{L_2}P_{N_2}  u_2 , Q_{L_3}P_{N_3}  u_3 \Bigr) \Bigr| \\
\lesssim& (N_1 N_2 N_3)^{-1/2}  \prod_{i=1}^3 \|Q_{L_i}P_{N_i} u_i \|_{X^{0,1/2,1}_\pm} \; .
}
 In the region  $\max(L_1,L_2,L_3) \ge 2^{-8} N_1 N_2 N_3 $ it follows from the  $ L^4 $-Strichartz estimate.  For instance if $L_1\ge  2^{-8}N_1 N_2 N_3 $, it holds
\begin{align*}
I_{L_1,L_2,L_3} 
& \lesssim \int_{\sum_{i=1}^3 (\tau_i,\xi_i,\eta_i) =(0,0,0)}  \prod_{i=1}^3 |\widehat{Q_{L_i} P_{N_i} u_i}|\\
& \le \|Q_{L_1} P_{N_1} u_1\|_{L^2_{txy}} \| {\mathcal F}_{txy}^{-1}(|\widehat{Q_{L_2} P_{N_2} u_2}|)\|_{L^{4}_{txy}} 
 \|{\mathcal F}_{txy}^{-1}(|\widehat{Q_{L_3}  P_{N_3} u_3}|)\|_{L^{4}_{txy}} 
  \\
  & \lesssim (N_1 N_2 N_3)^{-1/2}    \|Q_{L_1} P_{N_1} u_1\|_{X^{0,1/2,1}_\pm}
  \| Q_{L_2} P_{N_2} u_2\|_{X^{0,1/2,1}_\pm} \| Q_{L_3} P_{N_2} u_2\|_{X^{0,1/2,1}_\pm}
 \end{align*}
 and the cases $ L_2$ or $ L_3 \ge  2^{-8} N_1 N_2 N_3 $ follows the same lines. 
 
The estimate $\max(L_1,L_2,L_3) < 2^{-8} N_1 N_2 N_3 $ cannot hold for the KP-II equation thanks to the well-known  resonance relation
$$
\sigma_+(\tau_1,\xi_1,\mu_1)+\sigma_+(\tau-\tau_1,\xi-\xi_1,\mu-\mu_1)-\sigma_+(\tau,\xi,\mu)= \xi \xi_1 (\xi-\xi_1) \Bigl( 3+ \frac{(\mu \xi_1 -\mu_1 \xi)^2}{( \xi \xi_1 (\xi-\xi_1))^2}\Bigr). 
$$
Finally, for the KP-I equation,  in the region $\max(L_1,L_2,L_3) < 2^{-8} N_1 N_2 N_3 $, the result follows from the the famous smoothing estimate of Ionescu-Kenig -Tataru (\cite{IKT}, Lemma 5.2 page 283).
\end{proof}

\begin{proposition}[Refined Estimate I]\label{prop_tri}
Let $ \epsilon\in \{-1,1\}$, $0<T<2$, and  $N_1\sim N_2\gtrsim  N_3>0$ with $ N_1\gg 1$.
Let  $f_1,f_2, f_3\in L^2(]0,T[; L^2(\R^2))$ and let $a\in L^\infty(\R^2)$ with  $\|a\|_{L^\infty}\lesssim 1$ that is acceptable in the sense of \eqref{estcom}.
Finally let   $u_1,u_2, u_3\in C([0,T];L^2(\R^2))$ satisfying
\begin{equation}\label{equj}
\partial_x (\partial_t u_j + \partial_x^3 u_j +\partial_x f_j) +\epsilon \partial_{yy}  u_j=0
\end{equation}
on $]0,T[\times \R^2$ for $j=1,2,3$. Then for any decomposition of $ P_{N_3} f_3$ as 
$P_{N_3} f_3=P_{N_3} f_{3,1} + P_{N_3}  f_{3,2}$ with $ f_{3,1}\in  L^2(]0,T[; L^2(\R^2))$ and $ f_{3,2}\in  L^\infty(]0,T[; L^{1+}(\R^2))$ it holds 
\EQ{
&\Bigl| \int_0^T  \int_{\R^2} P_{N_1} u_1(\tau)  \Lambda_a(P_{N_2} u_2(\tau) , P_{N_3} u_3(\tau)) \, d\tau \Bigr| \\
\lesssim& \; \Bigl(     N_3^{-1/2} \|P_{N_3} u_3 \|_{L^\infty_T L^2_{xy}}+\Bigl(\frac{N_3}{N_1}\Bigr)^{\frac{1}{2}} \cro{N_3}^{0+}\sup_{I\subset [0,T] \atop |I|\lesssim N_1^{-1}T} \Bigl[ \|P_{N_3} f_{3,1} \|_{L^2_I L^2_{xy}} +  \|P_{N_3} f_{3,2} \|_{L^{\infty-}_I L^{1+}_{xy}}\Bigr]\Bigr)\\
& \times\prod_{i=1}^2 \Bigl( T^{-1/2} \| P_{N_i} u_i\|_{L_T^2 L^2_{xy}}+  \Bigl(\frac{N_3}{N_1}\Bigr)^{\frac{1}{2}}
 \| P_{N_i} u_i\|_{L_T^\infty  L^2_{xy}}+T^\frac{1}{2}\|P_{N_i} f_i \|_{L_T^2 L^2_{xy}}\Bigr)\; .\label{tritri}
}
\end{proposition}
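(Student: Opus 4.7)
My plan is to use the short-time Bourgain strategy: partition $[0,T]$ into temporal intervals of length dictated by the highest frequency $N_1$, apply Duhamel on each piece, lift to $X^{0,1/2,1}_\pm$, and invoke the IKT-based trilinear estimate of Proposition~\ref{Pro1}. Concretely, I would partition $[0,T]$ into $|J|\sim N_1$ intervals $I_j$ of length $|I_j|\sim N_1^{-1}T$, together with smooth cutoffs $\tilde\eta_j\in C_0^\infty(\R)$ equal to $1$ on $I_j$ and supported on slight enlargements. Since $\tilde\eta_j\equiv 1$ on $I_j$, these cutoffs can be inserted freely, so the left-hand side is dominated by
\[
\sum_{j\in J}\bigl|\Pi_a(\tilde\eta_j P_{N_1}u_1,\tilde\eta_j P_{N_2}u_2,\tilde\eta_j P_{N_3}u_3)\bigr|\lesssim \sum_j(N_1N_2N_3)^{-1/2}\prod_{i=1}^3\|\tilde\eta_j P_{N_i}u_i\|_{X^{0,1/2,1}_\pm}.
\]

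Next, for each pair $(i,j)$, I would pick $c_j^{(i)}\in I_j$ (by a mean-value argument) satisfying
\[
\|P_{N_i}u_i(c_j^{(i)})\|_{L^2_{xy}}\lesssim\min\bigl(\|P_{N_i}u_i\|_{L^\infty_T L^2_{xy}},\,|I_j|^{-1/2}\|P_{N_i}u_i\|_{L^2_{I_j}L^2_{xy}}\bigr),
\]
and split $u_i$ on $I_j$ via Duhamel into the free evolution from $c_j^{(i)}$ plus the forcing integral. Lemma~\ref{lem2} (applying~\eqref{linearD2} at the scale set by $|I_j|$) combined with Bernstein $\|\partial_x P_{N_i}g\|_{L^2}\lesssim N_i\|g\|_{L^2}$ yields
\[
\|\tilde\eta_j P_{N_i}u_i\|_{X^{0,1/2,1}_\pm}\lesssim\|P_{N_i}u_i(c_j^{(i)})\|_{L^2_{xy}}+N_iN_1^{-1/2}\|P_{N_i}f_i\|_{L^2_{I_j}L^2_{xy}}.
\]
For the weak-space piece $f_{3,2}\in L^{\infty-}_I L^{1+}$, an additional Bernstein in the spatial variables converts $L^{1+}_{xy}$ into $L^2_{xy}$ at cost $N_3^{0+}$ (producing the $\langle N_3\rangle^{0+}$ weight), paired with a dual H\"older in time on $I_j$.

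Then I would expand each bracket above, multiply over $i=1,2,3$, and sum over $j$ by H\"older on $J$: contributions bounded by $L^\infty_T$-norms are summed in $\ell^\infty$ (no $|J|$-cost), contributions bounded by $|I_j|^{-1/2}\|\cdot\|_{L^2_{I_j}}$ are summed in $\ell^2$ (yielding $T^{-1/2}\|\cdot\|_{L^2_T L^2}$ via $|J||I_j|\sim T$), and $\sup_I$-type forcings are summed in $\ell^1$ (with a $|J|\sim N_1$ cost). Each such H\"older assignment cancels the resulting $|J|^\alpha\sim N_1^\alpha$ factors against the trilinear prefactor $(N_1N_2N_3)^{-1/2}\sim N_1^{-1}N_3^{-1/2}$ (using $N_1\sim N_2$), reconstructing one of the summands in each factor of~\eqref{tritri}: the $N_3^{-1/2}\|u_3\|_{L^\infty_T L^2}$ contribution from the free $u_3$-evolution, the $T^{\pm 1/2}$ weights on $u_1,u_2$ and $f_1,f_2$, and the $(N_3/N_1)^{1/2}$ weights on the mixed cross-terms.

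The hard part will be the combinatorial bookkeeping needed to extract the $(N_3/N_1)^{1/2}$ factors, which encode the high-low asymmetry $N_3\leq N_1\sim N_2$ and emerge only after carefully balancing the Bernstein losses $N_i$ from $\partial_x$, the $N_1^{-1/2}$ short-time gain from~\eqref{linearD2}, and the $|J|$-cancellations against the trilinear prefactor. The handling of the $L^{\infty-}_I L^{1+}$ piece $f_{3,2}$ will be the most technically delicate step: the dual H\"older exponents in time on $I_j$ must be chosen so that the small $|I_j|$-power from H\"older compensates the Bernstein loss $N_3^{0+}$ in the spatial variables, while still fitting into the above summation scheme.
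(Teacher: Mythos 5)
Your overall strategy (chop $[0,T]$ into $\sim N_1$ pieces, Duhamel from well-chosen times $c_j^{(i)}$, lift to $X^{0,1/2,1}_\pm$, apply Proposition \ref{Pro1}, resum in $j$) is indeed the skeleton of the paper's argument, but your very first reduction contains a genuine gap. Writing the left-hand side as $\sum_j\int_{I_j}$ and then claiming that the cutoffs ``can be inserted freely'' so that each piece is dominated by $|\Pi_a(\tilde\eta_j P_{N_1}u_1,\tilde\eta_j P_{N_2}u_2,\tilde\eta_j P_{N_3}u_3)|$ is false: since $\tilde\eta_j\equiv 1$ only on $I_j$ and the integrand has no sign, the sharp truncation $1\!\!1_{I_j}$ does not disappear, and multiplication by a characteristic function of an interval is not admissible in the $X^{0,1/2,1}_\pm$ framework (it fails to lie in $B^{1/2,1}_2$ in time). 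The paper avoids this by using a genuine smooth partition of unity $\sum_k\eta_k\equiv 1$ adapted to $[0,T]$: the interior pieces then carry no sharp cutoff, but the two endpoint pieces necessarily retain $1\!\!1_{[0,T]}$ and must be estimated by a completely different argument --- H\"older in $(t,x,y)$ together with the acceptability hypothesis \eqref{estcom} on $a$ and the Strichartz estimates of Lemma \ref{Strichartz} applied to the Duhamel representation of $u_3$. Your proposal never invokes \eqref{estcom} at all, which is a symptom of this missing step; note also that it is precisely these boundary terms that produce the factors $(N_3/N_1)^{1/2}\|P_{N_i}u_i\|_{L^\infty_TL^2_{xy}}$ in \eqref{tritri}, which you instead attribute to the interior combinatorics.

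The second gap is the treatment of $f_{3,2}$. Bernstein cannot convert $L^{1+}_{xy}$ into $L^2_{xy}$ at cost $N_3^{0+}$: the projection $P_{N_3}$ localizes only the $x$-frequency, so there is no Bernstein gain in $y$ at all, and even in $x$ the cost would be of order $N_3^{1/2-}$, which would destroy the claimed $\langle N_3\rangle^{0+}$ weight. The paper instead estimates the corresponding Duhamel contribution through the dual Strichartz embedding $\|w\|_{X^{0,-1/2,1}_\pm}\lesssim\|D_x^{0+}w\|_{L^{2-}_tL^{1+}_{xy}}$ (obtained by duality and interpolation from $\|w\|_{L^{2+}_tL^\infty_{xy}}\lesssim\|D_x^{0+}w\|_{X^{0,1/2,1}_\pm}$), which is the mechanism that makes the $L^{\infty-}_IL^{1+}_{xy}$ norm of $f_{3,2}$ appear with the stated weights. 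Without these two ingredients (separate boundary analysis using \eqref{estcom}, and the dual Strichartz bound for $f_{3,2}$), the bookkeeping you defer to ``the hard part'' cannot close, however carefully the $|J|$-powers are distributed.
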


\begin{proof}
Let $ \eta\in C^\infty_0(\R) $ with values in $ \R_+ $ with $\supp \eta \in [-3/4,3/4] $, $\eta\equiv 1$ on $[-1/4,1/4] $ and $\eta+\eta(\cdot-1)\equiv 1 $ on $ [1/4,3/4] $. Then by construction 
$$
\sum_{k\in\Z} \eta(\cdot-k) \equiv 1 \quad \text{ on } \R \; .
$$
For $ K\ge 4 $ we set $ \eta_{K} = \eta(\cdot/K) $,  $ \eta_{K,j}:=  \eta_{K}(\cdot-K j) $ and $\tilde{\eta}_{K,j}:=  \eta_{K/4}(\cdot-Kj) $ so that 
 $\eta_{K,j} \tilde{\eta}_{K,j} =\eta_{K,j} $.
For $T>0 $ and $ N_1\ge 1 $ a dyadic integer we notice that 
\begin{equation}\label{sum}
\sum_{k=0}^{N_1}  \eta_{\frac{N_1}{T},k}\equiv 1 \quad \text{ and }  \sum_{k=0}^{N_1}  \tilde{\eta}_{\frac{N_1}{T},k}\le 6 \quad \text{ on } [0,T]  
\end{equation}
with 
$\supp \eta_{\frac{N_1}{T},k}\subset [0,T] $ for any $ 1\le k\le N_1-1 $. More precisely it holds 
\begin{equation}\label{OT}
1\! \!1_{[0,T]} = 1\! \!1_{[0,T]}\Bigl(\eta_{\frac{N_1}{T},0}+ \eta_{\frac{N_1}{T},N_1}\Bigr)+ \sum_{k=1}^{N_1-1} \eta_{\frac{N_1}{T},k}\; .
\end{equation}

We defined the subintervals $I_j=I_{j,N_1}$ , $1\le j\le N_1-1 $, of $ [0,T] $ by 
$$
 I_j:=[TN_1^{-1}(j-1/2), T N_1^{-1}(j+1/2)]  \quad \text{ for } 1\le j\le N_1-1\; .
$$
By construction we have $\displaystyle\cup_{j=1}^{N_1} I_j \subset [0,T] $.
For $1\le j \le N_1-1$ and $i\in \{1,2\} $, we choose $c_{i,j,N_1}\in I_J $  at which $\|P_{N_i}u_i(t)\|_{L_x^2}^2$ attains its minimum on $I_j $. In particular, for any $1\le j \le N_1-1$ and any $ i\in \{1,2\} $ it holds 
\begin{equation}\label{defcij}
   \| P_{N_i} u_i(c_{i,j})\|_{L^2_{xy}}^2\le \frac{1}{|I_j|}  \| P_{N_i} u_i\|_{L^2_{I_j}L^2_{xy}}^2
   \lesssim \frac{N_1}{T} \| P_{N_i} u_i\|_{L^2_{I_j}L^2_{xy}}^2
  \end{equation}
  To simplify the notations, we write $c_{i,j}=c_{i,j,N_1}$ and $ \eta_k =\eta_{\frac{N_1}{T},k}$. According to \eqref{sum},  it holds 
  \begin{align}
  \int_0^T & P_{N_1} u_1(\tau)  \Lambda_a(P_{N_2} u_2(\tau) , P_{N_3} u_3(\tau)) \, d\tau \nonumber \\
  &= \Pi_a(1\! \!1_{[0,T]} \eta_0 P_{N_1} u_1, P_{N_2} u_2, P_{N_3} u_3)+  \Pi_a(1\! \!1_{[0,T]} \eta_{N_1}  P_{N_1} u_1,  P_{N_2} u_2, P_{N_3} u_3)\nonumber \\
 & + \sum_{j=1}^{N_1-1}  \Pi_a( \eta_{j} P_{N_1} u_1,  P_{N_2} u_2, P_{N_3} u_3)\nonumber \\
  &=\Pi_a(1\! \!1_{[0,T]} \eta_0 P_{N_1} u_1, \tilde{\eta}_0  P_{N_2} u_2,  \tilde{\eta}_0 P_{N_3} u_3) +  \Pi_a(1\! \!1_{[0,T]} \eta_{N_1}  P_{N_1} u_1,  \tilde{\eta}_{N_1}  P_{N_2} u_2,  \tilde{\eta}_{N_1} P_{N_3} u_3)\nonumber \\
  & + \sum_{j=1}^{N_1-1}  \Pi_a( \eta_{j} P_{N_1} u_1, \tilde{\eta}_j  P_{N_2} u_2,  \tilde{\eta}_j P_{N_3} u_3)\nonumber \\
  &:= A+B+C \; .
  \end{align}
   We start by estimating the contribution of $ C $. Our main tools will be Proposition \ref{Pro1} together with Lemmas \ref{lem1}-\ref{lem2}.
   
 {\it Contribution of $ C $.}
 Since $u_i$ satisfies \eqref{equj}, for any $ t\in [0,T] $ we have 
  \begin{equation}\label{interval}
   u_i(t)=    U_{\pm}(t-c_{i,j})P_{N_i} u_i(c_{i,j})
     +  P_{N_i} F_{i,j} (t),\quad
   \end{equation}
  where
    \begin{equation}
    F_{i,j}(t)
    :=\int_{c_{i,j}}^t U_\pm (t-\tau) \partial_x f_{i}(\tau) d\tau   \;.
   \end{equation}
   In a first step we  only substitute $ u_1 $ and $ u_2 $ by its integral form \eqref{interval}.
   Since  $ N_1\sim N_2 $,  by symmetry it suffices to estimate the 3 following contributions : 
   \begin{eqnarray*}
  C_{1} &=&  \displaystyle \sum_{j=1}^{N_1-1}\Pi_a\Bigl(\eta_j U_\pm(t-c_{1,j})P_{N_1} u_1(c_{1,j}),
    \tilde{\eta}_j U_\pm (t-c_{2,j})P_{N_2} u_2(c_{2,j}), \tilde{\eta}_j P_{N_3} u_3\Bigr)\\
      C_{2} &=&  \displaystyle \sum_{j=1}^{N_1-1}\Pi_a\Bigl(\eta_j P_{N_1} F_{1,j},
    \tilde{\eta}_j P_{N_2}F_{2,j},  \tilde{\eta}_j  P_{N_3} u_3\Bigr) \\
  C_{3} &=&  \displaystyle \sum_{j=1}^{N_1-1}\Pi_a\Bigl(\eta_j U_\pm (t-c_{1,j})P_{N_1} u_1(c_{1,j}),
     \tilde{\eta}_j P_{N_2} F_{2,j}, \tilde{\eta}_j P_{N_3} u_3\Bigr)\\
        \; .
    \end{eqnarray*}
    {\bf Contribution of $ C_1$ :} In  view of \eqref{triestX1}, \eqref{estXX}, \eqref{linearS} and \eqref{defcij}, performing a Cauchy-Schwarz in $ j $
          it holds 
    \begin{eqnarray*}
   | C_1| & \lesssim &(N_1 N_2 N_3)^{-1/2}\sum_{j=1}^{N_1-1} \|  \tilde{\eta}_j P_{N_3} u_3\|_{X^{0,\frac{1}{2},1}_\pm}\prod_{i=1}^2 \| P_{N_i} u_i(c_{i,j})\|_{L^2_{xy}} \\
    & \lesssim & T^{-1} N_3^{-1/2} ( \sup_{j\in \{1,..,N_1-1\}}\|  \tilde{\eta}_j P_{N_3} u_3\|_{X^{0,\frac{1}{2},1}_\pm} )
    \prod_{i=1}^2 \Bigl( \frac{T}{N_1} \sum_{j=1}^{N_1-1} \| P_{N_i} u_i(c_{i,j})\|_{L^2_{xy}}^2 \Bigr)^{1/2}\\
   & \lesssim &   T^{-1}  N_3^{-1/2} ( \sup_{j\in \{1,..,N_1-1\}}\|  \tilde{\eta}_j P_{N_3} u_3\|_{X^{0,\frac{1}{2},1}_\pm} )
   \prod_{i=1}^2  \Bigl(  \sum_{j=1}^{N_1-1} \| P_{N_i} u_i\|_{L^2_{I_j}L^2_{xy}}^2 \Bigr)^{1/2} \\
    & \lesssim &    N_3^{-1/2}  ( \sup_{j\in \{1,..,N_1-1\}}\|  \tilde{\eta}_j P_{N_3}    u_3\|_{X^{0,\frac{1}{2},1}_\pm} )  \prod_{i=1}^2  (T^{-\frac{1}{2}}\|P_{N_i} u_i \|_{L^2_{T} L^2_{xy}}) \; .
    \end{eqnarray*} 
       {\bf Contribution of $ C_2$ :} We  use this time \eqref{linearD2}  and  \eqref{triestX1}  and perform the same Cauchy-Schwarz inequality in $ j $  
to get  
    \begin{eqnarray*}
    |C_2| & \lesssim &  (N_1 N_2 N_3)^{-1/2} \sum_{j=1}^{N_1-1} 
    \|  \tilde{\eta}_j P_{N_3} u_3\|_{X^{0,\frac{1}{2},1}_\pm} \prod_{i=1}^2  \Bigl( T^{1/2} N_1^{-1/2} \| \tilde{\eta}_j  P_{N_i} \partial_x f_{i,j} \|_{L^2_{txy}}\Bigr)\\
    & \lesssim & T  N_3^{-1/2} ( \sup_{j\in \{1,..,N_1-1\}}\|  \tilde{\eta}_j P_{N_3} u_3\|_{X^{0,\frac{1}{2},1}_\pm} )
    \prod_{i=1}^2 \Bigl(\sum_{j=1}^{N_1-1}  \|\tilde{\eta}_j  P_{N_i}   f_i \|_{L^2_t L^2_{xy}}^2 \Bigr)^{1/2}\\
   & \lesssim &   N_3^{-1/2}  ( \sup_{j\in \{1,..,N_1-1\}}\|  \tilde{\eta}_j P_{N_3} u_3\|_{X^{0,\frac{1}{2},1}_\pm} )
    \prod_{i=1}^2  (T^{\frac{1}{2}}\| P_{N_i}  f_i \|_{L^2_T L^2_{xy}}) \quad . 
    \end{eqnarray*} 
    where we made use of the second part of \eqref{sum}. \\
 {\bf Contribution of $ C_3$ :} We combine the arguments used to control $ C_1$  and $C_2$   
 to get  
\begin{eqnarray*}
|C_3| & \lesssim & (N_1 N_2 N_3)^{-1/2}  ( \sup_{j\in \{1,..,N_1-1\}}\|  \tilde{\eta}_j P_{N_3} u_3\|_{X^{0,\frac{1}{2},1}_\pm} )\\&&\sum_{j\in J_{N_1}}
\| P_{N_1} u_1(c_{1,j})\|_{L^2_{xy}}  ( T^{1/2} N_1^{-1/2} \| \tilde{\eta}_j  P_{N_i} \partial_x f_{2,j} \|_{L^2_{txy}}) \\
 & \lesssim &   N_3^{-1/2}  ( \sup_{j\in \{1,..,N_1-1\}}\|  \tilde{\eta}_j P_{N_3} u_3\|_{X^{0,\frac{1}{2},1}_\pm} )
\Bigl( \sum_{j=1}^{N_1-1} T N_1^{-1} \| P_{N_1} u_1(c_{1,j})\|_{L^2_{xy}}^2 \Bigr)^{1/2}\\&&
\Bigl(\sum_{j=1}^{N_1-1} \| \tilde{\eta}_j   P_{N_2} f_2 \|_{L^2_t L^2_{xy}}^2 \Bigr)^{1/2}\\
& \lesssim &N_3^{-1/2}  ( \sup_{j\in \{1,..,N_1-1\}}\|  \tilde{\eta}_j P_{N_3} u_3\|_{X^{0,\frac{1}{2},1}_\pm} ) (T^{-\frac{1}{2}} \|P_{N_1} u_1 \|_{L^2_{T} L^2_{xy}})
(T^{\frac{1}{2}}\|P_{N_2} f_2 \|_{L^2_{T} L^2_{xy}})
 \; .
\end{eqnarray*} 
Gathering the contributions of $C_1, \, C_2 $ and $ C_3$ we obtain
\begin{equation}\label{nana}
    |C|\lesssim N_3^{-1/2}( \sup_{j\in \{1,..,N_1-1\}}\!\!\!\!\|  \tilde{\eta}_j P_{N_3} u_3\|_{X^{0,\frac{1}{2},1}_\pm} ) \prod_{i=1}^2  (T^{-\frac{1}{2}}\|P_{N_i} u_i\|_{ L^2_T L^2_{xy}} +T^{\frac{1}{2}}\| P_{N_i}  f_i \|_{L^2_T L^2_{xy}})\, .
\end{equation}
It remains to estimate $ N_3^{-1/2} \sup_{j\in \{1,..,N_1-1\}}\|  \tilde{\eta}_j P_{N_3} u_3\|_{X^{0,\frac{1}{2},1}_\pm}$.   According to \eqref{hom} with $ \varepsilon=0+$, $q=2$ and $ r=+\infty $ and classical injection of the Strichartz estimates in Bourgain'spaces (see for instance \cite{ginibre}) it holds 
    $$
    \|w\|_{L^{2+}_t L^\infty_{xy}} \lesssim \|D_x^{0+} w\|_{X^{0,1/2,1}_\pm} 
    $$
    and by duality and interpolation we infer that 
\begin{equation}
     \| w\|_{X^{0,-1/2,1}_\pm} \lesssim  \|D_x^{0+} w\|_{L^{2-}_t L^{1+}_{xy}}
\end{equation}
    that leads to 
\EQN{
\| \tilde{\eta}_j  P_{N_3} \partial_x f_{3,2} \|_{X^{0,-1/2,1}_\pm} \lesssim &\|\tilde{\eta}_j P_{N_3}D_x^{0+} \partial_x f_{3,2} \|_{L^{2-}_t  L^{1+}_{xy}}\\
\lesssim&  T^{\frac{1}{2 }} N_1^{-\frac{1}{2}} N_3  \|\tilde{\eta}_j P_{N_3} D_x^{0+}f_{3,2} \|_{L^{\infty-}_{t} L^{1+}_{xy}}. 
}
Therefore substituting $ u_3 $ by its integral form  \eqref{interval} and making use of \eqref{linearS} and \eqref{linearD2}  we get for any 
$  j\in \{1,..,N_1\} $,
\EQN{
N_3^{-1/2}\|  \tilde{\eta}_j P_{N_3} u_3\|_{X^{0,\frac{1}{2},1}_\pm} \lesssim& N_3^{-1/2} \|P_{N_3} u_3 \|_{L^\infty_T L^2_{xy}}\\
&+T^\frac{1}{2}\Bigl(\frac{N_3}{N_1}\Bigr)^{\frac{1}{2}} \cro{N_3}^{0+}\sup_{I\subset [0,T] \atop |I|\lesssim N_1^{-1}T} \Bigl[ \|P_{N_3} f_{3,1} \|_{L^2_I L^2_{xy}} +  \|P_{N_3} f_{3,2} \|_{L^{\infty-}_I L^{1+}_{xy}}\Bigr].
}
This last estimate together  with  \eqref{nana} leads to an admissible estimate on $ C$.\vspace*{2mm}

 {\it Contribution of $ A $.} We notice that we can not use Proposition \ref{pro1} to estimate the contribution of $ A$  since $1_{[0,T]}\not\in B^{1/2,1}_2(\R) $.
  We will instead make use of the Strichartz estimates stated in Lemma \ref{Strichartz}  together with \eqref{estcom}. We apply  \eqref{estcom} and Holder inequality in $ y $ and  then $t$ to get 
    \begin{align*} 
|A|& = \Bigl|\int_0^T \int_{\R^2}\eta_0 P_{N_1} u_1(t) \Lambda_a( \tilde{\eta}_0 P_{N_2} u_2 (t), \tilde{\eta}_0 P_{N_3} u_3(t))\; dt dx dy  \Bigr|\\
&\lesssim  \|\eta_0  P_{N_1} u_1\|_{L^2_{Txy}} \|\tilde{\eta}_0 P_{N_2} u_2\|_{L^\infty_{T} L^2_{xy}}
 \|\tilde{\eta}_0 P_{N_3} u_3\|_{L^2_{T} L^{\infty}_{xy}} \\
 & \lesssim  T^{1/2}N_1^{-1/2}  \| P_{N_1} u_1\|_{L^\infty_{T} L^2_{xy}} \| P_{N_2} u_2\|_{L^\infty_{T} L^2_{xy}}
 \|\tilde{\eta}_0 P_{N_3} u_3\|_{L^2_{T} L^{\infty}_{xy}}.
\end{align*}
Then we rewrite $u_3$,  as 
$
u_3(t)= S(t) u_3(0) + \int_0^t S(t-\tau) u_3(\tau) \,d\tau 
$
and use Lemma \ref{Strichartz} to get 
\begin{align*}
 \|\tilde{\eta}_0 P_{N_3} u_3\|_{L^2_{T} L^{\infty}_{xy}}& \lesssim \|P_{N_3} u_3(0)\|_{L^2_x} + \|\tilde{\eta}_0 D_x^{0+} \partial_x P_{N_3} f_{3,1}\|_{L^1_T L^2_{xy}}
 +\|\tilde{\eta}_0  D_x^{0+} \partial_x P_{N_3} f_{3,1}\|_{L^{2-}_T L^{1+}_{xy}}\nonumber \\
  &  \lesssim \|P_{N_3} u_3(0)\|_{L^2_x} \\
  &\quad+N_3^{1+} N_1^{-1/2} T^{1/2}\sup_{I\subset [0,T] \atop |I|\lesssim N_1^{-1}T} 
  \Bigl[ \|P_{N_3} f_{3,1} \|_{L^2_I L^2_{xy}}
 +   \|P_{N_3} f_{3,2} \|_{L^{\infty-}_I L^{1+}_{xy}} \Bigr]\; .
  \end{align*}
Since $t\mapsto P_{N_3} u(t) $ is continuous with values in $ L^2(\R^2) $ we eventually obtain 
\EQ{
|A|  &\lesssim T^{\frac{1}{2}} \Bigl( \frac{N_3}{N_1}\Bigr)^{\frac{1}{2}} \prod_{i=1}^2 \|P_{N_i} u_i \|_{L^\infty_T L^2_{xy}} \Bigl( N_3^{-\frac{1}{2}} 
\| P_{N_3} u_3 \|_{L^\infty_T L^2_{xy}} \\
& \hspace*{10mm} +
  T^{\frac{1}{2}} \Bigl( \frac{N_3}{N_1}\Bigr)^{\frac{1}{2}} \cro{N_3}^{0+}\sup_{I\subset [0,T] \atop |I|\lesssim N_1^{-1}T}  \Bigl[ \|P_{N_3} f_{3,1} \|_{L^2_I L^2_{xy}} +
    \|P_{N_3} f_{3,2} \|_{L^{\infty-}_I L^{1+}_{xy}}\Bigr]\Bigr)
 }
which is admissible. Finally, $B$ can be treated exactly in the same way by rewriting $ u_3 $ as $
u_3(t)= U_\pm (t) u_3(T) + \int_T^t U_\pm(t-\tau) u_3(\tau) \,d\tau 
$.
\end{proof}

\section{A priori estimates on the solutions}

In this section, we gather all the estimates of the previous sections to  derive a-priori estimates in $L^\infty_T H^{s,0}$. First we prove

\begin{proposition} Let  $0<T<2$, $s>3/4$ and  $u\in L^\infty_T H^{s,0}$ be a solution of the KP-I equation then it holds 
\EQ{\label{estim}
\|u\|_{L^\infty_T H^{s,0}_\om}^2& \lesssim  \|u_0\|_{H^{s,0}_\om}^2+ T^{1/2} \|u \|_{L^2_T L^\infty_{xy}} \| u\|_{L^\infty_T L^2_{xy}}^2 \\
&\quad + \| u\|_{L_T^\infty H^{s,0}_\om}^2 \|u\|_{L^\infty_T H^{\frac{3}{4}+,0}}(1+ \|u\|_{L^2_T L^\infty_{xy}}^2)(1+ \|u\|_{L^2_T L^\infty_{xy}}^2+\|u\|_{L^\infty_T H^{\frac{3}{4}+,0}}^2).
}
\end{proposition}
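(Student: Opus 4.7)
The strategy is a frequency-localized $L^2$-energy estimate at each Littlewood-Paley block $P_N u$, followed by summation with the weights $\om_N^2\jb{N}^{2s}$. For the low-frequency contribution $N\lesssim 1$, integration by parts in $x$ (to transfer the $\partial_x$ off $u^2$) and H\"older in time yield
\[
\|P_{\le 1} u\|_{L^\infty_T L^2_{xy}}^2 \lesssim \|P_{\le 1} u_0\|_{L^2}^2 + T^{\frac12}\|u\|_{L^\infty_T L^2_{xy}}^2\|u\|_{L^2_T L^\infty_{xy}},
\]
producing the second term on the right-hand side of \eqref{estim}.

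For $N\gg 1$, the energy identity reads
\[
\|P_N u(T)\|_{L^2_{xy}}^2 - \|P_N u_0\|_{L^2_{xy}}^2 = -\int_0^T\!\!\int_{\R^2} P_N u\,\partial_x P_N(u^2)\, dx\,dy\,dt.
\]
I multiply by $\om_N^2 N^{2s}$ and sum in $N$. Expanding $u^2$ into Littlewood-Paley pieces reduces matters to trilinear forms $\Pi_{b}(P_{N_1}u,P_{N_2}u,P_{N_3}u)$ with (after a permutation) $N_1\sim N_2\ge N_3$, whose multiplier inherits the symbol $\xi_1\Phi(\xi_1)$, where $\Phi(\xi):=\sum_N \om_N^2 N^{2s}\phi_N(\xi)^2\sim \om_{|\xi|}^2|\xi|^{2s}$. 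A symmetrization over the three inputs, using $\xi_1+\xi_2+\xi_3=0$ and a Taylor expansion, produces the crucial cancellation $|\xi_1\Phi(\xi_1)+\xi_2\Phi(\xi_2)+\xi_3\Phi(\xi_3)|\lesssim N_3\,\om_{N_1}^2 N_1^{2s}$ in the range $N_1\sim N_2\ge N_3$, allowing me to bound this contribution by $N_3\om_{N_1}^2 N_1^{2s}\,|\Pi_a(P_{N_1}u,P_{N_2}u,P_{N_3}u)|$ for an acceptable multiplier $a$ with $\|a\|_{L^\infty}\lesssim 1$.

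Next I invoke Proposition \ref{prop_tri} with $u_1=u_2=u_3=u$, $f_j=u^2/2$ and the splitting $P_{N_3}(u^2)=P_{N_3}(u\cdot P_{\le N_3/4}u)+P_{N_3}(u\cdot P_{>N_3/4}u)$. The short-interval bound of Lemma \ref{lemnew} controls the first term in $L^2_I L^2_{xy}$ on subintervals $I$ of length $|I|\sim N_1^{-1}T$, producing the decisive gain $N_1^{-1/4+}$ traded against the weight $\jb{N_3}^{0+}$; the second term is controlled in $L^{\infty-}_I L^{1+}_{xy}$ via H\"older and Sobolev embedding, both translated into $\|u\|_{L^\infty_T H^{3/4+,0}}$ through Lemma \ref{LemProd}. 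The factors $\|P_{N_i}(u^2)\|_{L^2_T L^2}$ appearing in \eqref{tritri} reduce, by Lemma \ref{LemProd}(3), to $\|u\|_{L^\infty_T L^2}\|u\|_{L^2_T L^\infty_{xy}}$, producing the $(1+\|u\|_{L^2_T L^\infty_{xy}}^2)$ pre\-factor. Combining the multiplier size $N_3\om_{N_1}^2 N_1^{2s}$ with the gains $N_3^{-1/2}$ and $(N_3/N_1)^{1/2}$ from \eqref{tritri}, summing dyadically in $N_3\le N_1$ via $\|P_{N_3}u\|_{L^\infty_T L^2}\lesssim \jb{N_3}^{-(3/4+)}\|u\|_{L^\infty_T H^{3/4+,0}}$, and then in $N_1$ against $\om_{N_1}^2 N_1^{2s}\|P_{N_1}u\|_{L^\infty_T L^2}^2\le \|u\|_{L^\infty_T H^{s,0}_\om}^2$, reproduces the third term of \eqref{estim}. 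The complementary high-high-high regime $N_1\sim N_2\sim N_3$ is easier: the cubic gain $(N_1 N_2 N_3)^{-1/2}$ provided by Proposition \ref{Pro1} absorbs the multiplier immediately.

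The main obstacle is precisely the high-high-low regime $N_3\ll N_1\sim N_2$: the multiplier carries the full weight $\om_{N_1}^2 N_1^{2s}$, and the single factor $N_3$ produced by the symmetrization is not by itself enough to close the $N_3$-summation against $\|P_{N_3}u\|_{L^\infty_T L^2}$. One must exploit the sharp short-interval gain $N_1^{-1/4+}$ of Lemma \ref{lemnew}, which in turn requires the regularity threshold $s>3/4$ (the same phenomenon already forecast in Remark \ref{Rem31}).
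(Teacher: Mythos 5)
Your proposal follows the paper's architecture almost step for step: the same treatment of the low frequencies, the same reduction of the $N\gg1$ part to a trilinear form carrying a gain of the smallest frequency $N_3$, the same application of Proposition \ref{prop_tri} with the splitting $f_{3,1}=$ (low$\times$comparable) and $f_{3,2}=$ (high$\times$high), the same use of Lemma \ref{lemnew} for the decisive $N_1^{-\frac14+}$ gain and of \eqref{prod33} for the $\|P_{N_i}(u^2)\|_{L^2_TL^2}$ factors, and the same dyadic summation. The one point where you genuinely deviate is how the factor $N_3$ is produced: you symmetrize the weighted symbol $\xi\Phi(\xi)$, $\Phi(\xi)\sim\om_{|\xi|}^2|\xi|^{2s}$, over $\xi_1+\xi_2+\xi_3=0$ and Taylor-expand, whereas the paper integrates by parts and invokes the commutator estimate behind the multiplier $a_1$ in \eqref{a1} (and the multiplier $a_2$ for the high-high case). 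The two devices are equivalent; your symmetrized bookkeeping has the mild advantage of treating the paper's cases $A$, $B$, $C$ (in particular the transfer of the weight $\om_N^2\jb{N}^{2s}$ onto the two high frequencies) in one stroke. Two caveats, neither fatal. First, for Proposition \ref{prop_tri} you need the reduced multiplier to be \emph{acceptable} in the sense of \eqref{estcom}, not merely bounded in $L^\infty$: the bound $\|a\|_{L^\infty}\lesssim1$ is what Proposition \ref{Pro1} uses, but the endpoint-interval contributions ($A$ and $B$ in the proof of Proposition \ref{prop_tri}) use \eqref{estcom} itself, so you must check it; writing your symmetrized symbol as $\xi_2\Phi(\xi_2)-(\xi_2+\xi_3)\Phi(\xi_2+\xi_3)$ plus a multiplier in $\xi_3$ alone reduces this to exactly the commutator estimate the paper quotes, so the verification is routine but should be stated. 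Second, your claim that the high-high-high regime is settled ``immediately'' by Proposition \ref{Pro1} is not correct as written: that proposition is phrased in $X^{0,\frac12,1}_\pm$ norms, which an $L^\infty_TH^{s,0}$ solution does not a priori possess on $[0,T]$; this regime must also pass through the short-time-interval machinery of Proposition \ref{prop_tri} (it does, with $N_3\sim N_1$, exactly as in the paper), so the conclusion stands but the justification you give for it does not.
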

\begin{proof}
First we notice that, according to \eqref{esti1}, $ u\in L^2_T L^\infty_{xy} $. 
By the energy estimates, we have
\EQ{
\|u(t)\|_{H^{s,0}_\om}^2\les& \sum_{N} \om_N^2 \jb{N}^{2s} \|P_Nu(t)\|_{L^2}^2\\
\les& \sum_{N}\om_N^2 \jb{N}^{2s}\|P_Nu(0)\|_{L^2}^2+\sum_{N}\om_N^2 \jb{N}^{2s} \int_0^t  \int_{\R^2} \partial_x P_N(u^2) P_N u dsdxdy.
}

For $ N\lesssim 1 $, we have
\EQ{\label{low}
&\Bigl|\sum_{0<N\lesssim 1}\om_N^2 \langle  N\rangle^{2s}\int_0^t  \int_{\R^2} \partial_x P_N(u^2) P_N u dsdxdy\Bigr| \\
\lesssim & T^{1/2} \|P_{\lesssim 1} u\|_{L^\infty_T L^2_{xy}}  \|P_{\lesssim 1} (u^2)\|_{L^2_T L^2_{xy}} 
\sum_{0<N\lesssim 1} N \\
\lesssim& T^{1/2} \|u \|_{L^2_T L^\infty_{xy}} \| u\|_{L^\infty_T L^2_{xy}}^2
}
which is acceptable for $ s>2/3$ thanks to \eqref{esti1}.

For $ N\gg 1 $ we have the two following contributions denoted by $A$ and $B$, where
\EQ{
A=& \sum_{N \gg  1}\om_N^2 \langle  N\rangle^{2s}\int_0^t  \int_{\R^2} \partial_x P_N(u P_{\ll N} u ) P_N u \, dsdxdy\\
 =&  \sum_{N \gg  1}\sum_{0<N_3\ll N}\om_N^2  \langle  N\rangle^{2s}\Bigl( -\int_0^t \int_{\R^2} \partial_x P_{N_3}u  (P_N u)^2\, dsdxdy\\
 &+\int_0^t \int_{\R^2} [\partial_x P_N, P_{N_3}u] \tilde{P}_N u \cdot P_N u  dsdxdy\Bigr) \\
 = &  \sum_{N \gg  1}\sum_{0<N_3\ll N} \om_N^2  \langle  N\rangle^{2s} N_3 \int_0^t \int_{\R^2} \Lambda_{a_1}(P_{N_3}u, \tilde{P}_N u) \cdot P_N u  \, dsdxdy \label{intep}
}
with 
\begin{equation}\label{a1}
a_1(\xi_1,\xi_2)=N_3^{-1}\phi_{N_3}(\xi_1)\tilde \phi_N(\xi_2)[\phi_N(\xi_1+\xi_2)(\xi_1+\xi_2)-\phi_N(\xi_2)\xi_2] \; ,
\end{equation}
 and
\EQ{
B= &  \sum_{N \gg 1}\sum_{N_1\gtrsim N}\om_N^2  \langle  N\rangle^{2s}\int_0^t  \int_{\R^2} \partial_x P_N( \tilde{P}_{N_1}u P_{N_1} u) P_N u dsdxdy\\
= &  \sum_{N_1 \gg 1}\sum_{1\ll N \lesssim N_1} \om_N^2 \langle  N\rangle^{2s} N \int_0^t  \int_{\R^2}\Lambda_{a_2}(P_N u,\tilde{P}_{N_1}  u) P_{N_1} u  dsdxdy\\
\lesssim & \sum_{N_1 \gg 1}\sum_{1\ll N \lesssim N_1}\om_{N_1}^2  \langle  N_1\rangle^{2s} N \int_0^t  \int_{\R^2}    \Lambda_{a_2}(P_N u, \tilde{P}_{N_1}u) P_{N_1} u \, dsdxdy
}
with $a_2(\xi,\xi_1)=\frac{\xi}{N}\phi_{N}(\xi)$.  It is direct to check that $\|a_2\|_{L^\infty(\R^2)} \lesssim 1 $ and that $ a_2 $ satisfies \eqref{estcom}. Moreover, it is well-known (see for instance \cite{KoTz}) that, for $ N_3\ll N$, the following commutator estimate holds
$$
\|\Lambda_{a_1}(g,h)\|_{L^2_x}=N_3^{-1}  \|[\partial_x P_N, P_{N_3}g] \tilde{P}_N h\|_{L^2_x}\lesssim \|g\|_{L^\infty_x} \|h\|_{L^2_x} \;, \quad \forall (g,h)\in L^\infty(\R) \times L^2(\R) ,
$$
and by the mean value theorem, 
$$
\|a_1(\xi_1,\xi_2)\|_{L^\infty(\R^2}\lesssim N_3^{-1} \phi_{N_3}(\xi_1) |\xi_1| \sup_{\R} \Bigl(\phi_N+|\phi'(\frac{\cdot}{N})(\frac{\cdot}{N})| \Bigr)\lesssim 1 \; .
$$
Therefore  it  suffices to bound
\begin{equation}\label{DefC}
C=\sum_{N \gg  1}\sum_{0<N_3\lesssim N}\om_N^2 \langle  N\rangle^{2s} N_3 \int_0^t \int_{\R^2} \Lambda_a( P_{N_3}u,  \tilde{P}_N u) P_N u dsdxdy
\end{equation}
for $ a\in L^\infty(\R^2) $ with $ \|a\|_{L^\infty(\R^2)}\lesssim 1$ satisfying \eqref{estcom}. 
To bound  $C $ we  notice that 
$$ P_{N_3} (u^2) = P_{N_3} \Bigl(2  P_{\ll N_3}u P_{\sim N_3} u + \sum_{N_4\gtrsim N_3} P_{N_4} u P_{\sim N_4} u \Bigr) $$  and apply   \eqref{tritri} with $ f_{3,1}= 2 P_{\ll N_3}u P_{\sim N_3} u $ and $ f_{3,2}= \sum_{N_4\gtrsim N_3} P_{N_4} u P_{\sim N_4}u $  to get 
\EQN{
|C|\lesssim &  \sum_{N \gg  1}\sum_{0<N_3\ll N} 
\Bigl( T^{-1}\|\tilde{P}_{N} u\|_{L_T^2 H^{s,0}_\om}^2+  \Bigl(\frac{N_3}{N}\Bigr)
 \| \tilde{P}_{N} u\|_{L_T^\infty  H^{s,0}_\om}^2+\|\tilde{P}_{N} (u^2)\|_{L_T^2 H^{s,0}_\om}^2\Bigr)N_3^{1\over 2} \\
 &\times  \Bigl[ \|P_{N_3} u \|_{L^\infty_T H^s_{xy}}  \\
 &+    ( \frac{N_3}{N})^{\frac{1}{2}}N_3^{\frac{1}{2}}\cro{N_3}^{0+}
\sup_{I\subset ]0,T[ \atop |I|\lesssim N^{-1}}  \Bigl( \|P_{\ll N_3} u P_{\sim N_3} u  \|_{L^{2}_I L^2_{xy}} 
+ \sum_{N_4\gtrsim N_3} \| P_{N_4} u P_{\sim N_4}u\|_{L^{\infty-}_I L^{1+}_{xy}} \Bigr)  \Bigr].
}
We first notice that for $ N_3\ll N $ it holds  $(N_3/N)\le N^{-(0+)} N_3^{0+}$ 
 and that  H\"older  inequalities lead to 
\EQ{
N_4^{\frac{1}{2}} \|P_{N_4} u \|_{L^{\infty-}_I L^{2+}_{xy}} & \lesssim  N_4^{\frac{1}{2}}\|P_{N_4} u \|_{L^{\infty}_I L^2_{xy}}^{1-} \|P_{N_4} u \|_{L^{2}_I L^\infty_{xy}}^{0+} \\
& \lesssim 1+\cro{N_4}^{\frac{1}{2}+}\|P_{N_4} u \|_{L^{\infty}_I L^2_{xy}}(1+  \|P_{N_4} u \|_{L^{2}_I L^\infty_{xy}}) \label{311}  \; .
}
Hence, 
\EQ{
N_3  \sum_{N_4\gtrsim N_3} \cro{N_3}^{0+} \| P_{N_4} u P_{\sim N_4}u\|_{L^{\infty-}_I L^{1+}_{xy}} \lesssim & 
   \sum_{N_4\gtrsim N_3} \cro{N_4}^{\frac{1}{2}+}  \|  P_{N_4}u\|_{L^\infty_I L^2_{xy}} \cro{N_4}^{\frac{1}{2}}\|P_{N_4} u\|_{L^{\infty-}_I L^{2+}_{xy}}  \\
   \lesssim & \hspace*{2mm} \| u\|_{L^\infty_I H^{\frac{1}{2}+}_{xy}}\Bigl( 1+\| u\|_{L^\infty_I H^{\frac{1}{2}+}_{xy}}(1+\|u\|_{L^2_I L^\infty_{xy}})\Bigr) \label{31}\; .
}
On the other hand,   Lemma \ref{lemnew} leads to 
\EQ{\label{32}
&\cro{N_3}^{0+}N_3  \sup_{I\subset ]0,T[ \atop |I|\lesssim N^{-1}}  \|P_{\ll N_3} u P_{\sim N_3} u  \|_{L^{2}_I L^2_{xy}} \\
\lesssim& \cro{N_3}^{0+}  N_3 \sup_{I\subset ]0,T[ \atop |I|\lesssim N^{-1}}  \|P_{\ll N_3} u \|_{L^2_I L^\infty_{xy}} \|P_{\sim N_3} u  \|_{L^{\infty}_I L^2_{xy}}\\
\lesssim & \Bigl(\frac{N_3}{N}\Bigr)^{\frac{1}{4}-}   \Bigl( \|J_x^{\frac{3}{4}} u \|_{L^\infty_I L^2_{xy}} +\|J_x^{\frac{3}{4}} ( u^2)\|_{L^{2}_I L^2_{xy} }\Bigr) \|u  \|_{L^{\infty}_I H^{\frac{3}{4}+,0}}.
}
Combining the four last estimates we eventually obtain
\EQ{\label{desdes}
|C| \lesssim &  
\Bigl( T^{-1}\|u\|_{L_T^2 H^{s,0}_\om}^2+\|u\|_{L^\infty_T H^{s,0}_\om}^2(1+\|u\|_{L^2_T L^\infty_{xy}}^2)\Bigr)\\
&\times\Bigl(1+  \|u\|_{L^\infty_T H^{\frac{3}{4}+,0}_{xy}}(1+\|u\|_{L^2_T L^\infty_{xy}}) \Bigr)\|u\|_{L^\infty_T H^{\frac{3}{4}+,0}_{xy}}\\
\lesssim &   \| u\|_{L_T^\infty H^{s,0}_\om}^2(1+ \|u\|_{L^2_T L^\infty_{xy}}^2)
\Bigl(1+  \|u\|_{L^\infty_T H^{\frac{3}{4}+,0}_{xy}}(1+\|u\|_{L^2_T L^\infty_{xy}}) \Bigr) \|u\|_{L^\infty_T H^{\frac{3}{4}+,0}_{xy}} 
}
where we made use of \eqref{prod33} to get
\begin{equation}
\|u^2\|_{L_T^2 H^{s,0}_\om}\lesssim \|u\|_{L_T^\infty H^{s,0}_\om} \|u\|_{L^2_T L^\infty_{xy}} \;.\label{der}
\end{equation}
This completes the proof of the proposition.
\end{proof}

\section{Estimates on the difference}
We estimate the difference $w=u_1-u_2$ of two solutions in $ \overline{H}^{s-1,0} $ where 
$$
\|w\|_{\overline{H}^{s-1,0}}\sim \sum_{N>0} \langle N\rangle^{s-1}  \langle N^{-1}\rangle^{3/4}\|P_N w\|_{L^2_{xy}} \; .
$$
Note that $w$ solves the following equation
\begin{eqnarray}\label{eq:kpI-diff}
\begin{cases}
\Bigl( w_t+w_{xxx}+((u_1+u_2)w/2)_x\Bigr)_x +\epsilon w_{yy}=0;\\
w(x,y,0)=w(0).
\end{cases}
\end{eqnarray}
We will  need a slight refinement of  Propositon \ref{prop_tri}. 
\begin{proposition}[Refined Estimate II]\label{propro2}
Under the same hypotheses as in Proposition \ref{prop_tri}  for any decomposition of $  f_i$, $i=1,2$, as 
$f_i= f_{i,1} +   f_{i,2}$ with $ f_{i,1}\in  L^2(]0,T[; L^2(\R^2))$ and $ f_{i,2}\in  L^2(]0,T[; L^{4/3}(\R^2))$ it holds 
\EQ{
&\Bigl|\Pi_a \Bigl(P_{N_1}u_1,  P_{N_2} u_2 , P_{N_3} u_3 \Bigr)  \Bigr| \\
\lesssim& \Bigl(    N_3^{-1/2} \|P_{N_3} u_3 \|_{L^\infty_T L^2_x} +   \Bigl(\frac{N_3}{N_1}\Bigr)^{\frac{1}{2}}\cro{N_3}^{0+} 
\sup_{I\subset ]0,T[ \atop |I|\lesssim N_1^{-1}} \Bigl[ \|P_{N_3} f_{3,1} \|_{L^2_I L^2_{xy}} +  \|P_{N_3} f_{3,2} \|_{L^{\infty-}_I L^{1+}_{xy}}\Bigr]\Bigr)\\
&\times \prod_{i=1}^2 \Bigl( T^{-\frac{1}{2}}\|P_{N_i} u_i\|_{L_T^2 L^2_{xy}}+ \Bigl(\frac{N_3}{N_1}\Bigr)^{\frac{1}{2}}
\| P_{N_i} u_i\|_{L_T^\infty  L^2_{xy}}\\
&\qquad\qquad +T^\frac{1}{2}
\|P_{N_i} f_{i,1} \|_{L_T^2 L^2_{xy}}+ T^\frac{1}{4}N_1^{\frac{1}{4}+} \|P_{N_i} f_{i,2} \|_{L_T^2 L^{4/3}_{xy}}\Bigr) \,. \label{tritri2}
}
\end{proposition}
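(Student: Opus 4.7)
The proof closely follows that of Proposition~\ref{prop_tri}: we retain the same decomposition of $[0,T]$ into sub-intervals $I_j$ of length $\sim T/N_1$, the same splitting into boundary terms $A, B$ and bulk term $C$, and the same substitution of the Duhamel formula \eqref{interval} in two of the three factors inside $C$. Since $A$ and $B$ do not invoke the Duhamel representation of $u_1, u_2$, they remain exactly as in Proposition~\ref{prop_tri} and contribute only via $\|P_{N_i} u_i\|_{L_T^\infty L^2_{xy}}$. The factor $T^{-1/2}\|P_{N_i} u_i\|_{L_T^2 L^2_{xy}}$ coming from $C_1$, and the entire treatment of the $u_3$ factor (with the $f_{3,1}, f_{3,2}$ split), are likewise unchanged.

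The only new step concerns the Duhamel forcing $F_{i,j}(t) = \int_{c_{i,j}}^t U_\pm(t-\tau)\partial_x f_i(\tau)\,d\tau$ entering $C_2$ and $C_3$ for $i=1,2$. We split $f_i = f_{i,1} + f_{i,2}$; the $f_{i,1}$ piece is handled as in Proposition~\ref{prop_tri} via \eqref{linearD2}, producing $T^{1/2}\|P_{N_i} f_{i,1}\|_{L_T^2 L^2_{xy}}$. For $f_{i,2}$, we use \eqref{linearD1} instead, reducing to
\[
\|\eta_j F_{i,j,2}\|_{X^{0,1/2,1}_\pm} \lesssim \|\tilde{\eta}_j \partial_x P_{N_i} f_{i,2}\|_{X^{0,-1/2,1}_\pm}.
\]
The pair $(q,r)=(4,4)$ is admissible with $\beta(4,4)=0$, so the Strichartz estimate \eqref{hom} together with the standard Besov--Bourgain embedding give $\|w\|_{L^4_t L^4_{xy}} \lesssim \|w\|_{X^{0,1/2,1}_\pm}$; duality (up to a $D_x^{0+}$ loss to pass from the $\ell^\infty$ to the $\ell^1$ Besov structure) yields
\[
\|g\|_{X^{0,-1/2,1}_\pm} \lesssim \|D_x^{0+} g\|_{L^{4/3}_t L^{4/3}_{xy}}.
\]
Absorbing $\partial_x$ as the factor $N_1$ and applying H\"older in time on $\tilde I_j$ (of length $\sim T/N_1$) then produces
\[
\|\eta_j F_{i,j,2}\|_{X^{0,1/2,1}_\pm} \lesssim N_1^{1+0+}\, (T/N_1)^{1/4}\, \|\tilde{\eta}_j P_{N_i} f_{i,2}\|_{L^2_t L^{4/3}_{xy}} = T^{1/4}\, N_1^{3/4+}\, \|\tilde{\eta}_j P_{N_i} f_{i,2}\|_{L^2_t L^{4/3}_{xy}}.
\]

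Reassembling as in Proposition~\ref{prop_tri} by Cauchy--Schwarz over $j$ (exploiting the finite overlap $\sum_j \|\tilde{\eta}_j g\|_{L^2_t L^{4/3}_{xy}}^2 \lesssim \|g\|_{L^2_T L^{4/3}_{xy}}^2$) and distributing the $(N_1 N_2 N_3)^{-1/2} \sim N_1^{-1} N_3^{-1/2}$ gain of the trilinear estimate \eqref{triestX1} (with $N_1 \sim N_2$) converts $T^{1/4} N_1^{3/4+}$ into $T^{1/4} N_1^{1/4+}$ per $u_i$ factor. The mixed pairings (one $f_{i,1}$ and one $f_{i',2}$ inside $C_2$, and the Duhamel combinations inside $C_3$) are handled analogously and produce the full four-term product structure of \eqref{tritri2}. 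The main technical subtlety is the dual Besov Strichartz embedding $L^{4/3}_t L^{4/3}_{xy} \hookrightarrow X^{0,-1/2,1}_\pm$, which is closed up to a $D_x^{0+}$ loss exactly as the analogous embedding $L^{2-}_t L^{1+}_{xy} \hookrightarrow X^{0,-1/2,1}_\pm$ used in Proposition~\ref{prop_tri} for the $f_3$ endpoint; this loss accounts for the $+$ in the exponent $N_1^{1/4+}$.
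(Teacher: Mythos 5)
Your proposal is correct and takes essentially the same route as the paper: it leaves $A$, $B$, $C_1$ and the $u_3$ factor untouched and only re-estimates the Duhamel contributions of $f_{i,2}$ in $C_2$, $C_3$ via the dual $L^4_tL^4_{xy}$ Strichartz bound $\|g\|_{X^{0,-1/2,1}_\pm}\lesssim \|D_x^{0+}g\|_{L^{4/3}_tL^{4/3}_{xy}}$ together with H\"older in time on the length-$\sim T/N_1$ intervals, which after distributing the $(N_1N_2N_3)^{-1/2}$ gain yields exactly the extra factor $T^{\frac14}N_1^{\frac14+}\|P_{N_i}f_{i,2}\|_{L^2_TL^{4/3}_{xy}}$ of \eqref{tritri2}. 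The only cosmetic difference is how the dual embedding is justified: the paper derives it from \eqref{hom} with $q=r=4+$ and interpolation (so the $\ell^1$ summation in modulation comes from the strictly subcritical exponent $b<1/2$, rather than from the $D_x^{0+}$ loss itself as your parenthetical suggests), but the resulting inequality is the same.
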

\begin{proof}
We used that according to \eqref{hom} with $ q=4+ $ and $ r=4+ $ and interpolation  it holds
$$
\|f\|_{L^{4}_T L^4_{xy}}\lesssim \|J_x^{0+} f\|_{X^{0,\frac{1}{2}-}_\pm} , \quad \forall f\in C^\infty_0 (\R^3)\; .
$$
Hence for $ N\lesssim N_1$, 
\EQN{
\|\eta_j P_{N}  f\|_{X^{0,-1/2,1}_\pm} \lesssim& N^{0+} \|  \eta_j P_{N}  f\|_{L^{\frac{4}{3}}_t L^\frac{4}{3}_{xy}}\\
\lesssim&  (TN_1^{-1})^\frac{1}{4} N^{0+}   \|\eta_j P_{N} f \|_{L^2_t L^{4/3}_{xy}}, \quad \forall  f\in L^2_t L^{4/3}_{xy}\; .
}
Therefore we may replace $\displaystyle \prod_{i=1}^2\Bigl(\sum_{j\in J_{N_1}}  \| P_{N_i} \eta_j  f_i \|_{L^2_I L^2_{xy}}^2 \Bigr)^{1/2}$ in the estimate on $ C_2 $ in the proof of Proposition \ref{prop_tri} by 
$$
\prod_{i=1}^2 \Bigl[ \Bigl(\sum_{j=1}^{N_1-1} \| \eta_j  P_{N_i}  f_{i,1} \|_{L^2_t L^2_{xy}}^2 \Bigr)^{1/2}+T^{-\frac{1}{4}} N_1^{\frac{1}{4}+}
 \Bigl(\sum_{j=1}^{N_1-1}  \|  \eta_j  P_{N_i} f_{i,2} \|_{L^{2}_t L^{4/3}_{xy}}^2 \Bigr)^{1/2}\Bigr]
$$
$$
\lesssim  \displaystyle \prod_{i=1}^2  \Bigl(\| P_{N_2}   f_{i,1} \|_{L^2_T L^2_{xy}}+
T^{-\frac{1}{4}}N_1^{\frac{1}{4}+}  \|  P_{N_2} f_{i,2} \|_{L^2_T L^\frac{4}{3}_{xy}}\Bigr)
$$
and $C_3 $ can be handled  in the same way. This completes the proof since $ f_1 $ and $ f_2$ are not involved in the estimates on $ C_1$ and $ A$ in the proof of Proposition \ref{prop_tri}.
\end{proof} 

The following proposition proves that the flow-map is Lipschitz in $ \overline{H}^{s-1,0} (\R^2)$. Even if no frequency envelop is needed in this step, we will need the same type of estimates but with a frequency envelop to treat the contribution of $ \partial_x^{-1} u_y $ for the LWP in the energy space. This is the reason why we state this proposition in the  frequency envelop space $ \overline{H}^{s-1,0}_{\om}$.
\begin{proposition}\label{prodif}
 Let $ s>\frac{3}{4} $
 and  $ u_1, u_2\in L^\infty_T H^{s,0}_\om $ be solutions of the KP-I equation then, setting $ w=u_1-u_2 $, it holds 
\EQ{
\|w\|_{L^\infty_T  \overline{H}^{s-1,0}_{\om}}^2
 \lesssim &\|w(0)\|_{ \overline{H}^{s-1,0}_{\om}}^2 + \|w\|_{L^\infty_T \overline{H}^{s-1,0}_{\om}}^2\Bigl(\sum_{i=1}^2(1+  \|u_i\|_{L^\infty_T H^{\frac{3}{4}+,0}_{xy}})(1+
  \|u_i\|_{L^2_T L^\infty_{xy}})  \|u_i\|_{ H^{s,0}_{\om}}\Bigr)\nonumber \\
 &\qquad\qquad \times \Bigl[ 1+ \sum_{i=1}^2 \Bigl( (1+\|u_i\|_{L^2_T L^\infty_{xy}})^2 \|u_i\|_{L^\infty_T H^{\frac{3}{4}+,0}}^2 \\
 &\qquad\qquad \qquad\qquad +\| J_x^{\max(0,s-1)+} u_i \|_{L^{2}_T L^\infty_{xy}}^2 + \|D_x^{s-(\frac{1}{2}+)} u_i\|_{L^4_{T}L^4_{xy}}^2 \Bigr)\Bigr]  .
}

\end{proposition}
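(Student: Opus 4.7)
The plan is to run a weighted Littlewood--Paley energy estimate on the difference equation \eqref{eq:kpI-diff}, in the spirit of the proof of \eqref{estim}. For each dyadic $N>0$, apply $P_N$ to \eqref{eq:kpI-diff}, pair against $P_Nw$, and integrate on $[0,t]\times\R^2$. After standard integrations by parts on the linear terms this yields
\[
\|P_Nw(t)\|_{L^2_{xy}}^2 \le \|P_Nw(0)\|_{L^2_{xy}}^2 + \Bigl|\int_0^t\!\!\int_{\R^2}\partial_x P_N\bigl((u_1+u_2)w\bigr)\,P_Nw\,ds\,dx\,dy\Bigr|.
\]
Multiplying by $\omega_N^2\jb{N}^{2(s-1)}\jb{N^{-1}}^{3/2}$ and summing in $N$ reduces the proposition to controlling this trilinear sum.

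For $N\les 1$, H\"older in time together with the $L^2_TL^\infty_{xy}$ Strichartz bound of Proposition \ref{pro1} on $u_i$ handle the low-frequency contribution exactly as in \eqref{low}. For $N\gg 1$ I decompose
\[
(u_1+u_2)w = P_{\ll N}(u_1+u_2)\,P_{\sim N}w + P_{\sim N}(u_1+u_2)\,P_{\ll N}w + \sum_{N_1\ges N}P_{N_1}(u_1+u_2)\,P_{\sim N_1}w
\]
into its three standard dyadic interaction regimes. In the low-high regime, an integration by parts extracts both the resonant piece $-\tfrac12\int\partial_xP_{\ll N}(u_1+u_2)(P_Nw)^2$ and a commutator piece handled by the symbol $a_1$ of \eqref{a1}, producing integrals of the form $N_3\int_0^t\!\int_{\R^2}\Lambda_a(P_{N_3}(u_1+u_2),\tilde P_Nw)\,P_Nw$ with $a\in L^\infty$ acceptable. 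The other two regimes fit the same template after permutation of the arguments (using the symmetry noted after \eqref{Pia}), so that in each regime I can apply Proposition \ref{propro2} with $u_3$ chosen as the lowest-frequency factor and the two other slots occupied by a KP-I solution $u_i$ (or $w$, which satisfies \eqref{eq:kpI-diff}). When the low-frequency slot corresponds to a $P_{N_3}w$ the associated source is $(u_1+u_2)w/2$, split into a paraproduct piece controlled in $L^2_TL^2_{xy}$ via \eqref{prod33} and a high-high piece controlled in $L^{\infty-}_TL^{1+}_{xy}$ by H\"older together with \eqref{ImprovedStri1}. When the low-frequency slot corresponds to a $P_{N_3}u_i$, the source $u_i^2/2$ is split in the same way; the $L^2_TL^{4/3}_{xy}$-bound on its high-high piece, obtained by H\"older in time and space, is exactly what introduces the Strichartz norm $\|D_x^{s-(1/2+)}u_i\|_{L^4_TL^4_{xy}}$ of \eqref{ImprovedStri1} appearing in the statement.

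The main obstacle is the bookkeeping: after applying Proposition \ref{propro2}, the Cauchy--Schwarz performed on the small-time subintervals (as in the derivation of \eqref{desdes}) must be coupled with a Cauchy--Schwarz in the three dyadic indices $N,N_1,N_3$ in a way compatible with the envelope weight $\omega_N\jb{N}^{s-1}\jb{N^{-1}}^{3/4}$. The acceptability condition $\omega_N\le\omega_{2N}\le\delta\omega_N$ allows one to re-sum the contribution of the high-high regime against $\|u_i\|_{H^{s,0}_\omega}$, while the low-frequency factors re-sum through the lower-regularity norms $\|u_i\|_{L^\infty_TH^{\frac34+,0}}$ and $\|u_i\|_{L^2_TL^\infty_{xy}}$ produced by Proposition \ref{pro1}. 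The additional norm $\|J_x^{\max(0,s-1)+}u_i\|_{L^2_TL^\infty_{xy}}$ visible in the statement arises from the alternative estimate on the paraproduct piece of the difference nonlinearity obtained by combining \eqref{prod1} with the Strichartz bound \eqref{ImprovedStri2}; this route is needed because $w$ is measured in the lower-regularity space $\overline{H}^{s-1,0}_\omega$ and one must avoid moving derivatives onto $w$. Once the bookkeeping is carried out, the desired inequality follows after absorbing one factor of $\|w\|_{L^\infty_T\overline{H}^{s-1,0}_\omega}$ on the left-hand side.
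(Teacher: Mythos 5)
Your high-frequency scheme (paraproduct decomposition of $(u_1+u_2)w$, the commutator symbol $a_1$, and Proposition \ref{propro2} applied with the lowest-frequency factor in the third slot) is essentially the paper's treatment of the terms $A$, $B$, $C$. The genuine gap is in your treatment of the low output frequencies $N\lesssim 1$. You claim this contribution is handled ``exactly as in \eqref{low}'', i.e.\ by the plain energy pairing plus H\"older in time and the $L^2_TL^\infty_{xy}$ bound on the $u_i$. But \eqref{low} rests on $\|P_{\lesssim 1}(u^2)\|_{L^2_{xy}}\le\|u\|_{L^\infty_{xy}}\|u\|_{L^2_{xy}}$, with both factors controlled; for the difference nonlinearity $zw$, $z=u_1+u_2$, the factor $w$ is only controlled in $\overline{H}^{s-1,0}_{\om}$, which for $3/4<s<1$ gives no bound on $\|w\|_{L^2_{xy}}$ (nor on any $L^\infty$-type norm of $w$), and since the projections localize the $x$-frequency only, no Bernstein inequality in $y$ is available to upgrade an $L^1_{xy}$ bound to $L^2_{xy}$. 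In particular the high-high contributions $\sum_{N_1\gg1}P_{N_1}z\,\tilde P_{N_1}w$ to $P_{\lesssim1}(zw)$ can only be placed in $L^1_{xy}$ (Cauchy--Schwarz in space, paying $N_1^{1-s}$ on $z$), and an $L^1_{xy}$ bound on the nonlinearity cannot be fed into the $L^2$ energy pairing against $P_Nw$. This is exactly why the paper abandons the energy method for $N\lesssim 1$ and instead estimates $P_{\lesssim 1}w$ through the Duhamel formula and the non-homogeneous Strichartz estimate \eqref{non-hom} with $(\tilde q,\tilde r)=(2+,\infty)$, so that the nonlinearity is measured in $L^{2-}_TL^{1}_{xy}$ and the derivative loss together with the weight $\langle N^{-1}\rangle^{3/4}$ produce the summable factor $N^{1/4}$; this is estimate \eqref{lowf}, yielding the bound $T^{1/2}\|z\|_{L^\infty_T H^{\frac14+,0}}\|w\|_{L^\infty_T H^{-\frac14,0}}$. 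Without this step (or an equivalent substitute) your low-frequency estimate does not close.

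Two smaller bookkeeping corrections: the norm $\|D_x^{s-(\frac{1}{2}+)}u_i\|_{L^4_TL^4_{xy}}$ does not arise from the $u_i^2$ source of the low-frequency slot (whose high-high part is put in $L^{\infty-}_IL^{1+}_{xy}$ as in \eqref{311}); it enters through the $L^2_TL^{4/3}_{xy}$ control of the high-high part of the sources $f_{i,2}$ attached to the two \emph{high}-frequency slots, which is precisely the refinement of Proposition \ref{propro2} over Proposition \ref{prop_tri} and is needed because those slots are occupied by $w$, whose source $zw$ has no usable $L^2_TL^2_{xy}$ bound. Likewise, the $L^2_IL^2_{xy}$ control of the paraproduct pieces of the low-frequency slot's source relies on Lemma \ref{lemnew} (the $N^{-\frac14}$ gain on intervals of length $\lesssim N^{-1}$), not on \eqref{prod33} or \eqref{ImprovedStri1}.
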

 \begin{proof}
 We set  $z=u_1+u_2 $. To bound the low $x$-frequency part of $ w$ we apply \eqref{non-hom} with $(q_1,r_1)=(\infty,2) $ and $(q_2,r_2)=(2+,\infty) $  on the Duhamel formulation to  get for $ 0<N\lesssim 1$, 
 $$
 \|P_{N} w \|_{L^\infty_T \overline{H}^{s-1,0}}   \lesssim \|P_{N} w(0)\|_{\overline{H}^{s-1,0}}+
 N^{-\frac{3}{4}} N \|J^{0+}_x  P_{\lesssim 1} (zw) \|_{L^{2-}_T L^1_{xy}} \; .
 $$
Then writting 
  \begin{equation}\label{decompzw}
  P_{\lesssim 1} (z w)= P_{\lesssim 1}\Bigl(  \tilde{P}_{\lesssim 1}  z P_{\ll 1} w + P_{\ll 1} z \tilde{P}_{\lesssim 1}  w +
 \sum_{N_1\gtrsim 1} P_{N_1} z \tilde{P}_{N_1} w\Bigr)   \; ,
 \end{equation}
  we  observe that 
 \begin{align*}
 \|J^{0+}_x  P_{\lesssim 1} (zw) \|_{L^{2-}_T L^1_{xy}} & \lesssim \Bigl\| P_{\lesssim 1}\Bigl(  \tilde{P}_{\lesssim 1}  z P_{\ll 1} w + P_{\ll 1} z \tilde{P}_{\lesssim 1}  w +
 \sum_{N_1\gtrsim 1} P_{N_1} z \tilde{P}_{N_1} w\Bigr)\Bigr\|_{L^{2-}_T L^1_{xy}}\\
  & \lesssim T^{1/2} \Bigl(\|  \tilde{P}_{\lesssim 1}  z \|_{L^\infty_T L^2_{xy}} 
\|  \tilde{P}_{\lesssim 1} w \|_{L^\infty_T L^2_{xy}}  \\
&\qquad\qquad +\sum_{N_1\gtrsim 1} \|P_{N_1} z\|_{L^\infty_T L^2_{xy}}  \|\tilde{P}_{N_1} w\|_{L^\infty_T L^2_{xy}}\Bigr) \\
 &  \lesssim T^{1/2} \|z\|_{L^\infty_T H^{\frac{1}{4}+,0}} \|w\|_{L^\infty_T H^{-\frac{1}{4},0}}
 \end{align*}
 that gives, by summing over $ 0<N\lesssim 1 $, 
 \begin{equation}
 \|P_{\lesssim 1} w\|_{L^\infty_T \overline{H}^{s-1,0}}   \lesssim  \| w(0)\|_{\overline{H}^{s-1,0}}+
  T^{1/2} \|z\|_{L^\infty_T H^{\frac{1}{4}+,0}} \|w\|_{L^\infty_T H^{-\frac{1}{4},0}} \label{lowf} \; .
 \end{equation}
 Now for the high $x$-frequency part we make an energy estimate. We have to estimate 
 \begin{equation}
 I=\sum_{N\gg 1}  \langle N  \rangle^{2s-2}   \om_N^2  \Bigl| \int_0^t  \int_{\R^2} \partial_x P_N(z w ) P_N w \Bigr| 
 \end{equation}
 where we may decompose  $P_N(z w) $ as 
\begin{equation}\label{decom}
  P_N(z w)= P_N\Bigl(  \tilde{P}_N z P_{\ll N} w + P_{\ll N} z \tilde{P}_N w +
 \sum_{N_1\gtrsim N} \tilde{P}_{N_1} z P_{N_1} w\Bigr) . 
\end{equation}
 $\bullet $ Contribution of  $\sum_{0<N_3\ll N} \tilde{P}_{N}  w P_{N_3} z $.\\
 We denote by $ A$ this contribution. After integrating by parts, as in \eqref{intep},  we obtain
 $$
A =   \sum_{N \gg  1}\sum_{0<N_3\ll N} \om_N^2 \langle  N\rangle^{2s-2} N_3 \int_0^t \int_{\R^2} \Lambda_{a_1}(P_{N_3}z, \tilde{P}_N w) \cdot P_N w  \,dxdy ds
$$
with  $ a_1$ defined as in \eqref{a1} and thus satifying \eqref{estcom} and $\|a_1\|_{L^\infty}\lesssim 1 $.
 We may thus apply  Proposition \ref{propro2}   with $ f_{i,1} =  \tilde{P}_N z P_{\ll N} w + P_{\ll N} z \tilde{P}_N w
 $ and $ f_{i,2}= \sum_{N_1\gtrsim N} \tilde{P}_{N_1} z P_{N_1}w $ for $i=1,2$, and $ f_{3,1} =2 \sum_{i=1}^2 P_{\ll N_3} u_i P_{\sim N_3} u_i 
 $ and $ f_{3,2}= \sum_{i=1}^2 \sum_{N_4\gtrsim N_3} (P_{N_4} u_i P_{\sim N_4} u_i)  $ 
  to  get 
 \begin{align*}
|A|&\lesssim  \sum_{N\gg  1}  \sum_{0<N_3\ll N} \om_N^2 \langle N  \rangle^{2s-2} N_3
 \Bigl|  \int_0^t  \int_{\R^2}  \Lambda_a(P_{N_3} z,  \tilde{P}_{N} w) P_N w \Bigr| \nonumber\\
& \lesssim  \sum_{N\gg  1}  \sum_{0<N_3\ll N} \om_N^2 \langle N  \rangle^{2s-2}   \Bigl( T^{-1}\|P_{N} w\|_{L^2_{Txy}}^2\nonumber \\
& \quad+ \underbrace{\|P_{\ll N} z P_{\sim N} w  \|_{L^{2}_T L^2_{xy}}^2 +\|P_{\ll N} w P_{\sim N} z  \|_{L^{2}_T L^2_{xy}}^2 
  + (N^{\frac{1}{4}+} \sum_{N_2\gtrsim N}  \| P_{N_2} z P_{\sim N_2}w\|_{L^2_T L^\frac{4}{3}_{xy}})^2}_{A_1}\Bigr)\nonumber \\
  & \quad \times  \Bigl[ N_3^{{1\over 2}}   \|P_{N_3} z \|_{L^\infty_T L^2_x}  
   \\
   &\quad +\underbrace{\Bigl(\frac{N_3}{N}\Bigr)^{\frac{1}{2}}N_3^{\frac{1}{2}+}
\sum_{i=1}^2   \sup_{I\subset ]0,T[ \atop |I|\lesssim N^{-1}}  \Bigl( \|P_{\ll N_3} u_i P_{\sim N_3} u_i   \|_{L^{2}_I L^2_{xy}} 
  + \sum_{N_4\gtrsim N_3} \| P_{N_4} u_i P_{\sim N_4}u_i \|_{L^{\infty-}_I L^{1+}_{xy}} \Bigr)}_{A_2}\Bigr].
\end{align*}
To  estimate the contribution of $ A_1$ we first notice that 
 \begin{align*}
 \sum_{N\gg 1} \om_N^2 N^{2s-2}  \|P_{\ll N} z P_{\sim N} w  \|_{L^{2}_T L^2_{xy}}^2  & =\int_0^T  \sum_{N\gg 1} \om_N^2 N^{2s-2} \|P_{\ll N} z P_{\sim N} w  \|_{L^2_{xy}}^2 \\
 & \lesssim  \int_0^T \sum_{N\gg 1} \|z\|_{L^\infty_{xy}}^2 \| P_{\sim N} w\|_{H^{s-1,0}_\om}^2 \\
& \lesssim  \|z\|_{L^2_T L^\infty_{xy}}^2 \|w\|_{L^\infty_T H^{s-1,0}_\om}^2 \; .
 \end{align*}
 Then, for $ 3/4<s<1$, we notice that 
  \begin{align*}
 \sum_{N\gg 1} \om_N^2 N^{2s-2}\|P_{\ll N} w P_{\sim N} z  \|_{L^{2}_T L^2_{xy}}^2
  & \lesssim \sum_{N\gg 1}\|P_{\ll N} w\|_{L^\infty_T H^{s-1}}^2\| P_{\sim N} z  \|_{L^{2}_T L^\infty_{xy}}^2\\
  & \lesssim 
    \|D_x^{0+}z\|_{L^2_T L^\infty_{xy}}^2 \|  w\|_{L^\infty_T  H^{s-1,0}}^2 \; .
  \end{align*}
 whereas for $ s>1$ we may write 
   \begin{align*}
 \sum_{N\gg 1} \om_N^2 N^{2s-2}\|P_{\ll N} w P_{\sim N} z  \|_{L^{2}_T L^2_{xy}}^2 & \lesssim \sum_{N\gg 1}\|P_{\ll N} w\|_{L^\infty_T L^2_{xy}}^2\,  \om_N^2 \| D^{s-1}_x P_{\sim N} z  \|_{L^{2}_T L^\infty_{xy}}^2 \\
 & \lesssim 
    \|D_x^{(s-1)+}z\|_{L^2_T L^\infty_{xy}}^2 \|  w\|_{L^\infty_T  H^{s-1,0}}^2 \; .
   \end{align*}
  Finally,  for $ s>3/4 $, we can always assume that  $  \om_N^2 \lesssim  N^{0+} $, it  holds
  \begin{align}
   &N_1^{s-1}    \om_N^2 N^{\frac{1}{4}+} \sum_{N_1\gtrsim N}  \| P_{N_1} z P_{\sim N_1}w\|_{L^2_T L^\frac{4}{3}_{xy}} \\
   & \lesssim
  T^{1/2}  N_1^{s-1}  N^{\frac{1}{4}+} \sum_{N_1\gtrsim N} N_1^{-s+\frac{1}{2}+}\|D_x^{s-(\frac{1}{2}+)}P_{N_1} z\|_{L^4_T L^4_{xy}}N_1^{1-s}
  \|P_{\sim N_1}w\|_{L^\infty_T H^{s-1,0}}\nonumber \\
  & \lesssim T^{1/2}  N^{s-1}  N^{\frac{1}{4}+} N^{-2s+\frac{3}{2}+} \|D_x^{s-(\frac{1}{2}+)} z\|_{L^4_{T}L^4_{xy}} \|w\|_{L^\infty_T H^{s-1,0}}\nonumber \\
  & \lesssim T^{1/2} N^{-s+\frac{3}{4}+} \|D_x^{s-(\frac{1}{2}+)} z\|_{L^4_{T}L^4_{xy}} \|w\|_{L^\infty_T H^{s-1,0}}
  \end{align}
  that is acceptable.

 Estimating the contribution of $ A_2 $  exaclty as the similar term in the estimate of $ C $ in \eqref{DefC} we thus get 
  \begin{align}
|A|&\lesssim \Bigl[ \sum_{i=1}^2   \Bigl(1+  \|u_i\|_{L^\infty_T H^{\frac{3}{4}+,0}_{xy}}(1+
  \|u_i\|_{L^2_T L^\infty_{xy}}) \Bigr) \|u_i\|_{L^\infty_T H^{\frac{3}{4}+,0}_{xy}}\Bigr]\nonumber\\
& \hspace*{10mm}\times \Bigl( 1+\| J_x^{\max(0,s-1)+} z \|_{L^{2}_T L^\infty_{xy}}^2 + \|D_x^{s-(\frac{1}{2}+)} z\|_{L^4_{T}L^4_{xy}}^2 \Bigr)
 \|w\|_{L^\infty_T H^{s-1,0}_\om}^2.
  \label{eqr1}
 \end{align}

 $\bullet $ Contribution of   $\sum_{N_1\gtrsim N} P_{N_1} z \tilde{P}_{N_1} w$.\\
  We denote by $ B$ this contribution. In this case we have $ 1\ll N \lesssim N_1$. 
  We use the decomposition \eqref{decom} 
  and apply Proposition \ref{propro2} with $u_1=z$, $ u_2=w$,  $ f_{2,1} = P_{\ll N_1} z P_{\sim N_1} w + P_{\sim N_1} z P_{\ll N_1} w$ and $ f_{2,2}=  \sum_{N_2\gtrsim N_1} P_{N_2} z P_{\sim N_2} w$  , $ u_3 = w$, $ N_3=N$, $ f_{3,1} = P_{\ll N_3} z P_{\sim N_3} w + P_{\sim N_3} z P_{\ll N_3} w$ and $ f_{3,2}=  \sum_{N_4\gtrsim N_3} P_{N_4} z P_{\sim N_4} w$  to get  
  \begin{align*}
|B|&\lesssim  \sum_{N\gg  1} \sum_{N_1\gtrsim N}  \om_N^2 N  \langle N  \rangle^{2s-2}  \Bigl|  \int_0^t  \int_{\R^2}  P_{N_1} z \tilde{P}_{N_1} wP_N w \Bigr| \nonumber\\
& \lesssim  \sum_{N\gg  1} \sum_{N_1\gtrsim N} \Bigl(\frac{N}{N_1}\Bigr)^{2s-1} \om_{N_1} N_1^{s}\Bigl( T^{-\frac{1}{2}}\|P_{N_1} z\|_{L^2_{Txy}}+\|P_{N_1} (u_1^2+u_2^2)\|_{L^2_{Txy}}\Bigr)\nonumber \\
&   \times  \om_{N_1}N_1^{s-1}\Bigl( T^{-\frac{1}{2}}\|P_{N_1} w\|_{L^2_{Txy}}\\
&\quad + \underbrace{\|P_{\ll N_1} z P_{\sim N_1} w  \|_{L^{2}_T L^2_{xy}} +\|P_{\ll N_1} w P_{\sim N_1} z  \|_{L^{2}_T L^2_{xy}} 
  + N_1^{\frac{1}{4}+} \sum_{N_2\gtrsim N_1}  \| P_{N_2} z P_{\sim N_2}w\|_{L^2_T L^\frac{4}{3}_{xy}}}_{B_1}\Bigr) \nonumber \\
  & \hspace*{1mm} \times  \Bigl( \cro{N}^{-{1\over 2}}   \|P_{N} w\|_{L^\infty_T L^2_x}  +    \Bigl( \frac{N}{N_1}\Bigr)^{\frac{1}{2}} N^{0+}\nonumber \\
  &    \hspace*{10mm}\\
  &\times
  \sup_{I\subset ]0,T[ \atop |I|\lesssim N_1^{-1}}  \Bigl(\underbrace{ \|P_{\ll N} z P_{\sim N} w  \|_{L^{2}_I L^2_{xy}} +\|P_{\ll N} w P_{\sim N} z  \|_{L^{2}_I L^2_{xy}} 
  + \sum_{N_4\gtrsim N} \| P_{N_4} z P_{\sim N_4}w\|_{L^{\infty-}_I L^{1+}_{xy}}}_{B_2}\Bigr)\Bigr).\\
  \end{align*}
  The contribution of $ B_1$ can be estimated exactly as the one of $ A_1 $ just above. 
  Let us now estimate one by one  the contribution of  three terms appearing in $B_2$. First, proceeding as in \eqref{31},  it is direct to see that
     $$
  \sum_{N_4\gtrsim N} N^{0+} \| P_{N_4} z P_{\sim N_4}w\|_{L^{\infty-}_I L^{1+}_{xy}}  \lesssim 
  \|w\|_{L^\infty_I H^{-\frac{1}{4},0}} ( 1+\|z\|_{L^\infty_I H^{\frac{1}{4}+}}(1+ \|z\|_{L^2_I L^\infty_{xy}})) \; .
  $$
  Then, according to Lemma \ref{lemnew} and \eqref{der} we have 
  \begin{align*}
 N^{0+} \|P_{\ll N} z P_{\sim N} w  \|_{L^{2}_I L^2_{xy}} & \lesssim  N^{0+}\|P_{\ll N} z \|_{L^2_I L^\infty_{xy}} \|P_{\sim N} w  \|_{L^{\infty}_I L^2_{xy}}\\ 
  & \lesssim   N^{-\frac{1}{4}+}  \Bigl( \|J_x^{\frac{3}{4}} z \|_{L^\infty_I L^2_{xy}} +
  \|J_x^{\frac{3}{4}}  (u_1^2+u_2^2) \|_{L^{2}_I L^2_{xy} } \Bigr)  \|P_{\sim N} w  \|_{L^{\infty}_T L^2_{xy}}\\
   & \lesssim \|w\|_{L^\infty_T H^{-\frac{1}{4}+,0}} \sum_{i=1}^2 (1+\|u_i\|_{L^2_T L^\infty_{xy}}) \|u_i\|_{L^\infty_T H^{\frac{3}{4}+,0}}
  \end{align*} 
  and in the same way
   \begin{align*}
  N^{0+}   \|P_{\sim  N} z P_{\ll N} w  \|_{L^{2}_I L^2_{xy}} & \lesssim  N^{0+}  \|P_{\sim N} z \|_{L^2_I L^\infty_{xy}} \|P_{\ll N} w  \|_{L^{\infty}_I L^2_{xy}}\\ 
  & \lesssim   N^{-\frac{1}{4}+}  \Bigl( \|J_x^{\frac{3}{4}} z \|_{L^\infty_I L^2_{xy}} +
  \|J_x^{\frac{3}{4}}  (u_1^2+u_2^2) \|_{L^{2}_I L^2_{xy} } \Bigr)  \|P_{\ll N} w  \|_{L^{\infty}_T L^2_{xy}}\\
   & \lesssim \|w\|_{L^\infty_T H^{-\frac{1}{4}+,0}} \sum_{i=1}^2 (1+\|u_i\|_{L^2_T L^\infty_{xy}}) \|u_i\|_{L^\infty_T H^{\frac{3}{4}+,0}}.
\end{align*} 
Gathering all these estimates and making use of \eqref{der}  we eventually obtain   
  \begin{align}
  |B|& \lesssim \Bigl( \sum_{i=1}^2   (1+
  \|u_i\|_{L^2_T L^\infty_{xy}})  \|u_i\|_{L^\infty_T H^{s,0}_{\om}}\Bigr) \nonumber \\
  & \hspace*{10mm} \times  \|w\|_{L^\infty_T H^{s-1,0}_\om} \Bigl( 1+\| J_x^{\max(0,s-1)+}z \|_{L^{2}_T L^\infty_{xy}} + \|D_x^{s-(\frac{1}{2}+)} z\|_{L^4_{T}L^4_{xy}} \Bigr)\nonumber \\
   & \hspace*{10mm} \times \|w\|_{L^\infty_T H^{-\frac{1}{4}+,0}}\Bigl( 1+ \sum_{i=1}^2 (1+\|u_i\|_{L^2_T L^\infty_{xy}}) \|u_i\|_{L^\infty_T H^{\frac{3}{4}+,0}}\Bigr) .
\label{eqr2}
 \end{align}
  $\bullet $ Contribution of   $\sum_{0<N_3 \ll N} \tilde{P}_{N} z P_{N_3} w$. \\
    We denote by $ C$ this contribution. The treatment is very similar to the preceding one.   We  apply Proposition \ref{propro2} with $u_1=z$, $ u_2=w$,  $ f_{2,1} = P_{\ll N} z P_{\sim N} w + P_{\sim N} z P_{\ll N} w$ and $ f_{2,2}=  \sum_{N_1\gtrsim N} P_{N_1} z P_{\sim N_1} w$  , $ u_3 = w$, $ N_3=N$, $ f_{3,1} = P_{\ll N_3} z P_{\sim N_3} w + P_{\sim N_3} z P_{\ll N_3} w$ and $ f_{3,2}=  \sum_{N_4\gtrsim N_3} P_{N_4} z P_{\sim N_4} w$  to get  
 \EQ{
|C| 
\lesssim& \sum_{N\gg  1}  \sum_{0<N_3\ll N}  \om_N^2 \langle N  \rangle^{2s-2}  N\Bigl|  \int_0^t  \int_{\R^2}  \tilde{P}_N z  P_{N_3} w  P_N w \Bigr| \\
 \lesssim&  \sum_{N\gg  1}  \sum_{0<N_3\ll N} \om_N N^{s}\Bigl( T^{-\frac{1}{2}}\|P_{N} z\|_{L^2_{Txy}}+\|P_{N} (u_1^2+u_2^2)\|_{L^2_{Txy}}\Bigr)\\
&   \times  \om_N N^{s-1}\Bigl( T^{-\frac{1}{2}}\|P_{N} w\|_{L^2_{Txy}}\\
&+ \underbrace{\|P_{\ll N} z P_{\sim N} w  \|_{L^{2}_T L^2_{xy}} +\|P_{\ll N} w P_{\sim N} z  \|_{L^{2}_T L^2_{xy}} 
  + N^\frac{1}{4} \sum_{N_1\gtrsim N}  \| P_{N_1} z P_{\sim N_1}w\|_{L^2_I L^\frac{4}{3}_{xy}}}_{C_1}\Bigr) \\
  &\times  \Bigl( \cro{N_3}^{-{1\over 2}}   \|P_{N_3} w\|_{L^\infty_T L^2_x}  +( \frac{N_3}{N})^{\frac{1}{2}} N^{0+}_3\\
  &\times  
  \sup_{I\subset ]0,T[ \atop |I|\lesssim N_1^{-1}}  \Bigl(\underbrace{ \|P_{\ll N_3} z P_{\sim N_3} w  \|_{L^{2}_I L^2_{xy}} +\|P_{\ll N_3} w P_{\sim N_3} z  \|_{L^{2}_I L^2_{xy}} 
  + \sum_{N_4\gtrsim N_3}  \| P_{N_4} z P_{\sim N_4}w\|_{L^\infty_I L^1_{xy}}}_{C_2}\Bigr) \Bigr)\quad . \label{eqC}
}
Clearly, the contributions of $C_1$ and $ C_2$ can be estimated exactly as $ A_1 $ and $ B_2 $.  Using that 
$ \displaystyle\sum_{0<N_3\ll N}  N_3^{-{1\over 2}}   \|P_{N_3} w\|_{L^\infty_T L^2_x} \lesssim \|w\|_{L^\infty_T \overline{H}^{-\frac{1}{4},0}} $ we eventually obtain 
   \begin{align}
  |C|& \lesssim \Bigl( \sum_{i=1}^2   (1+
  \|u_i\|_{L^2_T L^\infty_{xy}})  \|u_i\|_{L^\infty_T H^{s,0}_{\om}}\Bigr) \nonumber \\
  & \hspace*{10mm} \times  \|w\|_{L^\infty_T H^{s-1,0}_\om} \Bigl( 1+\| J_x^{\max(0,s-1)+}z \|_{L^{2}_T L^\infty_{xy}} + \|D_x^{s-(\frac{1}{2}+)} z\|_{L^4_{T}L^4_{xy}} \Bigr)\nonumber \\
   & \hspace*{10mm} \times \|w\|_{L^\infty_T \overline{H}^{-\frac{1}{4}+,0}}\Bigl( 1+ \sum_{i=1}^2 (1+\|u_i\|_{L^2_T L^\infty_{xy}}) \|u_i\|_{L^\infty_T H^{\frac{3}{4}+,0}}\Bigr) .
\label{eqr3}
 \end{align}
Gathering \eqref{eqr1}, \eqref{eqr2} and \eqref{eqr3}, the desired estimate follows.
 \end{proof}
 \section{Unconditional local well-posedness in $ H^{s,0} $, $ s>3/4$.}\label{sect7}
We will use the classical dilation argument since our approach does not enable us  to keep a power of $ T $ in our a priori estimates.   Recall that $u\in L^\infty(0,T;H^{s,0})$ is a solution to KP  on $ ]0,T[ $ if and only if, for some $ 0<\lambda\le 1$,  $(t,x,y)\mapsto u_\lambda(t,x,y):=\lambda^{2} u(\lambda^{3}t, \lambda x, \lambda^2 y)\in L^\infty(0, \lambda^{-3} T;H^s)  $ is  a  solution to KP on $ ]0,\lambda^{3} T[ $ and that 
\begin{equation}\label{rescale}
\|u_{\lambda}(t)\|_{H^{s,0}} \lesssim\  \lambda^{1/2} \|u(\lambda^3 t)\|_{H^{s,0}} , \quad \forall 0\le t\le  \lambda^{-3} T \; .\end{equation}
\subsection{Unconditional uniqueness}
Let $ s>3/4$ and let $u_1 $ and $ u_2 $ be two solutions of the KP equation associated to the same initial data $\varphi\in H^{s,0}(\R^2) $ that belong to $ L^\infty(0,T;H^{s,0}(\R^2)) $ for some $ T>0 $. For any given $ 0<\varepsilon<1 $, the above dilation argument ensures that  taking $ \lambda=[\varepsilon^{-1}(1+\max_{i=1,2} \|u_i\|_{L^\infty_T H^{s,0}})]^{-2}$, the associated dilated solutions $u_{i,\lambda} $ satisfy  $ \|u_{i,\lambda}\|_{L^\infty(0,T_\varepsilon;H^{s,0})}\lesssim \varepsilon$, $i=1,2$,
 with 
 $$ T_\varepsilon \sim [\varepsilon^{-1} (1+\max_{i=1,2} \|u_i\|_{L^\infty_T H^{s,0}})]^{6}T  \; .
 $$
 In particular $ T_\varepsilon> 1 $ as soon as $ \varepsilon\ll T^{1/6} $. 
 Noticing that $ (u_1-u_2)(0)=0\in H^{s-1}_{\frac{3}{4}} $, Proposition \ref{prodif}  together with Proposition \ref{pro1}  ensure that, taking $ 0< \varepsilon \ll  T^{1/6} \wedge 1$ small enough, it holds  $ u_{1,\lambda}\equiv u_{2,\lambda} $ on $ [0,1] $. Coming back to the solutions $ u_1$ and $ u_2 $, this forces $ u_1\equiv u_2 $ on $[0, ( \varepsilon^{-1} (1+\max_{i=1,2} \|u_i\|_{L^\infty_T H^{s,0}}))^{-6}] $ and the uniqueness on $ [0,T] $ follows by iterating this argument a finite number of times.
 \subsection{Local Existence and strong continuity}\label{sub72}
 It follows from the  dilation argument that the LWP of KP  in $ H^{s,0}(\R^2) $ for small $H^{\frac{3}{4}+,0}$-initial data with a
 time of existence  $ T\ge 1 $  ensures the LWP
  of KP   for arbitrary large $H^{s,0}$-initial data  $u_0$ with a maximal time of existence
  $$T\gtrsim  (1+\|u_0\|_{\frac{3}{4}+})^{-6}\, .$$
  We can thus restrict ourselves to prove a small initial data LWP result.
   First let  $ u_0\in H^{\infty ,0}(\R^2) $ with $ \|u_0\|_{H^{\frac{3}{4}+,0}}\ll 1 $.  According to  \cite{MoSaTz4} for the KP-I equation and \cite{Bourgain3} for the KP-II equation, there exists a solution $ u\in C([0,T]; H^{\infty,0}(\R^2)) $ of the KP equation emanating from  the initial data $ u_0$ with $ T=T(\|u_0\|_{H^{\frac{3}{2}+,0}}) $. 
 Taking $ s=\frac{3}{4}+$ and $ \omega\equiv 1 $  in \eqref{estim}  it follows  from a direct continuity argument  that there exists $ \varepsilon_0>0 $ such that  if $ \|u_0\|_{H^{\frac{3}{4}+,0}}< \varepsilon_0 $ then the solution $ u$ satisfies $\|u(t)\|_{H^{\frac{3}{4}+,0}} < 2 \varepsilon_0  $  on $[0, \min(T,1)] $. Using again \eqref{estim}  with $ \omega\equiv 1$ we deduce  that $\|u(t)\|_{H^{s,0}} \lesssim \|u_0\|_{H^{s,0}}  $  on $[0, \min(T,1)] $ for $ s>3/4$. The above LWP result then ensures that $ u\in C([0,1];H^{\infty,0}(\R^2)) $ with $ \|u\|_{L^\infty(]0,1[;H^{s,0})} \lesssim \|u_0\|_{H^{s,0}} $ for any $ s>3/4$. 
 
   Now let us fix $u_0\in H^{s,0}(\R^2) $ with $  s>3/4$ and  $ \|u_0\|_{H^{\frac{3}{4}+,0}}< \varepsilon_0 $ .  Setting $ u_{0,n}=P_{\le n} u_0 $, it is clear that the sequence
  $ \{u_{0,n}\}_{n\ge 1} $ is included in $ H^{\infty,0} (\R^2)$ and converges to
$ u_0 $ in $H^{s,0}(\R^2) $. We deduce from  above that the emanating sequence of solutions $\{u_{n}\}_{n\ge 1} $ to the KP  equation is  including in  $  C([0,1];H^{\infty,0}(\R)) $ and satisfies  $ \|u_n\|_{L_T^\infty  H^{s}}^2\le 2  \|u_0\|_{H^{s}}^2$.
   Moreover, since by construction $P_{\le 1} (u_{0,n_1}-u_{0,n_2}) \equiv 0 $ for any $n_1,n_2\ge 1$,  we infer from Propositon \ref{prodif}   that  
\begin{equation}\label{lip}
\|u_{n_1}-u_{n_2} \|_{L^\infty_1 H^{-\frac{1}{4}+,0}_\frac{3}{4}} \lesssim \|u_{0,n_1} -u_{0,n_2}\|_{H^{-\frac{1}{4}+,0}_{\frac{3}{4}}}
\sim  \|u_{0,n_1} -u_{0,n_2}\|_{H^{-\frac{1}{4}+,0}} , \quad n_1, n_2 \ge 1 \; .
\end{equation}
   
   Therefore  $\{u_{n}\}_{n\ge 1} $ is a Cauchy sequence in $ C([0,1]; H^{-\frac{1}{4}+,0}(\R^2)) $ and thus converges to some function $u\in L^\infty(]0,1[;H^{s,0} (\R^2) ) $ strongly in  $ C([0,1]; H^{s',0} (\R^2) ) $ for any $ s'<s$ and in particular in $C([0,1]; L^2(\R^2))) $.We can thus pass to the limit on the nonlinear term $ u_n^2 $ and $ u $ satisfies  \eqref{eq:kpI} at least in the distributional sense.
   Now, coming back to  $ \{u_{0,n}\}_{n\ge 1} $,  we recall that, following \cite{KoTz},  for any sequence $ \{u_{0,k}\}_{k\ge 1} $ converging to $u_0$ in $
 H^{s,0}(\R^2) $
there exists a  dyadic sequence $\{\om_N\}$  satisfying Definition \ref{deftenv} for some $ 1<\delta<1+ $ 
 such that
\begin{equation}\label{env}
\|u_0\|_{H^{s,0}_\om} <\infty , \quad  \sup_{k\ge 1} \|u_{0,k} \|_{H^{s,0}_\om} <\infty \quad \text{and} \quad  \om_N \nearrow +\infty \; .
\end{equation}
Applying again \eqref{estim} with this sequence $ \{\om_N \} $ we obtain that the emanating solutions satisfy 
$$
\sup_{n\ge 1} \|u_n \|_{L^\infty(0,1;H^s_\om)} \lesssim  \sup_{n\ge 1} \|u_{0,n} \|_{H^s_\om} <\infty \, .
$$
Combining this last estimate with \eqref{lip}, we deduce  that actually $\{u_{n}\}_{n\ge 1} $ is a Cauchy sequence in $ C([0,1]; H^{s,0}(\R^2)) $ and thus $u\in C([0,1]; H^{s,0}(\R^2)) $.
    \subsection{Continuity with respect to initial data}
We make use again of the frequency envelop $ \{\omega_N\}$. 
Let $ \{u_{0,k}\}_{k\ge 1}  \subset
H^{s,0}(\R^2)  $, with $\sup_{k\ge 1} \|u_{0,k}\|_{H^{s,0}} \le 2 \|u_0\|_{H^{s,0}}<\epsilon_0 $,  that converges to $ u_0 $ in $  H^{s,0}(\R^2) $ and let  $\{\om_N\}$ satisfying \eqref{env}. Applying \eqref{estim}   with $\{\om_N\}$ we infer that the sequence of solution $\{u_k\}_{k\ge 1} $ emanating from $ \{u_{0,k}\}_{k\ge 1} $ is bounded in
$ L^\infty_1 H^{s,0}_\om $. It thus suffices to prove the convergence of $ u_k $ towards $ u $ in $ L^\infty_1 H^{s',0}$ for some $s'\le s$. However, we can not apply directly \eqref{lip} here since $\{u_0-u_{0,k} \}_{k\ge 1} $ may not belong to $ H_{\frac{3}{4}}^{\frac{3}{4}+}$. Instead, we set $ u_0^n=P_{\le n} u_0 $ and  $ u_{0,k}^n =P_{\le n} u_{0,k} $ and we denote by respectively $u^n$ and $u_k^n $ the associated solutions in $C([0,1]; H^{s,0}(\R^2)$ constructed in Subsection \ref{sub72}. By the triangle inequality, for any $s'\le s $, we have 
$$
\|u-u_k\|_{L^\infty_1 H^{s',0}}\le \|u-u^n\|_{L^\infty_1 H^{s',0}}+\|u^n-u_k^n\|_{L^\infty_1 H^{s',0}}+\|u_k^n-u_k\|_{L^\infty_1 H^{s',0}}\; .
$$
Now, according to the Lipschitz bound \eqref{lip} it holds 
\begin{align}
    \lim_{n\to \infty} \sup_{k\ge 1}
    (\|u-u^n\|_{L^\infty_1 H^{s',0}}
    &+\|u-u^n_k\|_{L^\infty_1 H^{s',0}})\nonumber\\
     &\lesssim \lim_{n\to \infty} \sup_{k\ge 1}(
     \|P_{>n} u_0\|_{H^{s',0}}
     + \|P_{>n} u_{0,k}\|_{H^{s',0}})
     =0\, ,
\end{align}
with $ s'=-\frac{1}{4}+$. It thus remains to prove that for any fixed $ n\ge 1$, 
$$
\lim_{k\to \infty}\|u^n-u_k^n\|_{L^\infty_1 H^{s',0}}=0
$$
that  is a direct consequence of the LWP  of the KP equations in $ H^{2,0}(\R^2) $ (see \cite{MoSaTz4} and \cite{Bourgain3}) since $u_{0}^n$ and $ u_{0_k}^n $ belong to $H^{2,0}(\R^2) $ with $\displaystyle\lim_{n\to\infty}\|u_{0}^n- u_{0_k}^n\|_{H^{2,0}}= 0 $. This  ensures that $\{u_k\}_{k\ge 1}$  converges to $ u $ in $C([0,1];H^{s,0}(\R^2)) $ and proves the continuity with respect to initial data.

 \section{Global Well-posedness  in the energy space for the KP-I equation}\label{sect8}
 In this section we show how we can easily recover the global well-posedness result of the KP-I equation in the energy space.
\subsection{A priori estimate in the energy space}
We define the following anisotropic space constructed on the energy of the KP-I equation :
$$
E^s(\R^2) =\{\phi\in L^2(\R^2), \; \|\phi\|_{E^s}<\infty \} 
$$
with
\begin{equation}
\|\phi\|_{E^{s}}^2= \|\phi\|_{H^{s,0}}^2+ \|\partial^{-1}_x \phi_y\|_{H^{s-1,0}}^2 \; .
\end{equation}
\begin{proposition}\label{pro8}
Let  $0<T<1$, $s\ge 1 $ and  $u\in E^s $ be a solution of the KP-I equation then it holds 
\EQ{\label{equy}
&\|\partial^{-1}_x u_y \|_{L^\infty_T H^{s-1,0}_\om}^2 \\
\lesssim&  \|\partial^{-1}_x u_y(0)\|_{H^{s-1,0}_\om}^2+   \|\partial_x^{-1} u_y \|_{L^\infty_T H^{s-1,0}_\om}^2\Bigl((1+  \|u\|_{L^\infty_T H^{\frac{3}{4}+,0}_{xy}})(1+
  \|u\|_{L^2_T L^\infty_{xy}}) \Bigr) \|u\|_{L^\infty_T H^{s,0}_{\om}} \\
&\times \Bigl[ 1+  \Bigl( (1+\|u\|_{L^2_T L^\infty_{xy}})^2 \|u\|_{L^\infty_T H^{\frac{3}{4}+,0}}^2 +\| J_x^{\max(0,s-1)+} u \|_{L^{2}_T L^\infty_{xy}}^2 + \|D_x^{s-(\frac{1}{2}+)} u\|_{L^4_{T}L^4_{xy}}^2 \Bigr)\Bigr] \; .
}
\end{proposition}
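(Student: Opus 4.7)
The plan is to reduce the claim to an energy estimate for $v:=\partial_x^{-1}u_y$ and then follow the proof of Proposition \ref{prodif} almost verbatim. The key observation is that $v$ satisfies a linearized KP-I equation of exactly the same structural form as the difference equation \eqref{eq:kpI-diff}. Indeed, writing KP-I as $u_t+u_{xxx}+uu_x=\partial_x^{-1}u_{yy}$, applying $\partial_x^{-1}\partial_y$ to both sides, and using the identities $(uu_x)_y=\partial_x(uv_x)$ and $\partial_x^{-2}u_{yyy}=\partial_x^{-1}v_{yy}$, one obtains
\begin{equation*}
(v_t+v_{xxx}+uv_x)_x-v_{yy}=0,
\end{equation*}
which is precisely \eqref{eq:kpI-diff} with $(u_1+u_2)/2$ replaced by $u$ and $w$ replaced by $v$.

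Next I would apply $P_N$, pair against $P_N v$, weight by $\omega_N^2\langle N\rangle^{2(s-1)}$ and sum, and integrate in time. Since $\partial_x^3$ and $\partial_x^{-1}\partial_y^2$ are skew-adjoint on $L^2(\R^2)$, the dispersive terms drop out and the entire nonlinear contribution reduces to bounding
\begin{equation*}
I:=\sum_{N>0}\omega_N^2\langle N\rangle^{2(s-1)}\Bigl|\int_0^t\!\!\int_{\R^2}\partial_x P_N(uv)\cdot P_N v\,dx\,dy\,ds\Bigr|.
\end{equation*}
The range $N\lesssim 1$ is handled directly by H\"older and Bernstein, in the spirit of \eqref{low}--\eqref{lowf}. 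For $N\gg 1$, I would decompose $P_N(uv)$ following \eqref{decom} into high-low (with $u$ at scale $N$), low-high (with $v$ at scale $N$), and high-high interactions ($u,v$ at a common scale $N_1\gtrsim N$); in the high-low case I would commute $\partial_x P_N$ past $P_{N_3}u$ so as to produce the symbol $a_1$ of \eqref{a1}, which satisfies the acceptability condition \eqref{estcom}.

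The core step is then to apply Proposition \ref{propro2} to each of the three pieces with the assignment $u_1:=u$, $u_2:=v$, $u_3:=v$ and forcing terms $f_1:=u^2/2$, $f_2=f_3:=uv$. Each forcing must in turn be split into a high-high-to-low part (absorbed in $L^2_IL^2_{xy}$ via Lemma \ref{lemnew}) and a diagonal part $\sum_{N_4\gtrsim\cdot}P_{N_4}(\cdot)\tilde{P}_{N_4}(\cdot)$ (absorbed in $L^{\infty-}_IL^{1+}_{xy}$ via \eqref{311}), exactly as in the proof of Proposition \ref{prodif}. The three resulting contributions are then estimated by the same manipulations that produced \eqref{eqr1}--\eqref{eqr3}, with the only change being that the slot occupied by $w$ in Proposition \ref{prodif} now carries $v$ and the weight $\langle N\rangle^{s}$ attached to that slot is replaced by $\langle N\rangle^{s-1}$. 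Finally, the product estimate \eqref{prod33} yields, for $s\ge 1$,
\begin{equation*}
\|uv\|_{L^\infty_T H^{s-1,0}_\omega}\lesssim\|u\|_{L^\infty_T H^{s,0}_\omega}\|v\|_{L^2_T L^\infty_{xy}}+\|v\|_{L^\infty_T H^{s-1,0}_\omega}\|u\|_{L^2_T L^\infty_{xy}},
\end{equation*}
which is what is needed to close the estimate in the form \eqref{equy}.

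The main obstacle, which I expect to be purely bookkeeping rather than analytical, is to verify that the one-derivative ``gap'' between $u\in H^{s,0}_\omega$ and $v\in H^{s-1,0}_\omega$ is absorbed at every step in the same way the gap between $z=u_1+u_2\in H^{s,0}_\omega$ and $w\in\overline{H}^{s-1,0}_\omega$ was absorbed in the proof of Proposition \ref{prodif}. Since the dyadic decomposition, the commutator bounds and the applications of Proposition \ref{propro2} are structurally identical, and since the restriction $s\ge 1$ keeps every Sobolev index appearing in the arguments nonnegative, no new analytical difficulty should arise.
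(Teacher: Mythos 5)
Your overall plan is the paper's plan (reduce to the linearized equation for $\vartheta=\partial_x^{-1}u_y$, do a dyadic energy estimate, and feed the pieces into Proposition \ref{propro2} and Lemma \ref{lemnew}), but there is a genuine gap at the very first step: after correctly deriving $(\vartheta_t+\vartheta_{xxx}+u\,\vartheta_x)_x-\vartheta_{yy}=0$, you claim this is ``precisely'' \eqref{eq:kpI-diff} with $w$ replaced by $\vartheta$ and then estimate $\sum_N\om_N^2\langle N\rangle^{2(s-1)}\bigl|\int_0^t\!\int \partial_x P_N(u\vartheta)\,P_N\vartheta\bigr|$. The nonlinearity in the $\vartheta$-equation is $u\,\partial_x\vartheta$, not $\partial_x(u\vartheta)$; the two differ by $u_x\vartheta$, and this difference is not bookkeeping. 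The proof of Proposition \ref{prodif} is carried out in $\overline{H}^{s-1,0}_\om$, whose low-frequency weight $\langle N^{-1}\rangle^{3/4}$ is exactly what absorbs the $N_3^{-1/2}$-type losses in \eqref{tritri2} when the ``$w$-slot'' carries frequencies $N_3\ll 1$ (see the bound $\sum_{0<N_3\ll N}N_3^{-1/2}\|P_{N_3}w\|_{L^2}\lesssim\|w\|_{\overline{H}^{-1/4,0}}$ in the treatment of the term $C$ there). If you transfer that argument verbatim with the nonlinearity written as $\partial_x(u\vartheta)$, you end up needing $\|\partial_x^{-1}u_y\|_{\overline{H}^{s-1,0}_\om}$ (including at $t=0$), and this quantity is \emph{not} controlled by the $E^s$ data: for $u\in E^s$ one only has $\partial_x^{-1}u_y\in H^{s-1,0}$, and $\sum_{N\le 1}N^{-3/4}\|P_N\partial_x^{-1}u_y\|_{L^2}$ is in general infinite. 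The statement you must prove is in $H^{s-1,0}_\om$, so this route does not close.

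The paper's proof avoids this precisely by exploiting the structure $u\,\partial_x\vartheta$: in the decomposition $I_N+I\!I_N+I\!I\!I_N$ the $x$-derivative can always be placed (after an integration by parts for $I_N$) on the factor whose $x$-frequencies are $\lesssim 1$, and $I\!I\!I_N$ involves no such low frequencies at all; this is the remark the paper calls crucial. Concretely, in the low-high piece $I\!I_N=\tilde P_N u\,\partial_x P_{\ll N}\vartheta$ the derivative produces the small factor $N_3$ (hence the replacement ``$\tfrac{N_3}{N}P_{N_3}\partial_x^{-1}u_y$'' in the paper), which cancels the $\langle N_3\rangle^{-1/2}$ loss of \eqref{tritri2} and lets the estimate close in the unweighted $H^{s-1,0}_\om$; whereas your term $\partial_x\bigl(\tilde P_Nu\,P_{N_3}\vartheta\bigr)$ produces the large factor $N$ and leaves the divergent sum $\sum_{N_3\le 1}N_3^{-1/2}\|P_{N_3}\partial_x^{-1}u_y\|_{L^2}$ uncontrolled. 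Relatedly, your forcing assignments $f_2=f_3:=u\vartheta$ in Proposition \ref{propro2} are not the forcings of the equation actually satisfied by $\vartheta$ (which are $\partial_x^{-1}(u\,\partial_x\vartheta)$), and the low-frequency block is treated in the paper via the Duhamel formula and Strichartz (as in \eqref{lowf}), not by a pure H\"older--Bernstein energy bound. To repair the argument you must keep the nonlinearity in the form $u\,\partial_x\vartheta$, perform the paper's three-term decomposition, and track where the derivative lands in each term; the rest of your scheme then goes through as you describe.
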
 
\begin{proof}
We notice that setting $ \vartheta=\partial_x^{-1} u_y $, $\vartheta$  satisfies the equation 
$$
(\vartheta_t + \vartheta_{xxx} +u \vartheta_x )_x - \vartheta_{yy} = 0 
$$
that is an equation  of the same type as the equation satisfied by the difference  $ w$ of two solutions.
Therefore  the estimates on $ \partial_x^{-1} u_y $ are   very close to the one on $ w$ in Proposition \ref{prodif}. 
To bound the low $x$-frequency part of $ \partial_x^{-1} u_y $ we apply directly the Strichartz estimates \eqref{non-hom} on the integral formulation and make use of \eqref{decompzw} to  get 
\EQN{
&\|P_{\lesssim 1} \partial_x^{-1} u_y \|_{L^\infty_T H^{s-1,0}} \\
\lesssim& \|P_{\lesssim 1} w(0)\|_{H^{s-1,0}}+\|J^{0+}_x  P_{\lesssim 1} (u  \partial_x(\partial_x^{-1} u_y) ) \|_{L^{2-}_T L^1_{xy}}\\
\lesssim& \|\partial_x^{-1} u_y(0)\|_{H^{s-1,0}} \\
&+\Bigl\| P_{\lesssim 1}\Bigl(  P_{\lesssim 1}  u \partial_x P_{\ll 1} \partial_x^{-1} u_y + P_{\ll 1} u \partial_x P_{\lesssim 1} \partial_x^{-1} u_y  +
 \sum_{N_1\gtrsim 1} P_{N_1} u \partial_x \tilde{P}_{N_1} \partial_x^{-1} u_y\Bigr)\Bigr\|_{L^{2-}_T L^1_{xy}}.
}
Direct considerations lead to 
\EQN{
&\|P_{\lesssim 1} u \partial_x P_{\ll 1} \partial_x^{-1} u_y\|_{L^{2-}_T L^1_{xy}}+\|P_{\ll 1}u \partial_x P_{\lesssim 1} \partial_x^{-1} u_y\|_{L^{2-}_T L^1_{xy}} \\
\lesssim& T^{1/2}  \|  P_{\lesssim 1}  u \|_{L^\infty_T L^2_{xy}} 
\|P_{\lesssim 1} \partial_x^{-1} u_y \|_{L^\infty_T L^2_{xy}} 
}
whereas by the H\"older, Bernstein and Cauchy-Schwartz inequalities, we observe that 
\EQN{
\Bigl\|  \sum_{N_1\gtrsim 1}  P_{N_1} u \partial_x \tilde{P}_{N_1} \partial_x^{-1} u_y\Bigr\|_{L^1_{xy}} \lesssim &
   \sum_{N_1\gtrsim 1} N_1 \| P_{N_1} u \|_{L^2_{xy}}\|\tilde{P}_{N_1} \partial_x^{-1} u_y\Bigr\|_{L^2_{xy}}\\
   \lesssim& \|u\|_{H^{1,0}} \| \partial_x^{-1} u_y\|_{L^2_{xy}}\; .
}
Gathering the above estimates we infer that 
\begin{align}
 \|P_{\lesssim 1} \partial_x^{-1}&u_y \|_{L^\infty_T H^{s-1,0}}     \lesssim \|\partial_x^{-1} u_y(0)\|_{H^{s-1,0}}+T^{1/2} 
  \|u\|_{L^\infty_T H^{1,0}} \| \partial_x^{-1} u_y \|_{L^\infty_T L^2_{xy}}.
\end{align}

Now for the high $x$-frequency part we make an energy estimate. We have to estimate 
 \begin{equation}
 \sum_{N\gg 1}  \om_N^2 \langle N  \rangle^{2s-2}  \Bigl| \int_0^t  \int_{\R^2}P_N(u \partial_x (\partial_x^{-1} u_y) ) P_N \partial_x^{-1} u_y\Bigr| 
 \end{equation}
where we may decompose  $P_N(u  \partial_x (\partial_x^{-1} u_y)) $ as 
\EQN{
  P_N(u \partial_x(\partial_x^{-1} u_y))  =& P_N\Bigl(  P_{\ll N} u   \partial_x
  \tilde{P}_N  (\partial_x^{-1} u_y )+ \tilde{P}_N u \partial_x P_{\ll N}  (\partial_x^{-1} u_y)\\
&\qquad +
 \sum_{N_1\gtrsim N} \tilde{P}_{N_1} u  \partial_x P_{N_1} (\partial_x^{-1} u_y)\Bigr)  \\
=& I_N + I\! I_N + I\! I \! I_N \; .
}
Note that the contribution of the last term above does not involve functions with $x$-frequencies less than $1$.  Moreover, it is worth noticing that,  by possibly integrating by parts for the first term, we will always be able to put a $x$-derivative on the functions  with $x$-frequencies less than $1$ when estimating the contributions of the first two terms. This remark is actually crucial in view of the weight $ N_3^{-3/4} $ that appears in the right-hand side of \eqref{tritri2}. 

The contribution of  $ I_N $ can be handled exactly as $ A$ in \eqref{eqr1} by replacing $ z$ by  $ u $   and $ w $ by $\partial_x^{-1} u_y$
to get 
\EQN{
&\sum_{N\gg 1} \om_N^2  \langle N  \rangle^{2s-2}  \Bigl| \int_0^t  \int_{\R^2} I_N  P_N \partial_x^{-1} u_y\Bigr| \\
\lesssim& \Bigl[   \Bigl(1+  \|u\|_{L^\infty_T H^{\frac{3}{4}+,0}_{xy}}(1+
\|u\|_{L^2_T L^\infty_{xy}}) \Bigr) \|u\|_{L^\infty_T H^{\frac{3}{4}+,0}_{xy}}\Bigr]\\
& \quad \times \Bigl( 1+\| J_x^{\max(0,s-1)+} u \|_{L^{2}_T L^\infty_{xy}}^2 + \|D_x^{s-(\frac{1}{2}+)} u\|_{L^4_{T}L^4_{xy}}^2\Bigr)
\|\partial_x^{-1} u_y \|_{L^\infty_T H^{s-1,0}_\om}^2.
}
In the same way, the contribution of  $ I\! I_N $ and $ I\! I \! I_N $  can be handled as  respectively $ C $ in \eqref{eqr1},  by  replacing $ \tilde{P}_N z, \; 
P_{N_3} w $  and $P_N w $ by $ \tilde{P}_N u, \;  \frac{N_3}{N} P_{N_3} \partial_x^{-1} u_y $ and $P_N  \partial_x^{-1} u_y$, and  $ B $ in \eqref{eqr2}  by  replacing $ z$ by  $ u$ and $ w$ by $\partial_x^{-1} u_y $ . We then eventually get 
\EQN{
&\sum_{N\gg 1}  \om_N^2 \langle N  \rangle^{2s-2}  \Bigl| \int_0^t  \int_{\R^2} (I\! I_N +  I\! I \! I_N) P_N \partial_x^{-1} u_y\Bigr| \\
\lesssim& \Bigl(   (1+\|u\|_{L^2_T L^\infty_{xy}})  \|u\|_{L^\infty_T H^{s,0}_{\om}}\Bigr) \|\partial_x^{-1}u_y  \|_{L^\infty_T H^{s-1,0}_\om}^2 \\
&  \times  \Bigl( 1+\| J_x^{\max(0,s-1)+}u \|_{L^{2}_T L^\infty_{xy}}^2 + \|D_x^{s-(\frac{1}{2}+)} u\|_{L^4_{T}L^4_{xy}}^2+ (1+\|u\|_{L^2_T L^\infty_{xy}}^2) \|u\|_{L^\infty_T H^{\frac{3}{4}+,0}}^2\Bigr) \; .
\label{eqr33}
}
Then \eqref{equy} follows by gathering the above estimates.
\end{proof} 

\subsection{Well-posedness in $ E^s(\R^2) $ for $s\ge 1$. } Let $ u_0\in E^s(\R^2)$ with $ s\ge 1 $. From the preceding section we know that by possibly rescaling $ u_0 $ there exists a solution $ u\in C([0,1]; H^{s,0}(\R^2)) $ to the KP equation emanating from $ u_0$  that  moreover satisfies 
$$
\|u\|_{L^\infty(0,1;H^{s,0})} \lesssim \|u_0\|_{H^{s,0}} \; .
$$
Setting again $u_{0,n} :=P_n u_0 $ with $ n\ge 1$, we know from the preceding section that  the emanating solution $ u_n \in C([0,1]; H^{\infty,0}(\R^2))$ tends to $ u $ in $C([0,1]; H^{s,0}(\R^2))$. Moreover by  classical energy method (see for instance [\cite{Kenig1}, Lemma 1.3]), one can prove that $u_n\in C(0,T;E^s(\R^2)) $ as soon as
$$
\|u_{n,x}\|_{L^1_T L^\infty_{xy}}  <\infty \; .
$$
Since according to \eqref{ImprovedStri2}, 
$$
\|u_{n,x}\|_{L^1_T L^\infty_{xy}} \lesssim \|u_n\|_{L^\infty_1 H^{2,0}}
$$
it follows that $ u_n\in C([0,1];E^s(\R^2)) $ and by possibly re-scaling again, \eqref{equy} with $\om_N\equiv 1$ leads to 
$$
\|\partial_x^{-1} u_{n,y} \|_{L^\infty_1 H^{s-1,0}} \lesssim \|\partial_x^{-1} \partial_y u_{0,n}\|_{H^{s-1,0}} 
\lesssim \|\partial_x^{-1} \partial_y u_{0}\|_{H^{s-1,0}}\; .
$$
This ensures that $u\in L^\infty(0,1; E^s(\R^2)) $. 

Now, we will need the following anisotropic Sobolev inequality (see \cite{BIN})
\begin{lemma}\label{sobolev}
For $2\leq p\leq 6$ there exists $C>0$ such that for every $u\in
H^{\infty}_{-1}(\R^2)$,
\begin{equation}\label{malak}
\|u\|_{L^p(\R^2)}\leq C\|u\|_{L^2(\R^2)}^{\frac{6-p}{2p}}\,\,
\|u_x\|_{L^2(\R^2)}^{\frac{p-2}{p}}\,\,
\|\partial_x^{-1}u_y\|_{L^2(\R^2)}^{\frac{p-2}{2p}}\,\, .
\end{equation}
\end{lemma}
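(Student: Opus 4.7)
The plan is to establish the sharp endpoint case $p=6$ directly and then interpolate against the trivial case $p=2$ to cover the full range. At $p=6$ the three exponents on the right become $(0,\tfrac{2}{3},\tfrac{1}{3})$ and at $p=2$ all three derivative exponents vanish, so these two cases determine the whole family via H\"older's inequality.

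For the endpoint I would freeze the $y$-variable and apply the one-dimensional Gagliardo--Nirenberg inequality $\|f\|_{L^6_x}^6\lesssim \|f\|_{L^2_x}^4\|f'\|_{L^2_x}^2$ to each slice $x\mapsto u(x,y)$. Integrating in $y$ and pulling $\|u(\cdot,y)\|_{L^2_x}^2$ out in $L^\infty_y$ yields
\begin{equation*}
\|u\|_{L^6(\R^2)}^6 \lesssim \Bigl(\sup_{y\in\R}\|u(\cdot,y)\|_{L^2_x}^2\Bigr)^{2}\,\|u_x\|_{L^2(\R^2)}^{2}.
\end{equation*}
The role of $\partial_x^{-1}u_y$ is then precisely to control the $L^\infty_y L^2_x$-norm. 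Since $u\in H^\infty_{-1}$ decays at infinity, the fundamental theorem of calculus in $y$ gives
\begin{equation*}
\|u(\cdot,y)\|_{L^2_x}^2 \;=\; 2\int_{-\infty}^{y}\!\!\int_\R u(x,y')\,u_{y}(x,y')\,dx\,dy'.
\end{equation*}
The crucial trick is to rewrite $u_y=\partial_x(\partial_x^{-1}u_y)$ and integrate by parts in $x$, replacing $u\,u_y$ by $-u_x\,\partial_x^{-1}u_y$; two successive applications of Cauchy--Schwarz (first in $x$, then in $y$) yield
\begin{equation*}
\sup_{y\in\R}\|u(\cdot,y)\|_{L^2_x}^2 \lesssim \|u_x\|_{L^2(\R^2)}\,\|\partial_x^{-1}u_y\|_{L^2(\R^2)}.
\end{equation*}
Inserting this back produces the endpoint bound $\|u\|_{L^6}^{6}\lesssim \|u_x\|_{L^2}^{4}\|\partial_x^{-1}u_y\|_{L^2}^{2}$, which matches the claimed exponents at $p=6$.

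For general $p\in(2,6)$ I would simply interpolate $\|u\|_{L^p}\le \|u\|_{L^2}^{\theta}\|u\|_{L^6}^{1-\theta}$ with $\theta=(6-p)/(2p)$, which reproduces exactly the three exponents in the statement after substituting the endpoint bound for $\|u\|_{L^6}$. A quick scaling check under the KP-invariant rescaling $u(x,y)\mapsto u(\lambda x,\lambda^2 y)$ confirms the common homogeneity $\lambda^{-3/p}$ on both sides.

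The main obstacle I anticipate is the justification of the boundary behavior $\|u(\cdot,y)\|_{L^2_x}^2\to 0$ as $y\to-\infty$ and of the integration by parts in $x$ without spurious boundary terms, especially because $\partial_x^{-1}$ is a nonlocal operator that is sensitive to the zero $x$-frequency mode. These should be handled by a density argument: approximate $u\in H^\infty_{-1}(\R^2)$ by Schwartz functions whose Fourier transforms vanish in a neighborhood of $\xi=0$ (so that $\partial_x^{-1}u_y$ remains well defined and decaying), verify the inequality for each approximant by the argument above, and pass to the limit in all three norms on the right-hand side.
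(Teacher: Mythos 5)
The paper does not actually prove this lemma: it is quoted as a known anisotropic Sobolev embedding with a citation to Besov--Ilin--Nikolski \cite{BIN}, so there is no in-paper argument to compare against. Your proof is correct and self-contained, and it is essentially the classical argument behind the cited result: the slicewise one-dimensional Gagliardo--Nirenberg bound $\|f\|_{L^6_x}^6\lesssim\|f\|_{L^2_x}^4\|f'\|_{L^2_x}^2$, combined with the key anisotropic ingredient $\sup_y\|u(\cdot,y)\|_{L^2_x}^2\le 2\|u_x\|_{L^2(\R^2)}\|\partial_x^{-1}u_y\|_{L^2(\R^2)}$ obtained from the fundamental theorem of calculus in $y$, the substitution $u_y=\partial_x(\partial_x^{-1}u_y)$, integration by parts in $x$, and two Cauchy--Schwarz applications. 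The exponents check out: at $p=6$ your bound $\|u\|_{L^6}\lesssim\|u_x\|_{L^2}^{2/3}\|\partial_x^{-1}u_y\|_{L^2}^{1/3}$ matches $(0,\tfrac23,\tfrac13)$, and the H\"older interpolation with $\theta=(6-p)/(2p)$ reproduces \eqref{malak} for $2<p<6$; the scaling verification is also consistent. Your closing remarks on justification are apt: for $u\in H^{\infty}_{-1}(\R^2)$ the zero $x$-frequency mode and the vanishing of boundary terms (in $x$ for the integration by parts, and $\|u(\cdot,y)\|_{L^2_x}\to0$ as $y\to-\infty$ for the FTC step) are exactly the points that need the density argument you describe, and approximating by functions whose Fourier transform vanishes near $\xi=0$ and passing to the limit (using Fatou on the left-hand side if needed) handles them. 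What your route buys is an elementary, fully explicit proof that avoids invoking the embedding theorem machinery of \cite{BIN}, at the cost of a paragraph of limiting arguments that the citation sidesteps.
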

This lemma ensures that $u_n \in C([0,1];L^3(\R^2)) $ and that $u_n\to u $ in $  C([0,1];L^3(\R^2))$.
 Moreover, by using for instance an exterior regularization by a sequence of smooth functions as in \cite{MoSaTz4} on can check that 
$$
E(u_n(t))= E(u_{0,n}) ,\; \forall t\in [0,1] \; 
$$
where $ E(\cdot) $ is the energy functional defined in \eqref{laws}.
Since $ \partial_x^{-1}u_{n,y}  \rightharpoonup  \partial_x^{-1} u_y $ in $L^2(\R^2)$, passing to the limit in $ n $ this ensures that 
$
 E(u(t))\le  E(u_{0}) ,\; \forall t\in [0,1] , 
$
and the reversibility with time of the equation ensures that actually 
$$
 E(u(t))= E(u_{0}) ,\; \forall t\in [0,1] \; .
$$
This forces $ t\mapsto \|\partial_x u_y(t)\|_{L^2_{xy}}^2 $ to be continuous and since clearly the map 
$ t\mapsto \partial_x^{-1} u_y $ belongs to $C_w([0,1];L^2(\R^2)) $, the strong continuity of this last  map with values in $ L^2(\R^2) $ follows. Note also that the conservation of the momentum and of the energy of the solution $ u $ 
together with \eqref{malak} ensure that $ u $ is bounded in $E^1(\R^2) $ on its interval of existence. This fact combined with the LWP in $ H^{s,0}(\R^2) $, $ s>3/4$, ensure that actually $u\in C(\R_+; H^{s,0}(\R^2))\cap L^\infty(\R_+; E^s(\R^2)) $ with 
 $\partial_x^{-1} u_y \in  C(\R_+; L^2(\R^2))$. 

Now by using an acceptable  frequency envelop such that 
$$
\|u_{0}\|_{H^{s,0}_\om}^2+\|\partial_x^{-1} u_{0,y}\|_{H^{s-1,0}_\om}^2<\infty\quad  \text{and} \quad \om_N\nearrow +\infty 
$$
\eqref{equy} this times leads, after a possible rescaling, to 
$$
\|\partial_x^{-1} u_{n,y} \|_{L^\infty_T H^{s-1,0}_\om} \lesssim \|\partial_x^{-1} \partial_y u_{0,n}\|_{H^{s-1,0}_\om} 
\lesssim \|\partial_x^{-1} \partial_y u_{0}\|_{H^{s-1,0}_\om}\; .
$$
with $ T>0 $ depending on $\|u_{0}\|_{H^{s,0}_\om}^2+\|\partial_x^{-1} u_{0,y}\|_{H^{s-1,0}_\om}^2$.
 Combining this last estimate with the fact that $\partial_x^{-1} u_y \in  C(\R_+; L^2(\R^2))$, we deduce that 
 $\partial_x^{-1} u_y \in  C([0,T]; H^{s,0}(\R^2))$ and the invariance of the equation with respect to temporal translations enables to conclude that $ u\in C(\R_+;E^s(\R^2)) $. 
 
 It remains to prove   the continuity with respect to initial data  of 
 the map $ u_0 \mapsto \partial_x^{-1} u_y $ from $E^s(\R^2) $ into $ C(\R_+;H^{s-1,0}(\R^2)) $. First  the continuity of the map $u_0\mapsto u $ from $H^{s,0}(\R^2) $ into $C([0,1];H^{s,0}(\R^2)) $ ensures  that for any $ \varphi\in L^2(\R^2) $, $ u_0 \mapsto ( \partial_x^{-1} u_y, \varphi)_{L^2(\R^2)}  $ is continuous from $E^s(\R^2) $ into $C([0,1]) $ and, making use of the conservation of the energy for $ E^s(\R^2) $-solutions together with  \eqref{malak}, that 
 $ u_0\mapsto \|\partial_x u_y\|_{L^2}^2 $  is continuous from $E^s(\R^2) $ into $C([0,1]) $. This ensures that $ u_0 \mapsto \partial_x^{-1} u_y $ is continuous from $E^s(\R^2) $ into $ C(\R_+;L^2(\R^2)) $. To upgrade to the continuity with values in $ C(\R_+;H^{s-1,0}(\R^2)) $ we proceed as above by using an adapted frequency envelop. 
  Let $ \{u_{0,k}\}_{k\ge 1}  \subset
E^s(\R^2)  $, with $\sup_{k\ge 1} \|u_{0,k}\|_{E^s} \le 2 \|u_0\|_{E^s} $,  that converges to $ u_0 $ in $ E^s(\R^2) $
and let  $\{\om_N\}$ satisfying
\EQN{
\|u_{0}\|_{H^{s,0}_\om}^2+\|\partial_x^{-1} u_{0,y}\|_{H^{s-1,0}_\om}^2<&\infty \\
\sup_{k\ge 1}( \|u_{0_k}\|_{H^{s,0}_\om}^2+\|\partial_x^{-1}\partial_y  u_{0,k}\|_{H^{s-1,0}_\om}^2)<&\infty \\
\text{and} \quad \om_N\nearrow +\infty 
}
\eqref{equy}  leads, after a possible rescaling, to 
$$
\|\partial_x^{-1} u_{k,y} \|_{L^\infty_T H^{s-1,0}_\om} \lesssim \|\partial_x^{-1} \partial_y u_{0,k}\|_{H^{s-1,0}_\om} 
<\infty
$$
for some $ T>0 $ and the desired continuity follows on $[0,T] $ and then on $\R_+ $  by invariance by temporal translations of the equation
\section{Global existence for some perturbations of non decaying smooth  solutions}
{
We are looking for  solutions of the $KP$ equation of the form $ V= \psi +v $ with $ \psi $ that satisfies 
$$
(\psi_t + \psi_{3x} + \psi \psi_x)_x +\epsilon \psi_{yy}=g_x \;\text{with } g \in L^\infty_T H^{s,0}
$$
Then $ v$ is solution to 
\begin{equation}\label{eqv}
\Bigl(v_t + v_{3x} + v v_x + (\psi v)_x + g\Bigr)_x +\epsilon v_{yy}=0 
\end{equation}
Therefore $ v $ satisfies the same equation as $ u$ with two supplementary terms $ (\psi v)_x $ and $ g $ that have to be taken into
 account. 
 \subsection{Refined Strichartz estimates and trilinear estimates for \eqref{eqv}}
 We notice that $ v$ satisfies \eqref{eq:kpI} with $ \partial_x(v^2/2)$ replaced by $ (\frac{1}{2} v^2 + \psi v)_x + g $. We thus have to estimate the contribution of $( \psi v)_x + g $ in the refined Strichartz estimates and in the trilinear estimate. We will always put the $ L^2_{xy}$-norm on these two terms.
 We summarize below the changes we have to take into account in the different estimates. \\
$\bullet $ {\it Changes in the Refined Strichartz estimates of Section 3} \\
For the Strichartz estimate we use the  $ L^1_T L^2_{xy} $-norm.  In \eqref{tyty} for the estimate on $ \|P_N v\|_{L^4_T L^4_{xy}}^4 $ this adds the following terms
\begin{align*}
 \sum_{j\in J_N}  &\Bigl( ( TN^{-2})^\frac{3}{4} \Bigl(N\|P_N (\psi v)  \|_{L^4_{I_j}  L^2_{xy}} + \|P_N g  \|_{L^4_{I_j}  L^2_{xy}} \Bigr)\Bigr)^{4} 
 \nonumber\\
& \lesssim  T^3 \Bigl( N^{-2} \|P_N (\psi v)  \|_{L^4_{T}  L^2_{xy}}^4  + N^{-6} \|P_N g  \|_{L^4_{T}  L^2_{xy}}^4 \Bigr)
\end{align*}
that leads, using \eqref{prod3},  for $ s>3/4 $  (in a non optimal way) to 
\EQ{
\|  J_x^{s-(\frac{1}{2}+)} v \|_{L^{4}_T L^4_{xy}}
\lesssim&   \| v \|_{L^\infty_T H^{s,0}} +
 \|J_x^{s-} v^2 \|_{L^{4}_T L^1_{xy} }   \\
&+ T^\frac{3}{4} \Bigl(  \|J_x^{s-\frac{3}{4}} \psi\|_{L^4_T L^\infty_{xy}} \|v \|_{L^\infty_{T}  H^{s-\frac{3}{4}, 0}}+ \|g\|_{L^\infty_T H^{s-2}}\Bigr). \label{ImprovedStri1new}
}
In the same way we have to add the following contribution in the estimate \eqref{toti1} 
\begin{align*}
 \sum_{j\in J_N}  &\Bigl( ( TN^{-\frac{4}{3}})^{\frac{1}{2}+} (N\|P_N (\psi v)  \|_{L^{2+}_{I_j}  L^2_{xy}} + \|P_N g  \|_{L^{2+}_{I_j}  L^2_{xy}} )\Bigr)^{2+} 
 \nonumber\\
& \lesssim  T^{\frac{1}{2}+} \Bigl( N^{\frac{1}{3}-} \|P_N (\psi v)  \|_{L^{2+}_{T}  L^2_{xy}} + N^{-(\frac{2}{3}+)} \|P_N g  \|_{L^{2+}_{T}  L^2_{xy}} \Bigr)^{2+}
\end{align*}
that leads for $s>3/4 $  to 
\EQ{
\|  J_x^{s-(\frac{2}{3}+)} v \|_{L^{2+}_T L^\infty_{xy}}
\lesssim &  \| v \|_{L^\infty_T H^{s,0}} +\| J_x^s  v^2 \|_{L^{2+}_T L^\frac{4}{3}_{xy} } \\
&+ T^\frac{1}{2}\Bigl(  \|J_x^{s-\frac{1}{3}} \psi\|_{L^{2+}_TL^\infty_{xy}} \|v \|_{L^\infty_{T}  H^{s-\frac{1}{3}, 0}}+ \|g\|_{L^\infty_T H^{s-\frac{4}{3}}}\Bigr).   \label{ImprovedStri2new}
}
Finally since in \eqref{estcourt} we already used  the $ L^1_t L^2_{xy} $-norm of $f$ it is direct to check that \eqref{ImprovedStri5} becomes
\EQ{
\|  P_{\lesssim N} v \|_{L^{2}_I L^\infty_{xy}}
 \lesssim & T^{\frac{1}{4}-} N^{-\frac{1}{4}+}  \Bigl( \|J_x^{\frac{3}{4}} v \|_{L^\infty_I L^2_{xy}} +
  \|J_x^{\frac{3}{4}}  v^2 \|_{L^{2}_I L^2_{xy} }\\
  &\qquad + \|J_x^\frac{3}{4} \psi\|_{L^2_T L^\infty_{xy}}\|v\|_{L^\infty_T H^{\frac{3}{4},0}}+ \| g\|_{L^\infty_T H^{-\frac{1}{4},0}}  \Bigr). \label{ImprovedStri5new}
}
$\bullet $ {\it Changes in the trilinear estimates of Section 4} \\
In the trilinear estimate \eqref{tritri} for the high frequencies $ N_1 $ and $N_2$  the supplementary term to add is given by 
$$
T^{1/2}  \Bigl(  \|P_{N_i}(\psi v)\|_{L^2_T L^2_{xy}}+ N_i^{-1} \|P_{N_i} g\|_{L^\infty_T L^2_{xy}} \Bigr) 
$$
that leads, using \eqref{prod1}, to the following supplementary terms in the first parenthesis in the right-hand side of \eqref{desdes} for the a priori estimate on $ v$ :
\begin{equation}\label{95}
T  \Bigl(  \|J^{s+}_x \psi\|_{L^\infty_{Txy}}^2 \|v \|_{L^\infty_T H^{s,0}_\omega}^2+ \|g \|_{L^\infty_T H^{s-1,0}_\omega}^2 \Bigr) \; .
\end{equation}
For the low frequency $ N_3$  the supplementary term is given by 
\begin{align*}
&\Bigl(\frac{N_3}{N_1} \Bigr)^{1/2} \cro{N_3}^{0+} \Bigl( N_3 \|P_{N_3} (\psi v) \|_{L^2_I L^2_{xy}} + \|P_{N_3} g\|_{L^2_I L^2_{xy}}\Bigr) \\
& \lesssim \Bigl(\frac{N_3}{N_1} \Bigr)^{1/2}\cro{N_3}^{0+}(T N^{-1})^\frac{1}{2}  \Bigl( N_3 \|P_{N_3} (\psi v) \|_{L^\infty_T L^2_{xy}} + \|P_{N_3}g\|_{L^\infty_T L^2_{xy}}\Bigr)
\end{align*} 
that  leads to the following supplementary terms in  the second parenthesis in the right-hand side of \eqref{desdes} for the a priori estimate on $ v$ :
\begin{equation}\label{96}
T^\frac{1}{2}  \Bigl(  \|J^{\frac{1}{2}+}_x \psi\|_{L^\infty_{Txy}} \|v \|_{L^\infty_T H^{\frac{1}{2}+,0}}+ \|g\|_{L^\infty_T H^{-\frac{1}{2}+,0}}\Bigr) 
\end{equation}
Finally we notice that  the difference $ w $ of two solutions $ v_1 $ and $ v_2 $  to \eqref{eqv} satisfies 
\begin{equation}\label{eqw2}
\Bigl(w_t + w_{3x} + (zw+\psi w)_x \Bigr)_x +\epsilon w_{yy}=0 
\end{equation}
where $z=v_1+v_2$.
In the trilinear \eqref{tritri2} $ \psi w$ will lead exactly in the same way to the supplementary terms
$T^{1/2}    \|P_{N_i}(\psi w)\|_{L^2_T L^2_{xy}}$ for the high frequencies $ N_1, \, N_2 $ and 
\EQN{
\Bigl(\frac{N_3}{N_1} \Bigr)^{1/2}\cro{N_3}^{0+}(T N^{-1})^\frac{1}{2}   N_3 \|P_{N_3} (\psi w) \|_{L^\infty_T L^2_{xy}} 
}
for the low frequency $N_3$. 
 \subsection{A priori estimates for \eqref{eqv}} We now summarize the changes on the a priori estimates on the solution $ v$ and on the difference of two solutions $ w=v_1-v_2$ to \eqref{eqv}. \vspace*{2mm} \\
$\bullet $ {\it Changes in the $ H^{s,0} $-estimate on $ v $ } \\
For $ N\lesssim 1 $ it directly holds 
$$
\sum_{0<N\lesssim 1} \cro{N}^{2s}\Bigl|\int_0^t \int_{\R^2} P_{N} ((\psi v)_x +g)  P_{N} v \Bigr| \lesssim (\|\psi\|_{L^\infty_{xy}} \|v\|_{L^\infty_T L^2_{xy}}
+ \|g\|_{L^\infty_T L^2_{xy}}) \|v\|_{L^\infty_T L^2_{xy}}.
$$
For $ N\gg 1 $, by commutator estimates we have 
\EQN{
&\sum_{N\gg 1} \om_N^2\cro{N}^{2s} \Bigl|\int_0^t \int_{\R^2} P_{N} ((\psi v)_x+g)  P_{N} v \Bigr|\\
\lesssim& \sum_{N\gg 1} \int_0^t \|P_N g\|_{H^{s,0}_\om} \|P_N v\|_{H^{s,0}_\om}
\\
& + \sum_{N\gg 1}   \int_0^t (\|P_{\ll N} \psi_x\|_{L^\infty_{xy}}  \|P_N v\|_{H^{s,0}_\om}^2+\om_N  N^{s+1}  \|P_{\gtrsim N} \psi\|_{L^\infty_{xy}} \|P_N v\|_{H^{s,0}_\om}
 \|v\|_{L^2_{xy}})\\
& \lesssim T \|J_x^{s+1+} \psi \|_{L^\infty_{Txy}} \|v\|_{L^\infty_T H^{s,0}_\om}^2  +T \|g\|_{L^\infty_T H^{s,0}_\om} \|v\|_{L^\infty_T H^{s,0}_\om}.
}
Combining these last estimates with \eqref{95}-\eqref{96} we eventually get the following $ H^{s,0}_\om$-estimate on $ v$ 
\EQ{\label{estimv}
&\|v\|_{L^\infty_T H^{s,0}_\om}^2\\
 \lesssim & \|v_0\|_{H^{s,0}_\om}^2+ T^{1/2} \|v \|_{L^2_T L^\infty_{xy}} \| v\|_{L^\infty_T L^2_{xy}}^2 +T \|J_x^{s+1+} \psi \|_{L^\infty_{Txy}} \|v\|_{L^\infty_T H^{s,0}_\om}^2 \\
& +T  \|g_x\|_{L^\infty_T H^{s,0}_\om} \|v\|_{L^\infty_T H^{s,0}_\om}+(\| v\|_{L_T^\infty H^{s,0}_\om}^2+\|g\|^2_{H^{s,0}} )(\|v\|_{L^\infty_T H^{\frac{3}{4}+,0}}+
\|g\|_{L^\infty_T L^2_{xy}})\\
&\times(1+ \|v\|_{L^2_T L^\infty_{xy}}^2+ \|J_x^{s+} \psi \|_{L^\infty_{Txy}}^2)(1+ \|v\|_{L^2_T L^\infty_{xy}}^2+\|J_x^{\frac{1}{2}+} \psi \|_{L^\infty_{Txy}}+\|v\|_{L^\infty_T H^{\frac{3}{4}+,0}}^2).
}
$\bullet$ {\it Changes in the $ H^{s-1,0} $-estimate on $ w=v_1-v_2 $ } We proceed exactly in the same way.
For the estimate on the low frequency part of $ w$  we use the Duhamel formulation and apply \eqref{non-hom} with $(q_1,r_1)=(q_2,r_2)=(\infty,2) $.  It is then straightforward to check that \eqref{lowf} can be then replaced by 
\begin{equation}
 \|P_{\lesssim 1} w\|_{L^\infty_T \overline{H}^{s-1,0}}   \lesssim  \| w(0)\|_{\overline{H}^{s-1,0}}+
  T^{1/2} (\|z\|_{L^\infty_T H^{\frac{1}{4}+,0}}+\|J_x^{\frac{1}{4}+}\psi\|_{L^\infty_{Txy}} )\|w\|_{L^\infty_T H^{-\frac{1}{4},0}} \label{lowf2} \; .
 \end{equation}
For $ N\gg 1 $, by commutator estimates we have 
\begin{align*}
&\sum_{N\gg 1} \cro{N}^{2(s-1)} \Bigl|\int_0^t \int_{\R^2} P_{N} (\psi w) \partial_x P_{N} w \Bigr|\\
&\lesssim  \sum_{N\gg 1}   \int_0^t (\|P_{\ll N} \psi_x\|_{L^\infty_{xy}} \|P_N w\|_{H^{s-1,0}}+ N^s  \|J_x^{\frac{3}{4}+} P_{\gtrsim N} \psi\|_{L^\infty_{xy}}
\|P_N w\|_{H^{-\frac{1}{4},0}}) \|P_N w\|_{H^{s-1,0}}\\
& \lesssim T \|J_x^{s+\frac{3}{4}+} \psi \|_{L^\infty_{Txy}} \|w\|_{L^\infty_T H^{s-1,0}}^2 \; .
\end{align*}
This eventually leads to  the following $ \overline{H}^{s-1,0}$-estimate on $ w$ 
\EQN{
 &\|w\|_{L^\infty_T  \overline{H}^{s-1,0}_{\om}}^2\\
 \lesssim &\|w(0)\|_{ \overline{H}^{s-1,0}_{\om}}^2 +
 (1+\|J_x^{\frac{1}{4}+} \psi \|_{L^\infty_{Txy}})\|J_x^{s+\frac{3}{4}+} \psi \|_{L^\infty_{Txy}} \|w\|_{L^\infty_T H^{s-1,0}}^2\\
 &+\Bigl[\Bigl(\sum_{i=1}^2(1+  \|v_i\|_{L^\infty_T H^{\frac{3}{4}+,0}_{xy}})(1+
  \|v_i\|_{L^2_T L^\infty_{xy}}+\|J_x^{s+} \psi\|_{L^\infty_{Txy}}) ( \|v_i\|_{ H^{s,0}_{\om}}+\|g\|_{L^\infty_T H^{s,0})}
  \Bigr)
  \Bigr]\\
 & +\|w\|_{L^\infty_T\overline{H}^{s-1,0}_{\om}}^2 \times \Bigl[ 1+ \|J_x^{s+1} \psi \|_{L^\infty_{Txy}}^2+\sum_{i=1}^2 \Bigl( (1+\|v_i\|_{L^2_T L^\infty_{xy}})^2 \|v_i\|_{L^\infty_T H^{\frac{3}{4}+,0}}^2 \\
 &\qquad \qquad \qquad  \qquad \quad +\| J_x^{\max(0,s-1)+} v_i \|_{L^{2}_T L^\infty_{xy}}^2 + \|D_x^{s-(\frac{1}{2}+)} v_i\|_{L^4_{T}L^4_{xy}}^2 \Bigr)\Bigr] .
}
\subsection{LWP in $ H^{s,0} $ for \eqref{eqv}}

 It follows the same lines as in Section \ref{sect7}. However, we have to rely on a LWP result for smooth solutions to \eqref{eqv}. We  are not aware of such LWP result in the literature but it is well-known that (following for instance \cite{MoSaTz4}) such LWP result can be proven by a standart compactness method in $ H^s(\R^2) $ for $s>2 $ assuming for instance that $ \psi \in L^\infty_T W^{\infty,\infty}(\R^2) $ and 
  $g\in L^\infty_T H^\infty(\R^2)$. Note that these smoothness requirements on $ \psi $ and $ g $ are not problems since by truncating the space frequencies of $ \psi$ above some $N\ge 1$, Hypothesis \ref{hyp1} ensures that they are satisfied for any fix $ N\ge 1$. We may then pass to the limit as $ N\to +\infty$. One other difference is that now  $v\in L^\infty(0,T;H^{s,0})$ is a solution to \eqref{eqv}  with $(\psi, g) $ on $ ]0,T[ $ if and only if, for some $ 0<\lambda\le 1$,  $(t,x,y)\mapsto u_\lambda(t,x,y):=\lambda^{2} u(\lambda^{3}t, \lambda x, \lambda^2 y)\in L^\infty(0, \lambda^{-3} T;H^s)  $ is  a  solution to \eqref{eqv}  on $ ]0,\lambda^{3} T[ $ with $\psi_\lambda(t,x,y)= \lambda^2 \psi(\lambda^3 t,\lambda x,\lambda^2 y) $ and $ g_\lambda(t,x,y)=\lambda^5 g (\lambda^3 t,\lambda x,\lambda^2 y)$.
 We notice that for $ \theta\ge 0 $, 
\begin{equation}\label{rescale2}
\|J_x^{\theta}\psi_{\lambda}(t)\|_{L^\infty_{xy}} \lesssim\  \lambda^{2}
 \|J_x^{\theta} \psi(\lambda^3 t)\|_{L^\infty_{xy}}
\quad \text{and} \quad \|g_\lambda\|_{H^{\theta,0}} \lesssim\  \lambda^{7/2} \|g(\lambda^3 t)\|_{H^{\theta,0}} , \quad \forall 0\le t\le  \lambda^{-3} T \; .\end{equation}
Since we have again positive power of $ \lambda $ the dilation argument still holds as well for the perturbed equation \eqref{eqv} that leads to the LWP result for $ v$ in $ H^{s,0}$, $ s>3/4 $, for $ \psi  $  satisfying Hypothesis \ref{hyp1}. Note that 
 since $ g\in C(\R;H^{s,0})$, on any compact interval $ [0,T] $ we can find a frequency envelop 
 $\{\omega_N\}$ adapted to $ g$ uniformly on  $[0,T]$  that means $ \sup_{t\in [0,T]} \|g\|_{H^{s,0}_\omega} <\infty $. 
\subsection{GWP for a perturbation in $E^s(\R^2)$ in the KP-I case}
As above  we summarize below the contributions of the additional terms in this configuration. \\
$\bullet $ {\it  Changes in the $H^{s-1,0} $-estimate on $ \partial_x^{-1} v_y $ }. \\
We notice that $ \vartheta=\partial_x^{-1} v_y $ satisfies this time 
$$
(\vartheta_t + \vartheta_{xxx} +u \vartheta_x + \psi_y v + \psi \vartheta_x + \partial_x^{-1} g_y )_x -  \vartheta_{yy} = 0 \; .
$$
For $ N\lesssim 1 $ it directly holds 
\EQN{
&\sum_{0<N\lesssim 1} \cro{N}^{2(s-1)}\Bigl|\int_0^t \int_{\R^2} P_{N} \Bigl(\psi_y  v+ \psi \partial_x (\partial_x^{-1} v_y) + \partial_x^{-1} g_y\Bigr)  \partial_x^{-1} P_{N} v_y \Bigr| \\
 \lesssim &(\|\psi_y \|_{L^\infty_{xy}} + \|J_x^{1+} \psi \|_{L^\infty_{xy}}) ( \|v\|_{L^\infty_T L^2_{xy}}^2+ \|\partial_x^{-1} v_y\|_{L^\infty_T L^2_{xy}}^2)\\
 &+ \|\partial_x^{-1} g_y\|_{L^\infty_T L^2_{xy}} \|\partial_x^{-1} v_y\|_{L^\infty_T L^2_{xy}}
}
For $ N\gg 1 $, by commutator estimates we have 
\EQN{
&\sum_{N\gg 1} \cro{N}^{2(s-1)} \Bigl|\int_0^t \int_{\R^2} P_{N}  \Bigl(\psi_y  v+ \psi \partial_x (\partial_x^{-1} v_y) + \partial_x^{-1} g_y\Bigr)  \partial_x^{-1} P_{N} v_y \Bigr| \\
\lesssim &(\|J_x^{(s-1)+}\psi_y \|_{L^\infty_{xy}} + \|J_x^{(s-1)+} \psi_x \|_{L^\infty_{xy}}) ( \|v\|_{L^\infty_T H^{s,0}}^2+ \|\partial_x^{-1} v_y\|_{L^\infty_T H^{s,0}}^2)\\
&+ \|\partial_x^{-1} g_y\|_{L^\infty_T H^{s,0}} \|\partial_x^{-1} v_y\|_{L^\infty_T H^{s,0}} \; .
}
 This shows that, under Hypothesis \ref{hyp2}, we may obtain an acceptable a priori estimate on $\|\partial_x^{-1} v_y \|_{H^{s-1,0}}^2$ as in Proposition \ref{pro8}.\\
$\bullet $  {\it Changes in the estimate on the Energy} \\
 Since we 	are now working with a finite energy solution $ v$ , \eqref{eqv} may be replaced by its ``integrated" version 
 \begin{equation} \label{intequ}
 v_t + v_{3x} + v v_x +\frac{1}{2} (\psi v)_x + g- \partial_x^{-1} v_{yy}=0 \; .
\end{equation}
 First taking the $ L^2$-scalar product of \eqref{intequ} with $ v$ we get
  \begin{equation}
 \frac{d}{dt} \|v\|_{L^2}^2\lesssim \|\psi_x \|_{L^\infty_{xy}} \|v\|_{L^2}^2 + \|g\|_{L^2_{xy}} \|v\|_{L^2} \; .
 \end{equation}
 that proves that 
 \begin{equation} \label{estL2}
 \|v(t)\|_{L^2_{xy}}^2 \lesssim (\|v_0\|_{L^2_{xy}}^2+T\|g\|_{L^\infty_T L^2_{xy}}) \exp \Bigl( (\|\psi_x \|_{L^\infty_T L^\infty_{xy}}+\|g\|_{L^\infty_T L^2_{xy}}) \, t  \Bigr) 
 \end{equation}
 Then we estimate the time derivative of the energy. Formally we take the  $ L^2$-scalar product of \eqref{intequ} with $ v_{xx} + \partial_x^{-2} v_{yy} + v^2$.
  To justify rigorously the calculus we may proceed as in \cite{MoSaTz4} by performing an exterior regularization of
\eqref{intequ} by a sequence of smooth functions $\varphi$ that
cut the low and the high $x$- frequencies. We do not want to enter into these details here and proceed as all calculus were fully  justified for our solutions.

  Using that the energy is conserved for smooth solutions of the KP-I  equation, the only contributions in the derivative of the energy of $ v$ will come from the supplementary terms $ (\psi v)_x $ and $ g$. We thus have only to control the contributions $ A$ and $ B $ of respectively $ g$ and $ \partial_x( \psi v ) $. \\
Obviously,  it holds 
\begin{align*} 
|A| &  \lesssim (\|g_x\|_{L^2_{xy}}+ \|\partial_x^{-1}g_y\|_{L^2_{xy}}) \|v\|_{ L^2_{xy}}+\|g\|_{L^2_{xy}}\|v\|_{L^4_{xy}}^2\; .
\end{align*}
whereas by integration by parts  
\begin{align*} 
|B| & \lesssim (\|\psi_{xx} \|_{L^\infty_{xy}}+ \|\psi_{x} \|_{L^\infty_{xy}}+\|\psi_{y} \|_{L^\infty_{xy}})\|v\|_{E^1}^2 +  \|\psi_x\|_{L^\infty_{xy}} \|v\|_{L^3_{xy}}^3 \\
\end{align*}
with, in view of  \eqref{malak}, $ \|v\|_{L^4_{xy}}^2\lesssim \|v\|_{E^1} $ and 
$$
 \|v\|_{L^3_{xy}}^3\lesssim \|v\|_{L^2_{xy}}^{3/2}  \|v_x\|_{L^2_{xy}}  \|\partial_x^{-1} v_y\|_{L^2_{xy}}^{1/2} \lesssim
  \max\Bigl(  \|v\|_{L^2_{xy}} \|v\|_{E^1}^2, \frac{1}{2} \|v\|_{E^1}^2+ C \|v\|_{L^2_{xy}}^6 \Bigr) \; .
$$
Therefore $ \|v\|_{E^1} $ satisfies the following integral inequality 
\EQN{
\|v(t)\|_{E^1}^2 \lesssim& E(v_0) +\|v(t)\|_{L^2_{xy}}^6+\int_0^t \|g(\tau)\|_{E^1} \, d\tau\\
&+ \int_0^t \Bigl(\|g(\tau)\|_{E^1}+\| \psi_{2x}(\tau) \|_{L^\infty_{xy}}+\| \psi_x(\tau) \|_{L^\infty_{xy}}+ \| \psi_y(\tau) \|_{L^\infty_{xy}}\\
&\qquad +\|\psi_x(\tau)\|_{L^\infty_{xy}}\|v(\tau)\|_{L^2_{xy}}
\Bigr) \|v(\tau)\|_{E^1}^2\, d\tau
}
that proves that $t\mapsto  \|v(t)\|_{E^1}$ 
 is bounded on bounded intervals by Gronwall Lemma and lead to the global existence result. Finally it remains to prove the continuity of  $ t\mapsto  \partial_x^{-1} v_y $ with values in $ H^{s-1,0}(\R^2) $ as well as the continuity of
 the map $ v_0 \mapsto \partial_x^{-1} v_y $ from $E^s(\R^2) $ into $ C([0,T];H^{s-1,0}(\R^2)) $ for $ T>0 $. Actually this follows from the same type of arguments as in Section \ref{sect8} but by replacing the conservation of the energy by the fact that $t\mapsto E(v(t)) $ is continuous (see \cite{MoSaTz4}, Section 7] for similar considerations).

\section*{Acknowlegdements}
The authors would like to thank J.-C. Saut for having informed them  about  some exact solutions to the KP-II equation. They are grateful to the anonymous Referees for valuable remarks and suggestions that improved this manuscript. 
\section*{Funding Declaration}
Z. Guo is supported by ARC FT230100588 from Australian Research Council.


\begin{thebibliography}{99}

\bibitem{Inter}
\newblock J. Bergh and J. L\"ofstr\"om, 
\newblock \emph{Interpolation Spaces}, 
\newblock Springer, 1976.

\bibitem{BIN} 
\newblock O. Besov, V. Ilin and S. Nikolski, 
\newblock \emph{Integral representation of functions and embedding theorems}, 
\newblock J. Wiley, 1978.

\bibitem{CIKS} 
\newblock J. Colliander, A. D. Ionescu, C. E. Kenig and  G. Staffilani,
\newblock \emph{Weighted low-regularity solutions of the KP-I initial-value problem},
\newblock Discrete and Continuous Dynamical Systems, 2008, \textbf{20}(2): 219-258. doi: 10.3934/dcds.2008.20.219







\bibitem {Bourgain3} J.~Bourgain,
\newblock\emph{ On the Cauchy problem for the Kadomtsev-Petviashvili equation},
\newblock GAFA {\bf 3} (1993), 315-341.



\bibitem{Ga} 
\newblock  C. Gallo,
\newblock \emph{Korteweg-de Vries and Benjamin-Ono equations on Zhidkov spaces}, 
\newblock Adv. Differential Equations \textbf{10} (2005), no. 3, 277-308.


\bibitem{ginibre} J. Ginibre,
\newblock \emph{Le probl\`eme de Cauchy pour des EDP semi-lin\'eaires p\'eriodiques en variables d'espace
(d'apr\`es Bourgain),}
\newblock Ast\'erisque, \textbf{237} (1996), 163--187.
%
\bibitem{GKO}
\newblock Z. Guo, S. Kwon and  T. Oh, 
\newblock \emph{Poincar\'e-Dulac normal form reduction for unconditional well-posedness of
the periodic cubic NLS}, 
\newblock Comm. Math. Phys., \textbf{322} (2013), 19–48

\bibitem{GPW}
\newblock Z. Guo, L. Peng and B. Wang, 
\newblock \emph{Decay estimates for a class of wave equations}, 
\newblock Journal of Functional
Analysis, {\bf 254} (6) (2008), 1642--1660.

\bibitem{Guo} 
\newblock  Z. Guo, L. Peng and B. Wang, 
\newblock \emph{On the local regularity of the KP-I equation in anisotropic Sobolev space}
\newblock, J. Math. Pures Appl. \textbf{94} (2010) 414–432.
\bibitem{Guo2} 
\newblock  Z. Guo,
\newblock \emph{Remark on the low regularity well-posedness of the KP-I equation}
\newblock Submitted (ArXiv:2408.14932).
\bibitem{Hadac} 
\newblock M. Hadac,
\newblock \emph{Well-posedness for the Kadomtsev–Petviashvili II equation and generalisations}, 
\newblock  Trans. Amer. Math. Soc. \textbf{360} (2008) 6555–6572.


\bibitem{HHK} 
\newblock M. Hadac, S. Herr and  H. Koch, 
\newblock \emph{Well-posedness and scattering for the KP-II equation in a critical space}, 
\newblock Ann. Inst. H. Poincaré Anal. Non
Linéaire \textbf{26} (3) (May–June 2009) 917–941.

\bibitem{IKT}
\newblock A. D.  Ionescu, C. Kenig and D. Tataru,
\newblock \emph{Global well-posedness of the initial value problem 
for the KP I equation in the energy space},
\newblock Invent. Math. {\bf 173} 2  (2008), 265-304.
%

 \bibitem{Kato}
 \newblock T. Kato,
 \newblock \emph{On nonlinear Schr\"odinger equations II. $H^s$-solutions and unconditional well-posedness,} \newblock J. Anal. Math., \textbf{67} (1995), 281--306.


\bibitem{Kenig1}
\newblock C. E. Kenig,
\newblock \emph{On the local and global well-posedness theory for the KP-I equation,}
\newblock Annales de l’IHP (C) Non Linear Analysis, \textbf{21}(6) (2004), 827--838.

\bibitem{KSS}
\newblock S. Kinoshita, A. Sanwal and R. Schippa,
\newblock \emph{Improved well-posedness for quasilinear and sharp local well-posedness for semilinear KP-I equations,}
\newblock Preprint (ArXiv:2408.16348) .

\bibitem{KS}
 \newblock C. Klein and J.C. Saut, 
\newblock   \emph{Nonlinear Dispersive Equations}, 
\newblock Applied Mathematical Sciences \textbf{209}, Springer-Verlag (2021).


    
 \bibitem{KoTz}
 \newblock H. Koch and N. Tzvetkov,
\newblock \emph{Local well-posedness of the Benjamin-Ono equation in $H^s(\mathbb R)$,}
\newblock Int. Math. Res. Not., \textbf{26} (2003), 1449--1464.






\bibitem{Kodama}
\newblock Y. Kodama,
\newblock \emph{KP solitons in shallow water,}
\newblock J. Phys. A: Math. Theor. \textbf{43} (2010), 43004.

\bibitem{Kodama2}
\newblock Y. Kodama,
\newblock \emph{Solitons in two-dimensional shallow water,}
\newblock CBMS-NSF Egional Conference Series in Applied Mathematics \textbf{92} (2018), SIAM.

\bibitem{Krichever}
\newblock I.M. Krichever,
\newblock \emph{Rational solutions of the Kadomtsev-Petviashvili equation and the integrable systems of $N $ particles on a line (Russian),}
\newblock Funkcional Anal. i Prilozhen \textbf{12} (1978), 76-78.



\bibitem{Laurens}
\newblock T. Laurens,
\newblock \emph {Global Well-Posedness for \(H^{-1}(\mathbb {R})\) Perturbations of KdV with Exotic Spatial Asymptotics},
\newblock Comm. Math. Phys.  \textbf{397} (2023), 1387--1439.

\bibitem{MoSaTz1}
\newblock L. Molinet, J.-C. Saut and N. Tzvetkov,
\newblock \emph {Well-posedness and ill-posedness results for the KP-I equation},
\newblock  Duke  Math. J. \textbf{115} (2002), 353-384.

\bibitem{MoSaTz2} 
\newblock L. Molinet, J.-C. Saut, N. Tzvetkov, 
\newblock \emph{Global well-posedness for the KP-I equation}, 
Math. Ann. \textbf{324} (2002) 255–275. Correction: Math. Ann. \textbf{328} (2004) 707–710.


\bibitem{MoSaTz4}{
\newblock L. Molinet, J.-C. Saut and N. Tzvetkov,
\newblock \emph{Global well-posedness for the KP-I equation on the background of a non localized solution},
\newblock Comm. Math. Phys.  \textbf{272} (2007), no. 3,  775--810.}

\bibitem{MoSaTz5}{
\newblock L. Molinet, J.-C. Saut and N. Tzvetkov, 
\newblock \emph{Global well-posedness for the KP-II equation on the background of a non-localized solution}, 
\newblock Ann. I.H. P. (C)-Nonlinear Analysis \textbf{28} (2011), 653--676.}

\bibitem{MoTa}
\newblock L. Molinet and T. Tanaka,
\newblock \emph {Refined bilinear Strichartz estimates with application to the well-posedness of periodic generalized KdV type equations},
\newblock To appear in J. Hyperbolic Equ.

\bibitem{MosPil}
\newblock R. Mosincat and D. Pilod,
\newblock \emph {Unconditional uniqueness for the Benjamin-Ono equation},
\newblock To appear in Pure and Applied Analysis.

\bibitem{OhWu}
\newblock S. Oh and X. Wu, 
\newblock \emph{On $L^1$ endpoint Kato-Ponce inequality}, 
\newblock Math. Res. Lett. {\bf 27} (4) (2020), 1129--1163.

\bibitem{P21} 
\newblock J. Palacios,
 \newblock \emph {Local well-posedness for the gKdV equation on the background of a bounded function}, 
\newblock Rev. Math. Iber.   \textbf{39} (2023), no. 1, pp. 341-396.


\bibitem{Robert}
\newblock T. Robert,
\newblock \emph{Global well-posedness of partially periodic KP-I equation in the energy space and applications},
\newblock Annales de l'I.H.P. Analyse non linéaire \textbf{35} (2018) no. 7, 1773--1826. 

\bibitem {Saut}
\newblock J. C. Saut, 
\newblock \emph{ Remarks on the generalized Kadomtsev-Petviashvili equations}, 
\newblock Indiana Univ. Math. J., 
\newblock \textbf{42} (1993), 1017-1029.

\bibitem{T} 
\newblock  H. Takaoka, 
\newblock \emph{Well-posedness for the Kadomtsev–Petviashvili II equation}, \newblock Adv. Differential Equations \textbf{5} (10–12) (2000) 1421–1443.

\bibitem{TT} 
\newblock H Takaoka, N. Tzvetkov, 
\newblock \emph{On the local regularity of the Kadomtsev–Petviashvili-II equation}, 
\newblock Int. Math. Res. Not. \textbf{2} (2001) 77–114.

\bibitem {Tao04} 
\newblock T.~Tao, 
\newblock \emph{Global well-posedness of the Benjamin-Ono equation
 in $ H^1(\R) $},  
\newblock J. Hyperbolic Differ. Equ.  {\bf 1}  (2004),  27-49.

\bibitem{W} 
\newblock M.V. Wickerhauser, 
\newblock \emph{Inverse scattering for the heat equation
and evolutions in $(2+1)$ variables},
\newblock Comm. Math. Phys. {\bf 108} (1987), 67-89.

 \bibitem{Z} 
 \newblock A.A. Zaitsev, 
 \newblock \emph{Formation of stationary waves by superposition of solitons},
\newblock Sov. Phys. Dokl. {\bf 28} (9) (1983), 720-722.
\bibitem{Zhidkov} 
\newblock P. Zhidkov,
 \newblock \emph{ KdV and nonlinear Schr\"odinger equations :
Qualitative theory}, 
\newblock Lecture Notes in Mathematics 1756, Springer 2001.


\bibitem{Zh} 
\newblock X. Zhou, 
\newblock \emph{ Inverse scattering transform for the time dependent
Schr\"odinger equation with applications to the KP-I equation},
\newblock Comm. Math. Phys. \textbf{128} (1990) 551-564.





\end{thebibliography}
\end{document}